\def\section{\@startsection{section}{1}%
	\z@{.7\linespacing\@plus\linespacing}{.5\linespacing}%
	{\bfseries\normalfont\scshape
		\centering
}}
\def\@secnumfont{\bfseries}
\numberwithin{equation}{section}
\newtheorem{Thm}{Theorem}[section]
\newtheorem{Lem}{Lemma}[section]
\newtheorem{Pro}{Proposition}[section]
\newtheorem{Rem}{Remark}[section]
\newtheorem{Def}{Definition}[section]
\begin{document}
\title[Optimal Control of the Landau-Lifshitz-Gilbert Equation]
{Optimal Control of  the 2D Landau-Lifshitz-Gilbert Equation with \\ \vspace{.1in}  Control Energy in Effective Magnetic Field }
\author[Sidhartha Patnaik and  Sakthivel Kumarasamy ]{Sidhartha Patnaik and Sakthivel Kumarasamy$^*$}
\address{Department of Mathematics \\
	Indian Institute of Space Science and Technology (IIST) \\
	Trivandrum- 695 547, INDIA}
\email{sidharthpatnaik96@gmail.com, pktsakthi@gmail.com, sakthivel@iist.ac.in}
\curraddr{}
\begin{abstract}
		The optimal control of magnetization dynamics in a ferromagnetic sample at a microscopic scale is studied. The dynamics of this model is governed by the Landau-Lifshitz-Gilbert equation on a two-dimensional bounded domain with the external magnetic field (the control) applied through the effective field. We prove the global existence and uniqueness of a regular solution in $\mathbb S^2$ under a smallness condition on control and initial data. We establish the existence of optimal control and derive a first-order necessary optimality condition using the Fr\'echet derivative of the control-to-state operator and adjoint problem approach. 
\end{abstract}
\subjclass{ 35K20, 35Q56, 35Q60, 49J20}
\keywords{ Landau-Lifshitz-Gilbert equation, Magnetization dynamics, Optimal control, First-order optimality condition}
\thanks{$^*$Corresponding author: Sakthivel Kumarasamy}
\thanks{The work of the second author is supported by the National Board for Higher Mathematics, Govt. of India through the research grant No:02011/13/2022/R\&D-II/10206.}
\maketitle	

\section{Introduction}	
The model of magnetization dynamics representing energy interactions between magnetic materials and the effect of an applied external magnetic field on magnetization dynamics was obtained by L.D. Landau and E.M. Lifshitz (\cite{LLEL}). By introducing dissipation phenomenologically, T.L. Gilbert  (\cite{TLG})  modified the Landau-Lifshitz equation. The current paper discusses the optimal control of magnetization dynamics in ferromagnetic material governed by the Landau-Lifshitz-Gilbert (LLG) equation. The control problem has numerous physical applications, including magnetic sensors and data storage devices(\cite{JMH},\cite{Pr}). In these applications, it is of utter importance that we would precisely control the magnetization process with the help of some applied magnetic field. Another crucial application of magnetization dynamics in medical science is magnetic nanoparticle hyperthermia, which is a cancer treatment that involves induced heating of nanoparticles subjected to the tumor with the help of an alternating magnetic field (see, \cite{MHLD}).

Let $\Omega$ be a bounded smooth domain in $\mathbb{R}^2$ occupied by some ferromagnetic material. Suppose $M:\Omega\times[0,T]\to\mathbb R^3$ represents the magnetization vector field. Below the Curie temperature in ferromagnetic materials, the magnitude of magnetization stays constant throughout the domain, that is $|M|=M_s$, where $M_s$ is the saturated magnetization. The normalized magnetization $m=\frac{M}{M_s}$ belongs  to $\mathbb S^2,$ the unit sphere in $\mathbb R^3.$  For $(x,t)\in\Omega\times[0,T],$ the evolution of $m$ is described  by the LLG equation
$$m_t= \gamma m \times \mathcal{E}_{eff}(m)- \alpha\gamma m \times (m \times \mathcal{E}_{eff}(m)),$$
where $\times$ denotes the cross product in $\mathbb{R}^3,$  $\mathcal{E}_{eff}:\Omega\times[0,T]\to\mathbb R^3$ represents the effective field, $\alpha>0$ is called the Gilbert damping constant, and $\gamma$ denotes the gyromagnetic factor.  Further, the effective field is given by $\mathcal{E}_{eff}(m)=-\nabla_m \mathcal{E}(m)$, where the micromagnetism energy $\mathcal{E}$ governs various energy interactions within the ferromagnet specified by
$\mathcal{E}=\mathcal{E}_{ex}+\mathcal{E}_{an}+\mathcal{E}_{me}+\mathcal{E}_{d}+\mathcal{E}_{a},$
where $\mathcal{E}_{ex}$ is exchange energy, $\mathcal{E}_{an}$ is anisotropy energy, $\mathcal{E}_{me}$ is magnetoelastic energy, $\mathcal{E}_{d}$ is demagnetization field and $\mathcal{E}_{a}$ is the external magnetic field. For more details about energy interactions we are referring to \cite{WFB}. In this work, we have only considered the exchange energy and the external magnetic field.

In ferromagnetic materials, the individual atomic magnetic moments will attempt to align all neighboring atomic magnetic moments with themselves in the same direction due to exchange interaction. This deviation from their equilibrium state causes an addition in exchange energy. Hence, if we assume the external magnetic field to be the function $u:\Omega\times[0,T]\to\mathbb R^3$ and the energy field to be of a pure isotropic type, then the micromagnetism energy is given by
$$\mathcal{E}=\frac{1}{2} \int_\Omega |\nabla m|^2 dx- \int_\Omega u\cdot m\ dx.$$
If we consider the magnetic fields associated with these energies, then the effective field is given by
$\mathcal{E}_{eff}(m)=\Delta m+u.$
For a detailed summary of the model and physical meaning of the energies, we refer to \cite{MKAP}. 

In this paper,  we consider the optimal control problem of minimizing the  objective functional $\mathcal J:\mathcal M \times \mathcal{U}_{ad} \to \mathbb{R}^+$ defined as
\begin{eqnarray}\label{CF}
	\lefteqn{\mathcal J(m,u):= \frac{1}{2} \int_0^T\int_{\Omega} |m(x,t)-m_d(x,t)|^2 dx \ dt + \frac{1}{2} \int_{\Omega} |m(x,T) -m_\Omega(x)|^2\ dx }\nonumber\\
	&&\hspace{0.7in}+\frac{1}{2} \int_0^T\int_{\Omega} |u(x,t)|^2 \ dx \ dt+ \frac{1}{2} \int_0^T\int_{\Omega} | \nabla u(x,t)|^2 dx \ dt 
\end{eqnarray}
subject to the magnetization $m:\Omega\times[0,T]\to\mathbb R^3$ solves the following nonlinear Landau-Lifshitz-Gilbert equation with initial data $m_0$ and vanishing Neumann boundary condition:
\begin{equation}\label{NLP}
	\begin{cases}
		m_t=\gamma m \times (\Delta m +u)- \alpha\gamma m \times (m \times (\Delta m +u)),\ \ \ (x,t) \in \Omega_T:=\Omega\times (0,T], \\
		
		\frac{\partial m}{\partial \eta}=0, \ \ \ \ \ \ \ \ (x,t) \in \partial\Omega_T:= \partial \Omega\times [0,T],\\
		m(\cdot,0)=m_0 \ \ \text{in} \ \Omega,
	\end{cases}	
\end{equation}
where $\eta$ is the outward unit  normal vector to the boundary $\partial \Omega$ and $u$ is the external magnetic field. Here, we have assumed the desired evolutionary magnetic moment $m_d:\Omega\times[0,T]\to\mathbb R^3$ to be in $L^2(0,T;L^2(\Omega))$ and the final time target moment $m_{\Omega}:\Omega\to\mathbb R^3$ to be in $L^2(\Omega)$. Hereafter, for simplicity, we  set the parameters $\alpha=1$ and $\gamma=1.$   Further, throughout the paper, we  assume that the initial data $m_0:\Omega\to\mathbb R^3$ satisfies the following conditions
\begin{equation}\label{IC}
	m_0 \in H^2(\Omega),\ \ \frac{\partial m_0}{\partial \eta}=0 \ \ \text{on} \ \partial \Omega,\ \ |m_0|=1.	
\end{equation}

The control problem is motivated by the study of optimizing the switching processes in ferromagnets, the magnetic hard drive, where the external magnetic field serves as the control input responsible for the writing and reading phenomena(\cite{THL}). Mathematically, this can be interpreted as searching for an external magnetic field $\widetilde u$ and its corresponding magnetization vector field $\widetilde m$ such that the desired magnetization evolution $m_d$ and a target moment $m_\Omega$ can be attained with the least amount of control belonging to a suitable class of admissible external magnetic field, while the optimal pair of magnetic fields $(\widetilde m, \widetilde u)$ solves the LLG equation \eqref{NLP}.

In the absence of the external magnetic field $u(x,t),$ several works are available for the solvability of \eqref{NLP}. For example, the global existence of a weak solution for \eqref{NLP} and its non-uniqueness was proved in \cite{FAAS}. The authors \cite{BLGMCH} proved the global existence of a weak solution in a $m$-dimensional manifold and established a relation between harmonic maps and the solutions of the Landau-Lifshitz equation. 
For more results on a weak solution, one can also refer to \cite{ZJYW},\cite{AV}. The local time existence of a regular solution for a bounded domain of $\mathbb{R}^3$ was investigated in \cite{GCPF}, and  they also discussed the global existence and uniqueness of regular solutions under a smallness condition on the initial data in the 2D case. 
Apart from the literature on the well-posedness of \eqref{NLP} in the absence of an external magnetic field, very limited articles are available for the  control problems of \eqref{NLP}. The article \cite{FAKB} studies the optimal control type problems with the LLG equation as the state equation, and a necessary optimality system is derived when the  magnetization is constant in space, which eventually leads to an optimization problem constraint by an ordinary differential equation. The paper \cite{TDMKAP}, which is closely related to our work, discussed the optimal control of the 1D LLG equation and analyzed the numerical solution for this problem. Further, for the results related to controllability of the Landau-Lifshitz equation, we refer the readers to \cite{SAGCSLCP},\cite{GCSL2},\cite{AC}.

The main contributions of this paper are explained as follows. We proved the global solvability of the 2D LLG equation \eqref{NLP} with  space-time dependent external magnetic field, studied the optimal control of this problem and derived an optimality condition.

{\bf Global Solvability of \eqref{NLP}.} The local in-time existence of regular solution $m \in L^2(0,{\tilde T};H^3(\Omega))  \cap C([0,{\tilde T}];H^2(\Omega))$ both for 2D and 3D LLG equation with effective field $\mathcal E_{eff}(m)$ generated only by $m,$ and without  external magnetic field was proved in \cite{GCPF}.  Besides, for the 2D case with $\mathcal E_{eff}(m)=\Delta m,$ the authors studied the global existence and uniqueness of regular solutions under the smallness condition $\|\nabla m_0\|_{L^2(\Omega)}\leq \delta,$ where $\delta$ is sufficiently small. In the context of well-posedness of \eqref{NLP}, the current paper generalizes \cite{GCPF} to the case where the effective field $\mathcal E_{eff}(m)$ is modified by including the external magnetic field $u(x,t),$ which arises as a cross-product with the magnetic moment. By the method inspired in \cite{GCPF},  we first prove the local time existence of regular solutions of the 2D model \eqref{NLP} when $m_0$ satisfies \eqref{IC} and control  $u \in L^2(0,{T};H^1(\Omega)).$   It is known that the 3D Navier-Stokes equation admits a unique, strong solution  (\cite{PCCF}, Theorem 9.3) for small enough initial data and source term. In the spirit of \cite{GCPF,PCCF},  we extend the regular solution  for all time $t\in[0,T]$  under the assumption that the sum $\|\nabla m_0\|^2_{L^2(\Omega)} + \|u\|^2_{L^2(0,T;L^2(\Omega))} $ is sufficiently small and proven the uniqueness result. We give a detailed proof of these results (Theorem \ref{LOCAL} and Theorem \ref{GLOBAL}) to justify the necessary adaptation of the external magnetic field $u(x,t)$ occurring as a semilinear form in \eqref{NLP} and obtain optimal assumptions on the admissible class of external magnetic fields.   

{\bf Optimal Control of \eqref{CF}-\eqref{NLP}.} Numerical analysis of the optimal control of the 1D LLG equation was studied in \cite{TDMKAP} using the regular solutions of \eqref{NLP} without the conditions on the data and control. However, since the finite time blow-up of regular solutions may occur in higher dimensions even without the control  (see, \cite{BLGMCH}), the assumptions on the control and data are crucial for the 2D LLG equations. When the underlying state equation doesn't admit a (unique) strong solution for a general class of controls and data, various methods have been employed to tackle the optimal control problems associated with the model. In particular, the optimal control problem of the 3D Navier–Stokes equations was studied by different methods, for example, using the unique, strong solution obtained by a smallness condition on data and control \cite{BTK}, treating the state equation as constraint mixed by the state and control \cite{Wa}, and working with a cost functional involving the state variable belonging to $ L^8(0,{T};L^4(\Omega)),$ which is finite,  so that any weak solution becomes strong solution \cite{SSS,ECKC}. In this paper, by invoking the first method of taking the admissible class of bounded controls, we established the existence of optimal control of \eqref{CF}-\eqref{NLP}.   We derived the first-order necessary optimality condition to characterize the optimal control by the classical adjoint problem approach. This theorem requires a detailed proof of the Fr\'echet differentiability of the control-to-state operator, the solvability of the linearized system, and the adjoint system of \eqref{NLP}, which are proved with the aid of the unique regular solution of \eqref{NLP}. To the authors' knowledge, such a rigorous analysis of optimal control of the 2D LLG equation with control as the space-time-dependent external magnetic field has not been done.

The paper is organized as follows. In section 2, we have given the required function spaces and inequalities, formulated the control problem, and stated the main results. The local existence and uniqueness of a regular solution of system \eqref{NLP} are obtained in subsection \ref{SLEU}, and a global solution is proved in subsection \ref{SECGE}. Section \ref{SEOC} discusses the existence of optimal control. The existence and uniqueness of the linearized system, adjoint system, and the differentiability of the control-to-state operator are discussed in section \ref{SOC}. Finally, subsection \ref{SFOOC} is devoted to a first-order optimality condition.

\section{Function Spaces and Main Results}

\subsection{Function Spaces and Inequalities}\label{FS}

We state some of the basic cross-product properties without proof which is used throughout the paper. 
\begin{Lem}\label{CPP}
	Let $a,b$ and $c$ be three vectors of $\mathbb{R}^3$, then the following vector identities hold: $a\cdot(b \times c)=-(b \times a)\cdot c$,  $a \cdot (a \times b)=0$,  $a \times (b \times c)=(a \cdot c) b - (a \cdot b) c.$ Moreover, assume that $1 \leq r,s \leq \infty, \ (1/r)+(1/s)=1$ and $p\geq 1$, then if $f\in L^{pr}(\Omega)$ and $g\in L^{ps}(\Omega),$ we have
	$$\|f \times g\|_{L^p(\Omega)} \leq \|f\|_{L^{pr}(\Omega)} \|g\|_{L^{ps}(\Omega)}.$$
\end{Lem}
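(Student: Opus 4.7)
The plan is to treat this as two essentially independent and elementary tasks: verifying the three algebraic cross-product identities in $\mathbb{R}^3$, and then deducing the $L^p$-bound from a pointwise estimate together with H\"older's inequality.

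For the algebraic identities, my approach would be to work with the Cartesian formula $a \times b = (a_2 b_3 - a_3 b_2,\, a_3 b_1 - a_1 b_3,\, a_1 b_2 - a_2 b_1)$, or equivalently the determinant representation $a \cdot (b \times c) = \det(a,b,c)$ of the scalar triple product. The first identity, $a \cdot (b \times c) = -(b \times a) \cdot c$, then follows from the antisymmetry $b \times a = -(a \times b)$ combined with the cyclic invariance of the triple product. The second identity, $a \cdot (a \times b) = 0$, is immediate from the determinant interpretation (two equal rows give zero), or equivalently from the orthogonality of $a \times b$ to $a$. For the BAC--CAB rule $a \times (b \times c) = (a \cdot c)b - (a \cdot b)c$, I would simply expand both sides in Cartesian coordinates and check component by component; no subtlety is involved, only bookkeeping.

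For the $L^p$-inequality, the crucial observation is the pointwise estimate
\begin{equation*}
|f(x) \times g(x)| \leq |f(x)|\,|g(x)| \quad \text{for a.e. } x \in \Omega,
\end{equation*}
which is a consequence of Lagrange's identity $|f \times g|^2 = |f|^2 |g|^2 - (f \cdot g)^2$. Raising both sides to the $p$-th power, integrating over $\Omega$, and applying H\"older's inequality with conjugate exponents $r$ and $s$ to the integrand $|f|^p |g|^p$ yields
\begin{equation*}
\int_\Omega |f \times g|^p \, dx \leq \left( \int_\Omega |f|^{pr}\, dx \right)^{1/r} \left( \int_\Omega |g|^{ps}\, dx \right)^{1/s},
\end{equation*}
and taking $p$-th roots gives the stated bound.

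No substantive obstacle arises in either part; the lemma amounts to textbook multivariable calculus plus a routine invocation of H\"older's inequality. Its real role in the paper appears to be organizational, since these identities will be used repeatedly to manipulate the nonlinear terms $m \times (\Delta m + u)$ and $m \times (m \times (\Delta m + u))$ that appear on the right-hand side of the LLG equation, and the $L^p$-bound will repeatedly allow cross-product nonlinearities to be estimated by products of norms in the energy estimates for \eqref{NLP}.
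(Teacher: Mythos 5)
Your proof is correct; note, however, that the paper deliberately states Lemma \ref{CPP} without proof (``We state some of the basic cross-product properties without proof''), so there is no argument in the paper to compare against. The route you supply --- the determinant/antisymmetry verification of the algebraic identities, the pointwise bound $|f\times g|\leq |f|\,|g|$ from Lagrange's identity, and H\"older's inequality with exponents $r,s$ applied to $|f|^p|g|^p$ --- is exactly the standard argument the authors are implicitly relying on, and it handles the endpoint cases $r=\infty$ or $s=\infty$ with the usual interpretation of the $L^\infty$ norm.
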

The $L^2$ theory of Laplace operator with Neumann  boundary condition leads to the following inequality of norms that will be quite useful. 
\begin{Lem}[see, \cite{KW}]\label{EN}
	Let $\Omega$ be a bounded smooth domain in $\mathbb{R}^n$ and $k \in \mathbb{N}$. There exists a constant $C_{k,n}>0$ such that for all $m \in H^{k+2}(\Omega)$ and $\frac{\partial m}{\partial \eta}\big|_{\partial \Omega}=0,$ it holds that
	\begin{eqnarray}\label{ES1}
		\|m\|_{H^{k+2}(\Omega)} \leq C_{k,n} \left(\|m\|_{L^2(\Omega)}+ \|\Delta m\|_{H^k(\Omega)}\right).	
	\end{eqnarray}
\end{Lem}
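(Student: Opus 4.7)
The plan is to treat this as an application of the classical $H^s$-regularity theory for the Neumann Laplacian and proceed by induction on $k$, with the base case handled by $H^2$-elliptic regularity and the inductive step by bootstrapping with tangential difference quotients.

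For the base case $k=0$, I would rewrite the problem as $-\Delta m + m = f$ with $f := m - \Delta m \in L^2(\Omega)$ subject to $\partial m/\partial \eta = 0$ on $\partial \Omega$. The operator $(-\Delta + \mathrm{Id}, \partial_\eta)$ is a regular elliptic boundary value problem on the smooth bounded domain $\Omega$ (the Lopatinskii/complementing condition is trivially satisfied for the Neumann datum), so the Agmon--Douglis--Nirenberg estimates deliver
\[
\|m\|_{H^2(\Omega)} \leq C\|f\|_{L^2(\Omega)} \leq C\bigl(\|m\|_{L^2(\Omega)} + \|\Delta m\|_{L^2(\Omega)}\bigr).
\]
A more hands-on route uses a partition of unity: in interior balls the Calder\'on--Zygmund estimate bounds $\|D^2 m\|_{L^2}$ by $\|\Delta m\|_{L^2}$; near the boundary, after flattening $\partial\Omega$ via a local $C^\infty$-chart, one applies Nirenberg's tangential difference-quotient method to control mixed and pure tangential second derivatives, and then recovers the remaining pure normal second derivative algebraically from the identity $\Delta m = \sum_i \partial_i^2 m$.

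For the inductive step, I would assume the estimate at level $k$ and deduce it at level $k+1$. Given $m \in H^{k+3}(\Omega)$ with $\partial m/\partial\eta=0$, the interior part is routine: differentiate the equation and apply standard interior $H^{k+3}_{\mathrm{loc}}$-estimates. Near the boundary, I would localize, straighten, and take tangential difference quotients of order $k+1$ of the equation. Since tangential differentiation commutes with $\partial_\eta$, the Neumann condition persists, so either the base case or the lower-level inductive hypothesis controls all tangential derivatives of order $k+3$. The missing top-order pure normal derivative is then read off algebraically by solving $\Delta m = f$ for $\partial_n^{k+3} m$ in terms of $\partial_n^{k+1}\Delta m$ and already-controlled mixed derivatives. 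Summation over the partition of unity yields the desired estimate with constant $C_{k+1,n}$.

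The main obstacle is the boundary analysis in the inductive step: only the first normal derivative of $m$ vanishes on $\partial\Omega$, so higher normal derivatives do not inherit any homogeneous boundary condition and one cannot simply iterate the base case by plugging in $\Delta m$. The clean resolution is precisely Nirenberg's tangential difference-quotient trick combined with the algebraic extraction of the lone missing normal derivative from the PDE, all carried out in local charts whose existence uses the smoothness of $\partial\Omega$. Since the statement is quoted from \cite{KW}, I would not reproduce these technicalities, but I would flag precisely where the smoothness of $\partial\Omega$ and the homogeneity of the Neumann condition are used, so that the reader can trust the constant $C_{k,n}$ depends only on $k$, $n$, and $\Omega$.
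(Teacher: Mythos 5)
The paper itself offers no proof of this lemma: it is quoted directly from the literature (\cite{KW}), so there is nothing internal to compare your argument against. Your sketch is the standard and correct route to the estimate: the base case $k=0$ via $H^2$-regularity for the regular elliptic boundary value problem $(-\Delta+\mathrm{Id},\partial_\eta)$, and the inductive step via localization, boundary flattening, tangential difference quotients, and algebraic recovery of the top-order pure normal derivative from the equation. You also correctly identify the one genuine obstacle — that $\Delta m$ does not inherit a Neumann condition, so one cannot naively iterate the base case. Two small remarks. First, after flattening the boundary the conormal derivative becomes a variable-coefficient first-order operator, so tangential differentiation does not literally commute with it; one picks up commutator terms, but these are of lower order and are absorbed by the inductive hypothesis, so the argument survives. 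Second, the induction is actually optional: the Agmon--Douglis--Nirenberg theory you invoke for the base case already gives $\|m\|_{H^{s+2}(\Omega)}\leq C\left(\|(-\Delta+\mathrm{Id})m\|_{H^{s}(\Omega)}+\|m\|_{L^2(\Omega)}\right)$ at every order $s\geq 0$ for this boundary value problem, from which \eqref{ES1} follows in one step via the triangle inequality. Either way, your outline is a legitimate proof of the quoted estimate, with the constant depending only on $k$, $n$, and $\Omega$ as claimed.
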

The above lemma allows us to define a norm on $H^{k+2}(\Omega)$ as follows 
$$\|m\|_{H^{k+2}(\Omega)}:=\|m\|_{L^2(\Omega)}+\|\Delta m\|_{H^k(\Omega)}.$$
The following inequalities will be frequently used in the paper.
\begin{Pro}
	Let $\Omega$ be a regular bounded subset of  $\mathbb{R}^2$. There exists a constant $C>0$ depending on $\Omega$ such that for all $m \in H^2(\Omega)$ with $\frac{\partial m}{\partial \eta}\big|_{\partial\Omega}=0,$ we have
	\begin{eqnarray} 
		\|m\|_{L^\infty(\Omega)} &\leq& C\ \left(\|m\|^2_{L^2(\Omega)}+ \|\Delta m\|^2_{L^2(\Omega)}\right)^{\frac{1}{2}},\label{ES2}\\
		\|\nabla m\|_{L^s(\Omega)} &\leq& C\  \|\Delta m\|_{L^2(\Omega)}, \ \ \forall \ s \in [1,\infty),\label{ES3}\\
		\|D^2m\|_{L^2(\Omega)} &\leq& C\ \|\Delta m\|_{L^2(\Omega)}.\label{ES4}	
	\end{eqnarray}
	Moreover, for every $m \in H^3(\Omega)$ with $\frac{\partial m}{\partial \eta}\big|_{\partial\Omega}=0,$ we have
	\begin{eqnarray}
		\|\Delta m\|_{L^2(\Omega)} &\leq& C\ \|\nabla \Delta m\|_{L^2(\Omega)},\label{ES7}\\ 	
		\|D^3m\|_{L^2(\Omega)} &\leq& C\ \|\nabla \Delta m\|_{L^2(\Omega)},\label{ES10}\\
		\|D^2m\|_{L^3(\Omega)} &\leq& C\ \|\Delta m\|^\frac{1}{2}_{L^2(\Omega)} \|\nabla \Delta m\|^\frac{1}{2}_{L^2(\Omega)}.\label{ES9} 		
	\end{eqnarray}
\end{Pro}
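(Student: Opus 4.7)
The six inequalities are consequences of four standard tools combined carefully: the two-dimensional Sobolev embeddings ($H^2\hookrightarrow L^\infty$, $H^1\hookrightarrow L^s$ for every $s<\infty$), the Gagliardo--Nirenberg interpolation in dimension two, the Neumann elliptic regularity already contained in Lemma~\ref{EN}, and the key boundary observation that $\int_\Omega \Delta m\,dx=\int_{\partial\Omega}\partial m/\partial\eta\,d\sigma=0$ under the homogeneous Neumann condition, so the Poincar\'e--Wirtinger inequality applies directly to $\Delta m$. Every right-hand side in the statement is a bare higher-order seminorm, so the main job in each case is to re-absorb the lower-order remainders generated by these tools using the Neumann boundary condition.

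\textbf{The $H^2$-level bounds \eqref{ES2}--\eqref{ES4}.} First I would handle \eqref{ES2}: in two dimensions $H^2(\Omega)\hookrightarrow L^\infty(\Omega)$, and Lemma~\ref{EN} with $k=0$ controls $\|m\|_{H^2}$ by $\|m\|_{L^2}+\|\Delta m\|_{L^2}$, which is equivalent (by Cauchy--Schwarz) to the stated $(\|m\|_{L^2}^2+\|\Delta m\|_{L^2}^2)^{1/2}$. For \eqref{ES4}, Lemma~\ref{EN} gives $\|D^2 m\|_{L^2}\leq\|m\|_{H^2}\leq C(\|m\|_{L^2}+\|\Delta m\|_{L^2})$, and the integration-by-parts identity $\|\nabla m\|_{L^2}^2=-\int_\Omega m\cdot\Delta m\,dx$ (which uses the Neumann condition) lets me re-absorb $\|m\|_{L^2}$ into a multiple of $\|\Delta m\|_{L^2}$. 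For \eqref{ES3}, the 2D embedding $H^1(\Omega)\hookrightarrow L^s(\Omega)$ valid for every $s<\infty$ yields $\|\nabla m\|_{L^s}\leq C(\|\nabla m\|_{L^2}+\|D^2 m\|_{L^2})$, after which the same integration-by-parts identity and \eqref{ES4} close the bound.

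\textbf{The $H^3$-level bounds \eqref{ES7}--\eqref{ES9}.} The Neumann boundary condition forces $\int_\Omega\Delta m\,dx=0$, so Poincar\'e--Wirtinger applied to $\Delta m$ delivers \eqref{ES7}. For \eqref{ES10} I would invoke Lemma~\ref{EN} with $k=1$ to get $\|D^3 m\|_{L^2}\leq\|m\|_{H^3}\leq C(\|m\|_{L^2}+\|\Delta m\|_{L^2}+\|\nabla\Delta m\|_{L^2})$, then collapse the first two terms into $\|\nabla\Delta m\|_{L^2}$ using \eqref{ES7} and the $H^2$ estimates just established. Finally, for \eqref{ES9} I would combine Calder\'on--Zygmund, $\|D^2 m\|_{L^3}\leq C\|\Delta m\|_{L^3}$, with the 2D Gagliardo--Nirenberg inequality $\|\Delta m\|_{L^3}\leq C\|\Delta m\|_{L^2}^{2/3}\|\nabla\Delta m\|_{L^2}^{1/3}+C\|\Delta m\|_{L^2}$, and then use \eqref{ES7} on the lower powers of $\|\Delta m\|_{L^2}$ to rebalance the exponents into the symmetric form $\|\Delta m\|_{L^2}^{1/2}\|\nabla\Delta m\|_{L^2}^{1/2}$.

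\textbf{Main obstacle.} Routine though the individual ingredients are, the bookkeeping is the delicate part: because the stated right-hand sides contain no $\|m\|_{L^2}$ or mean-value correction, every single use of Lemma~\ref{EN}, Sobolev embedding, or Gagliardo--Nirenberg leaves a residual lower-order term that must be reabsorbed. This forces a repeated and explicit appeal to the Neumann condition in two distinct guises, namely the IBP identity for $\|\nabla m\|_{L^2}$ and the mean-zero property of $\Delta m$. The trickiest step is \eqref{ES9}, where the natural Gagliardo--Nirenberg exponents are $2/3$ and $1/3$, and recovering the paper's symmetric $1/2$--$1/2$ form requires using \eqref{ES7} to trade one factor of $\|\Delta m\|_{L^2}^{1/6}$ for $\|\nabla\Delta m\|_{L^2}^{1/6}$ (up to a constant depending on $\Omega$).
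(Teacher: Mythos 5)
Your overall architecture matches the paper's, but one step as written is false. For \eqref{ES4} you claim that the identity $\|\nabla m\|_{L^2}^2=-\int_\Omega m\cdot\Delta m\,dx$ lets you ``re-absorb $\|m\|_{L^2}$ into a multiple of $\|\Delta m\|_{L^2}$''; no such absorption is possible, since for a nonzero constant $m$ (which satisfies the Neumann condition) one has $\Delta m=0$ but $\|m\|_{L^2}\neq 0$. The same issue affects your derivation of \eqref{ES3}: the identity only gives $\|\nabla m\|_{L^2}^2\le \|m\|_{L^2}\|\Delta m\|_{L^2}$, which does not yield $\|\nabla m\|_{L^2}\le C\|\Delta m\|_{L^2}$. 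The inequalities \eqref{ES3} and \eqref{ES4} are nevertheless true because their left-hand sides are invariant under adding constants; the missing idea is to run your chain of estimates on $m-\widehat{m}$ with $\widehat{m}=\frac{1}{|\Omega|}\int_\Omega m\,dx$, so that Poincar\'e--Wirtinger gives $\|m-\widehat{m}\|_{L^2}\le C\|\nabla m\|_{L^2}$ and then $\|\nabla m\|_{L^2}^2=-\int_\Omega (m-\widehat{m})\cdot\Delta m\,dx\le C\|\nabla m\|_{L^2}\|\Delta m\|_{L^2}$ closes the loop. This is precisely what the paper does: it obtains $\|\nabla m\|_{L^2}\le C\|\Delta m\|_{L^2}$ from the spectral decomposition of $-\Delta+I$ (the same Poincar\'e mechanism) and then systematically replaces $m$ by $m-\widehat{m}$ in the elliptic estimates. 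You clearly have this device available --- you use the mean-zero property of $\Delta m$ correctly for \eqref{ES7} and flag the ``mean-value correction'' as the main obstacle --- but for \eqref{ES3}--\eqref{ES4} you invoke the wrong mechanism, so the argument as written does not go through.

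The rest is sound. Your treatment of \eqref{ES2} and \eqref{ES10} coincides with the paper's, and your proof of \eqref{ES7} by Poincar\'e--Wirtinger on $\Delta m$ is an equivalent alternative to the paper's integration by parts against $\nabla\Delta m$. For \eqref{ES9} you take a genuinely different route: Calder\'on--Zygmund in $L^3$ plus the two-dimensional Gagliardo--Nirenberg inequality with exponents $2/3$ and $1/3$, rebalanced to $1/2$ and $1/2$ via \eqref{ES7}. This works, with the caveat that the $L^3$ elliptic estimate also produces a lower-order term that must be removed by the same mean-zero reduction. The paper instead stays entirely inside $L^2$-based elliptic theory: it uses the fractional embedding $H^{1/2}(\Omega)\hookrightarrow L^3(\Omega)$ together with the interpolation $\|D^2m\|_{H^{1/2}(\Omega)}\le C\|D^2m\|_{L^2(\Omega)}^{1/2}\|D^2m\|_{H^1(\Omega)}^{1/2}$ and then applies \eqref{ES4} and \eqref{ES10}; this lands directly on the symmetric exponents and avoids invoking $L^p$ regularity for $p\neq 2$.
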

\begin{proof}
	The inequality \eqref{ES2} results from the estimate \eqref{ES1} and the embedding $H^2(\Omega) \hookrightarrow L^\infty(\Omega)$. By spectral decomposition 
	$$\|\nabla m\|^2_{L^2(\Omega)} = \sum_{n=0}^{\infty} (\rho_n-1)\ |\langle m, \xi_n \rangle|^2 \leq \ C\sum_{n=0}^{\infty} (\rho_n-1)^2\ |\langle m,\xi_n\rangle |^2=\ C\  \|\Delta m\|^2_{L^2(\Omega)},$$ 	
	where $\rho_n$ is the eigenvalues of the operator $-\Delta + I$ and $\xi_n$ is the corresponding orthonormal eigenfunctions in $L^2(\Omega).$ Now, consider the average map $\widehat{m}=\frac{1}{|\Omega|} \int_\Omega m\ dx$, then $\int_\Omega (m-\widehat{m}) dx=0.$ By virtue of the Poincar\'e inequality (see, Section 7.10.2, \cite{SS}), the embedding $H^1(\Omega)\hookrightarrow L^s(\Omega)$ for $s \in [1,\infty)$ and estimate \eqref{ES1}, we derive
	\begin{align*}
		\|\nabla m\|_{L^s(\Omega)} &= \|\nabla (m-\widehat{m})\|_{L^s(\Omega)} \leq C \ \|\nabla (m-\widehat{m})\|_{H^1(\Omega)}\\
		&\leq C\ \left( \|m-\widehat{m}\|_{L^2(\Omega)} + \|\Delta (m-\widehat{m})\|_{L^2(\Omega)} \right)\\
		&\leq	C\ \left( \|\nabla (m-\widehat{m})\|_{L^2(\Omega)} + \|\Delta (m-\widehat{m})\|_{L^2(\Omega)} \right)
		\leq C\ \|\Delta m\|_{L^2(\Omega)}
	\end{align*}	
	which yields the estimate \eqref{ES3}. The inequality \eqref{ES4} is a result of regularity of Laplacian operator (see, \cite{KW}). By doing an integration by parts and using estimate \eqref{ES3}, one can obtain \eqref{ES7}. Next, for estimate \eqref{ES10},  we infer from Lemma \ref{EN} that
	$$\|D^3m\|_{L^2(\Omega)} \leq C\ \left(\|m\|_{L^2(\Omega)}+\|\Delta m\|_{L^2(\Omega)}+\|\nabla \Delta m\|_{L^2(\Omega)}\right).$$
Replacing $m$ by $m-\widehat{m}$ in the above inequality, applying the Poincar\'e inequality as before and using the estimates \eqref{ES3} and \eqref{ES7}, we derive
	\begin{align*}
		\|D^3 (m-\widehat{m})\|_{L^2(\Omega)} &\leq C\ \left(\|m-\widehat{m}\|_{L^2(\Omega)}+\|\Delta (m-\widehat{m})\|_{L^2(\Omega)} + \|\nabla \Delta (m-\widehat{m})\|_{L^2(\Omega)}\right)\\
		&\leq C\ \left( \|\nabla (m-\widehat{m})\|_{L^2(\Omega)} + \|\Delta (m-\widehat{m})\|_{L^2(\Omega)}+ \|\nabla \Delta (m-\widehat{m})\|_{L^2(\Omega)} \right)\\
		&\leq	C\ \left( \|\Delta  (m-\widehat{m})\|_{L^2(\Omega)} + \|\nabla \Delta (m-\widehat{m})\|_{L^2(\Omega)} \right)
		\leq C\ \|\nabla \Delta (m-\widehat{m})\|_{L^2(\Omega)}.
	\end{align*}
	By applying the fractional embedding $H^{\frac{1}{2}}(\Omega) \hookrightarrow L^3(\Omega)$ (see \cite{NPV},  Theorem 6.7) and using the interpolation inequality (see \cite{SK}, Theorem 2.7.2), estimates \eqref{ES4} and \eqref{ES10}, we derive
\begin{align*}
	\|D^2m\|_{L^3(\Omega)} &\leq C\ \|D^2m\|_{H^{1/2}(\Omega)} \leq C\ \|D^2m\|^{1/2}_{L^2(\Omega)}\|D^2m\|^{1/2}_{H^1(\Omega)}\\
	&\leq C \left(\|D^2m\|_{L^2(\Omega)}+ \|D^2m\|^{1/2}_{L^2(\Omega)} \|D^3m\|^{1/2}_{L^2(\Omega)}\right)\\
	&\leq C\ \left(\|\Delta m\|_{L^2(\Omega)}+ \|\Delta m\|^{1/2}_{L^2(\Omega)} \|\nabla \Delta m\|^{1/2}_{L^2(\Omega)}\right)\leq C\ \|\Delta m\|^{1/2}_{L^2(\Omega)} \|\nabla \Delta m\|^{1/2}_{L^2(\Omega)}.
\end{align*}	
This completes the proof of the estimate \eqref{ES9}. Hence the proof.
\end{proof}

\begin{Pro}{(Gagliardo-Nirenberg interpolation inequality, see, \cite{LN})}	
	Let $\Omega$ be a bounded Lipschitz domain. Suppose $1 \leq p,q,r,s \leq \infty$ and $\alpha,\beta$ are non-negative integers and $\theta \in [0,1]$ are real numbers such that $\frac{\alpha}{\beta} \leq \theta \leq 1$	and
	$\frac{1}{p}=\frac{\alpha}{n}+\left(\frac{1}{q}-\frac{\beta}{n}\right)\theta + \frac{1-\theta}{r}$. Suppose furthermore that $v$ is a function in $L^r(\Omega)$ with $\beta^{th}$ weak derivative in $L^q(\Omega)$. Then there exists constants $C_1,C_2$ depending on $\Omega,\alpha,\beta,q,r,s,\theta$ such that 
	$$\|D^\alpha v\|_{L^p(\Omega)} \leq C_1\ \|D^\beta v\|^\theta_{L^q(\Omega)} \|v\|^{1-\theta}_{L^r(\Omega)} +C_2 \|v\|_{L^s(\Omega)}$$
	with the exception that if $1<q<\infty$ and $\beta-\alpha -n/q \in \mathbb{N}$, we must choose $\alpha/\beta \leq \theta <1$. 
\end{Pro}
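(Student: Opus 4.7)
This is the classical Gagliardo--Nirenberg inequality of Nirenberg, and for a paper of this scope it is invariably quoted from \cite{LN} rather than reproved. Were I to supply a proof, my plan would be to first establish the estimate on all of $\mathbb{R}^n$ (where the additive $C_2\|v\|_{L^s}$ term is unnecessary) and then transfer to the bounded Lipschitz setting via a standard extension operator.

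On $\mathbb{R}^n$ the exponent identity is forced by scaling: applying the desired bound to $v_\lambda(x)=v(\lambda x)$ and matching the powers of $\lambda$ coming out of $\|D^\alpha v_\lambda\|_{L^p}$, $\|D^\beta v_\lambda\|_{L^q}$, and $\|v_\lambda\|_{L^r}$ admits no possibility other than $\tfrac{1}{p}=\tfrac{\alpha}{n}+\bigl(\tfrac{1}{q}-\tfrac{\beta}{n}\bigr)\theta+\tfrac{1-\theta}{r}$. For the inequality itself, I would argue by interpolating between the Sobolev endpoint $\theta=1$, where the estimate reduces to $\|D^\alpha v\|_{L^p}\lesssim \|D^\beta v\|_{L^q}$ and follows from representing $D^\alpha v$ as a Riesz potential of $D^\beta v$ together with the Hardy--Littlewood--Sobolev inequality, and the scaling-invariant endpoint $\theta=\alpha/\beta$, which can be reached by elementary potential estimates. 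Real or complex interpolation (Marcinkiewicz or Riesz--Thorin) then delivers the intermediate bound.

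To move to the bounded Lipschitz domain $\Omega$, I would apply Stein's extension theorem to obtain $Ev\in W^{\beta,q}(\mathbb{R}^n)\cap L^r(\mathbb{R}^n)$ whose norms are controlled by $\|v\|_{W^{\beta,q}(\Omega)}+\|v\|_{L^r(\Omega)}$, apply the $\mathbb{R}^n$ estimate to $Ev$, restrict back to $\Omega$, and absorb the lower-order derivatives that appear in $\|Ev\|_{W^{\beta,q}}$ either into the principal interpolation term or into the additive remainder $C_2\|v\|_{L^s(\Omega)}$ via Young's inequality. The hard part is the exceptional case $1<q<\infty$ with $\beta-\alpha-n/q\in\mathbb{N}$: this is the borderline where $W^{\beta,q}$ just fails to embed into $L^\infty$ at $\theta=1$, so the endpoint is unreachable, a delicate Littlewood--Paley decomposition with a logarithmic loss is required, and this failure is precisely what forces the stated restriction $\alpha/\beta\leq \theta<1$. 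In the present paper it is sufficient, and expected, to invoke \cite{LN} directly.
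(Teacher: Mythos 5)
The paper gives no proof of this proposition at all: it is stated as a classical result and cited directly from \cite{LN}, exactly as you conclude is appropriate. Your supplementary sketch (scaling to force the exponent identity, endpoint cases $\theta=1$ and $\theta=\alpha/\beta$, extension to a bounded Lipschitz domain with the additive $\|v\|_{L^s(\Omega)}$ term absorbing lower-order contributions, and the borderline Sobolev-embedding failure behind the exceptional case) is a faithful outline of the standard argument, but none of it is required to match the paper.
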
	
	
	Choose $n=q=r=2$ and $\alpha=0$. Then some particular cases of the Gagliardo-Nirenberg inequality combined with estimates \eqref{ES3}, \eqref{ES4}, \eqref{ES7} and \eqref{ES10} are given as follows:
\begin{numcases}{}
	\|\nabla m\|_{L^4(\Omega)} \leq C\ \|\nabla m\|^{\frac{1}{2}}_{L^2(\Omega)} \|\Delta m\|^{\frac{1}{2}}_{L^2(\Omega)} \ \ \ \ \ \ \ \  \text{for}\ \  p=4,\ \  \beta=1,\label{ES5}	\\
	\|\nabla m\|_{L^6(\Omega)} \leq C\ \|\nabla m\|^{\frac{1}{3}}_{L^2(\Omega)} \|\Delta m\|^{\frac{2}{3}}_{L^2(\Omega)}\ \ \ \ \ \ \ \ \text{for}\ \  p=6,\ \ \beta=1,\label{ES6}\\
	\|\nabla m\|_{L^\infty(\Omega)} \leq C\ \|\nabla m\|^{\frac{1}{2}}_{L^2(\Omega)} \|\nabla \Delta m\|^{\frac{1}{2}}_{L^2(\Omega)}\ \ \ \ \ \text{for}\ \  p=\infty,\ \beta=2.\label{ES8}	
\end{numcases}

The following inequality will be employed to estimate the nonlinear terms. 
	If $u\in H^1(\Omega)$ and $v \in H^2(\Omega)$, then $uv\in H^1(\Omega),$ that is, there exists a constant $C>0$ such that the following holds:
	\begin{eqnarray}\label{AIN}
	\|uv\|_{H^1(\Omega)} \leq C\ \|u\|_{H^1(\Omega)}\|v\|_{H^2(\Omega)}.	
	\end{eqnarray}
The proof follows from the embeddings $H^2(\Omega)\hookrightarrow L^\infty(\Omega)$ and $H^1(\Omega)\hookrightarrow L^4(\Omega)$. Indeed, by the above embeddings, we have
	\begin{eqnarray*}
		\lefteqn{	\|uv\|_{L^2(\Omega)}+\|\nabla u\ v\|_{L^2(\Omega)}+\|u\ \nabla v\|_{L^2(\Omega)} }\\
		&& \leq \|u\|_{L^2(\Omega)}\|v\|_{L^\infty(\Omega)}+ \|\nabla u\|_{L^2(\Omega)}\|v\|_{L^\infty(\Omega)}+\|u\|_{L^4(\Omega)} \|\nabla v\|_{L^4(\Omega)}
		\leq C\ \|u\|_{H^1(\Omega)}\|v\|_{H^2(\Omega)}.
	\end{eqnarray*}
We also need the following classical comparison results. 	
\begin{Pro}{(Comparison Lemma)} \label{CLO}
	Let  $f:[0,T]\times \mathbb{R} \to \mathbb{R}$ be a continuous function  in $t$ and locally Lipschitz with respect to $\zeta$ such that $\zeta:[0,T] \to  \mathbb{R}$ satisfies $\zeta'(t)\leq f(t,\zeta)$, $\zeta(0)=\zeta_0$.
	Now, if $\eta:[0,\tilde{T})\to \mathbb{R}$ be a solution of $\eta'(t)=f(t,\eta)$ with initial condition $\eta(0)=\zeta_0$ for some $\tilde{T} \leq T$.	Then, $\zeta(t) \leq \eta(t), \ \ t\in[0,\tilde{T}).$
\end{Pro}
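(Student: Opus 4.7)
The plan is to argue by contradiction and reduce the problem to an application of Gr\"onwall's inequality on a well-chosen subinterval. Set $w(t):=\zeta(t)-\eta(t)$, which is continuous on $[0,\tilde T)$ with $w(0)=0$. The goal is to show $w(t)\leq 0$ throughout. Suppose, to the contrary, that $w(t^\ast)>0$ for some $t^\ast\in(0,\tilde T)$. By continuity, the set $\{t\in[0,t^\ast]:w(t)\leq 0\}$ is closed and contains $0$; let $t_0$ be its supremum. Then $w(t_0)=0$ and $w(t)>0$ for all $t\in(t_0,t^\ast]$. This is the interval on which I will derive a contradiction.

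On $[t_0,t^\ast]$ both $\zeta$ and $\eta$ are continuous, hence bounded, so their graphs are contained in a compact set on which $f$ admits a uniform Lipschitz constant $L>0$ in the second variable. Using the hypotheses $\zeta'(t)\leq f(t,\zeta(t))$ and $\eta'(t)=f(t,\eta(t))$, I obtain for a.e.\ $t\in(t_0,t^\ast]$
\[
w'(t)=\zeta'(t)-\eta'(t)\leq f(t,\zeta(t))-f(t,\eta(t))\leq L\,|\zeta(t)-\eta(t)|=L\,w(t),
\]
where the last equality uses $w>0$ on this interval. Since $w(t_0)=0$, Gr\"onwall's inequality yields $w(t)\leq w(t_0)\,e^{L(t-t_0)}=0$ on $(t_0,t^\ast]$, contradicting $w(t^\ast)>0$. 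Hence no such $t^\ast$ exists, and the claim $\zeta(t)\leq\eta(t)$ on $[0,\tilde T)$ follows.

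The main delicate point I would guard against is the regularity of $\zeta$: the inequality $\zeta'(t)\leq f(t,\zeta(t))$ should be interpreted either in the classical sense or in the absolutely continuous sense so that the chain rule used on $w$ is valid, which is the standing convention in this paper's applications. A clean alternative, should one prefer to avoid any regularity subtleties, is the perturbation argument in which one considers the solution $\eta_\varepsilon$ of $\eta_\varepsilon'=f(t,\eta_\varepsilon)+\varepsilon$ with $\eta_\varepsilon(0)=\zeta_0$, shows by a first-crossing argument that $\zeta(t)<\eta_\varepsilon(t)$ on any common interval of existence, and then passes to the limit $\varepsilon\to 0^+$ using continuous dependence of ODE solutions on parameters; either route delivers the same conclusion.
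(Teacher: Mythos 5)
The paper states this proposition as a classical result and gives no proof of its own, so there is nothing to compare against; your argument is the standard textbook proof and it is correct. The key steps all check out: the set $\{t\in[0,t^\ast]:w(t)\leq 0\}$ is closed and contains $0$, so its supremum $t_0$ satisfies $w(t_0)=0$ while $w>0$ on $(t_0,t^\ast]$; local Lipschitz continuity upgrades to a uniform constant $L$ on the compact set containing the graphs of $\zeta$ and $\eta$ over $[t_0,t^\ast]$; and the differential form of Gr\"onwall then forces $w\equiv 0$ there, contradicting $w(t^\ast)>0$. Your regularity caveat is also apt and harmless in context: in the paper's only application (the Galerkin estimate \eqref{MI}), $\zeta(t)=1+\|m_n(t)\|^2_{L^2(\Omega)}+\|\Delta m_n(t)\|^2_{L^2(\Omega)}$ is $C^1$ because $m_n$ solves a finite-dimensional ODE system, so the pointwise differential inequality and the chain rule on $w$ are legitimate. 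The perturbation alternative you sketch ($\eta_\varepsilon'=f(t,\eta_\varepsilon)+\varepsilon$ plus a first-crossing argument and $\varepsilon\to 0^+$) is likewise sound, and has the minor advantage of not requiring the Lipschitz hypothesis at all if one compares against the maximal solution instead.
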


\subsection{Main Results}\label{MR}
In order to prove the existence of regular solution to system \eqref{NLP}, inspired by the method in \cite{GCPF}, we will first show that the following equivalent problem  admits a regular solution and  show that solution satisfies $|m|=1.$  Indeed, taking  dot product of \eqref{NLP} with $m$ and applying the properties of cross product stated in Lemma \ref{FS}, we see that $m$ and $m_t$ are orthogonal in space and time, that is, we get the pointwise identity $(d/dt)|m(\cdot,t)|^2=0.$  This shows  that $|m(x,t)|^2=1,$ pointwise space and time, since $m_0 \in \mathbb{S}^2.$ Consequently, by expanding the cross product $m\times (m\times\Delta m)$ in \eqref{NLP} and for a regular solution $m,$ we can use the identity  
\begin{eqnarray}\label{VPI}
	 \Delta |m|^2=2(m\cdot \Delta m)+ 2|\nabla m|^2,
\end{eqnarray}
to  arrive at the equivalent system of \eqref{NLP}:
\begin{equation}\label{EP}
	\begin{cases}
		m_t- \Delta m= |\nabla m|^2m +  m \times \Delta m  + m \times u- m \times (m \times u), \ \ \ (x,t)\in \Omega_T,\\
		
		\frac{\partial m}{\partial \eta}=0, \ \ \ \ \ \ \ (x,t) \in  \partial\Omega_T,\\
		
		m(\cdot,0)=m_0 \ \ \text{in} \ \Omega.
	\end{cases}	
\end{equation}
Further, due to the critical nonlinearity in the LLG equation \eqref{EP}, it seems difficult to directly get the existence of global regular solution by imposing certain smallness condition on initial data and control as done in \cite{PCCF}.  So, instead of that we first prove a local in time existence and uniqueness  of regular solution and validate that $|m|=1.$ Then, by showing some norm boundedness of that local solution, we extend  such solutions to the entire time interval. 

The following are the two main theorems concerning the well-posedness of the problem \eqref{NLP}.
\begin{Thm}[Local Existence]\label{LOCAL}	
	Suppose $m_0$ satisfies \eqref{IC} and the control $u \in L^2(0,T;H^1(\Omega))$. Then there exist a time $T^*$ depending on the size of the initial data and control such that system \eqref{NLP} admits a unique regular solution $m \in L^2(0,\tilde{T};H^3(\Omega))  \cap C([0,\tilde{T}];H^2(\Omega))$ \  for every $\tilde{T} <T^*$.
\end{Thm}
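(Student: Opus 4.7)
The plan is to work with the equivalent semilinear system \eqref{EP} rather than \eqref{NLP}, so that the equation looks like a Neumann heat equation plus nonlinear forcing. For construction I would use a Galerkin scheme based on the eigenfunctions $\{\xi_n\}$ of $-\Delta+I$ with homogeneous Neumann data, projecting \eqref{EP} onto $\mathrm{span}\{\xi_1,\dots,\xi_n\}$ to obtain finite-dimensional approximants $m_n$ whose existence is guaranteed locally in time by Cauchy-Lipschitz. The control $u\in L^2(0,T;H^1(\Omega))$ enters only in a linear (in $u$) fashion through $m\times u$ and $m\times(m\times u)$, and by Lemma \ref{CPP} together with the product inequality \eqref{AIN} these two terms are controlled in $H^1(\Omega)$ by $\|u\|_{H^1(\Omega)}$ times powers of $\|m\|_{H^2(\Omega)}$.

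The decisive a priori bound is at the $H^2$ level. After the lower-order testings against $m_n$ and $-\Delta m_n$ (which give control in $L^\infty_tL^2_x\cap L^2_tH^2_x$), I would test \eqref{EP} against $\Delta^2 m_n$ and estimate each nonlinear term by combining Lemma \ref{CPP}, the Gagliardo-Nirenberg inequalities \eqref{ES5}, \eqref{ES6}, \eqref{ES8}, and the Laplacian bounds \eqref{ES3}, \eqref{ES4}, \eqref{ES9}, \eqref{ES10}. Absorbing every occurrence of $\|\nabla\Delta m_n\|_{L^2(\Omega)}$ on the right-hand side into the dissipation via Young's inequality, I expect a differential inequality of the form
\[
\frac{d}{dt}\|\Delta m_n\|_{L^2(\Omega)}^2 + \|\nabla\Delta m_n\|_{L^2(\Omega)}^2 \leq C\,\bigl(1+\|\Delta m_n\|_{L^2(\Omega)}^2\bigr)^{3}\bigl(1+\|u(\cdot,t)\|_{H^1(\Omega)}^{2}\bigr).
\]
Comparing with the scalar ODE $y'=C(1+y)^{3}(1+\|u(\cdot,t)\|_{H^1(\Omega)}^{2})$, $y(0)=\|\Delta m_0\|_{L^2(\Omega)}^{2}$, via Proposition \ref{CLO}, and using $u\in L^2(0,T;H^1(\Omega))$, produces a blow-up time $T^*>0$ depending only on $\|m_0\|_{H^2(\Omega)}$ and $\|u\|_{L^2(0,T;H^1(\Omega))}$. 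On any $[0,\tilde T]$ with $\tilde T<T^*$, the $m_n$ are therefore uniformly bounded in $L^\infty(0,\tilde T;H^2(\Omega))\cap L^2(0,\tilde T;H^3(\Omega))$ with $\partial_t m_n$ bounded in $L^2(0,\tilde T;L^2(\Omega))$, and an Aubin-Lions compactness argument produces a limit $m\in L^2(0,\tilde T;H^3(\Omega))\cap C([0,\tilde T];H^2(\Omega))$ solving \eqref{EP}.

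Next I would verify the sphere constraint $|m|=1$ almost everywhere, which reinstates \eqref{NLP}. Taking the $L^2$ inner product of \eqref{EP} with $m$, using $a\cdot(a\times b)=0$, and substituting \eqref{VPI}, the scalar $\phi:=|m|^2-1$ satisfies the linear parabolic problem $\phi_t-\Delta\phi=2|\nabla m|^2\phi$ with vanishing Neumann data and $\phi(0)=0$; since $\nabla m\in L^\infty(0,\tilde T;L^4(\Omega))$ by the $H^2$ bound and \eqref{ES3}, testing against $\phi$, applying the Ladyzhenskaya-type inequality $\|\phi\|_{L^4(\Omega)}^2\leq C\|\phi\|_{L^2(\Omega)}\|\nabla\phi\|_{L^2(\Omega)}$, and using Gronwall forces $\phi\equiv 0$. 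Uniqueness follows by the same energy philosophy: for two solutions $m^1,m^2$ with identical data, the difference $w=m^1-m^2$ obeys a linear equation whose coefficients involve $\nabla m^i$, $\Delta m^i$ and $u$; testing against $w$ and $-\Delta w$ and using \eqref{AIN} together with the $L^\infty_tH^2_x$ regularity of $m^1,m^2$ yields $\tfrac{d}{dt}\|w\|_{H^1(\Omega)}^2\leq C(t)\|w\|_{H^1(\Omega)}^2$ with an integrable $C(t)$, forcing $w\equiv 0$.

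The principal obstacle will be closing the $H^2$ a priori estimate, because the nonlinearities $|\nabla m|^2m$ and $m\times\Delta m$ are critical in two dimensions: every power of $\|\nabla\Delta m\|_{L^2(\Omega)}$ arising on the right-hand side must be cleanly split off by Young's inequality so that only the top-order dissipation absorbs the singular factor, leaving behind a merely polynomial dependence on $\|\Delta m\|_{L^2(\Omega)}$. It is precisely this super-linearity that prevents immediate globalization and forces the solution time $T^*$ to be characterized through the comparison Proposition \ref{CLO}; removing this obstruction under a smallness assumption on $\|\nabla m_0\|_{L^2(\Omega)}^2+\|u\|_{L^2(0,T;L^2(\Omega))}^2$ is what Theorem \ref{GLOBAL} will subsequently accomplish.
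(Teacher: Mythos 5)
Your proposal follows essentially the same route as the paper: Galerkin approximation on the eigenfunctions of $-\Delta+I$ applied to the equivalent system \eqref{EP}, an $H^2$-level a priori estimate obtained by testing with $\Delta^2 m_n$ and absorbing $\|\nabla\Delta m_n\|_{L^2(\Omega)}$ into the dissipation, a superlinear differential inequality closed by the comparison lemma to produce $T^*$, Aubin--Lions compactness, recovery of $|m|=1$ from the parabolic equation for $|m|^2-1$, and uniqueness by an energy estimate on the difference. (The paper's exponent in the comparison ODE is $5$ rather than your guessed $3$, and its coupled quantity is $1+\|m_n\|^2_{L^2(\Omega)}+\|\Delta m_n\|^2_{L^2(\Omega)}$ since $|m_n|=1$ is unavailable at the Galerkin level, so the lower-order estimates do not close independently of the $H^2$ one --- but neither point affects the argument.) The one place where your stated bounds fall short of the stated conclusion is the time derivative: you only claim $\partial_t m_n$ bounded in $L^2(0,\tilde T;L^2(\Omega))$, which together with $L^2(0,\tilde T;H^3(\Omega))$ yields continuity only into $H^{3/2}(\Omega)$ (plus weak continuity into $H^2(\Omega)$), not the asserted $m\in C([0,\tilde T];H^2(\Omega))$. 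The paper proves $\partial_t m_n$ bounded in $L^2(0,\tilde T;H^1(\Omega))$ --- exactly the purpose of the product estimate \eqref{AIN} --- and then invokes the interpolation $L^2(0,\tilde T;H^3)\cap H^1(0,\tilde T;H^1)\hookrightarrow C([0,\tilde T];H^2)$; your estimates already contain everything needed to upgrade to the $H^1$ bound, so this is an easily repaired omission rather than a structural flaw.
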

\begin{Thm}[Global Existence]\label{GLOBAL} Suppose the initial data $m_0$ satisfies \eqref{IC}, and in addition 
	assume that  $m_0$ and the control $u$ satisfies $\|\nabla m_0\|^2_{L^2(\Omega)} + \|u\|^2_{L^2(0,T;L^2(\Omega))} \leq \frac{1}{16C^*}$, where $C^*$ is the constant depends only on $\Omega$. Then the system \eqref{NLP} admits a unique regular solution $m \in L^2(0,T;H^3(\Omega))  \cap C([0,T];H^2(\Omega))$ on $[0,T]$.
		
	Moreover, there exists a constant $C(\Omega,T)>0$ such that the following estimate holds:
\begin{eqnarray}\label{SSE}
\lefteqn{\sup_{t \in [0,T]} \|m(t)\|^2_{H^2(\Omega)}+\|m\|^2_{L^2(0,T;H^3(\Omega))} + \|m_t\|^2_{L^2(0,T;H^1(\Omega))}} \nonumber\\
&&\leq  \exp\left\{C\left(1+\|\Delta m_0\|^2_{L^2(\Omega)}+\|u\|^2_{L^2(0,T;H^1(\Omega))}\right)^2\right\}. 
\end{eqnarray}

\end{Thm}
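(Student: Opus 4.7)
My plan is to take the local solution supplied by Theorem \ref{LOCAL}, derive \emph{a priori} $H^2$ estimates that remain valid throughout its maximal existence interval $[0,T^\ast)$, and then invoke the standard continuation argument: if $T^\ast\le T$ held, then $\|m(t)\|_{H^2}$ would have to blow up as $t\uparrow T^\ast$, contradicting the derived bound. Throughout I work with the equivalent reformulation \eqref{EP}, and exploit the pointwise identities $|m|=1$ (obtained by dotting \eqref{NLP} with $m$) and $m\cdot\Delta m=-|\nabla m|^2$ (from \eqref{VPI}), which are available for regular solutions.

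\textbf{Stage 1: $H^1$ estimate via a bootstrap.} I test \eqref{EP} with $-\Delta m$ in $L^2$. Integration by parts on the time-derivative term (using the Neumann boundary condition) produces $\tfrac12\tfrac{d}{dt}\|\nabla m\|_{L^2}^2$, the term $(m\times\Delta m)\cdot\Delta m$ vanishes by Lemma \ref{CPP}, and $-\int|\nabla m|^2\,m\cdot\Delta m\,dx$ becomes $\|\nabla m\|_{L^4}^4$ thanks to $m\cdot\Delta m=-|\nabla m|^2$. The resulting identity
\begin{equation*}
\tfrac12\tfrac{d}{dt}\|\nabla m\|_{L^2}^2+\|\Delta m\|_{L^2}^2=\|\nabla m\|_{L^4}^4-\int_{\Omega}\bigl(m\times u-m\times(m\times u)\bigr)\cdot\Delta m\,dx
\end{equation*}
is then handled by bounding the first right-hand term, via \eqref{ES5}, by $C^\ast\|\nabla m\|_{L^2}^2\|\Delta m\|_{L^2}^2$, and the $u$-terms (using $|m|=1$ and Young's inequality) by $\tfrac14\|\Delta m\|_{L^2}^2+C\|u\|_{L^2}^2$. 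A continuity (bootstrap) argument then shows that the smallness hypothesis $\|\nabla m_0\|_{L^2}^2+\|u\|_{L^2(0,T;L^2)}^2\le\tfrac{1}{16C^\ast}$ forces $\|\nabla m(t)\|_{L^2}^2\le\tfrac{1}{4C^\ast}$ on all of $[0,T^\ast)$, so that the nonlinear term is absorbed by $\tfrac14\|\Delta m\|_{L^2}^2$ on the left. Integrating in time then yields a uniform bound on $\sup_t\|\nabla m(t)\|_{L^2}^2+\int_0^T\|\Delta m\|_{L^2}^2\,dt$ in terms of the data.

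\textbf{Stage 2: $H^2$ and $H^3$ estimates.} I next test \eqref{EP} with $\Delta^2 m$ (equivalently, differentiate once and test with $\nabla\Delta m$, discarding boundary terms), obtaining
\begin{equation*}
\tfrac12\tfrac{d}{dt}\|\Delta m\|_{L^2}^2+\|\nabla\Delta m\|_{L^2}^2=\sum_j I_j,
\end{equation*}
where the $I_j$ collect the nonlinear contributions from $|\nabla m|^2 m$, $m\times\Delta m$, $m\times u$ and $m\times(m\times u)$. Using $|m|=1$, the interpolation inequalities \eqref{ES3}, \eqref{ES6}, \eqref{ES8}, \eqref{ES9} and the product bound \eqref{AIN}, I expect to estimate each $I_j$ by $\varepsilon\|\nabla\Delta m\|_{L^2}^2$ plus terms of the form $C\,g(t)\|\Delta m\|_{L^2}^2+C\|u\|_{H^1}^2$, where $g\in L^1(0,T)$ has $L^1$-norm controlled by the Stage 1 bound on $\|\Delta m\|_{L^2(L^2)}^2$. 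Choosing $\varepsilon$ small, absorbing into the left-hand $\|\nabla\Delta m\|_{L^2}^2$, and applying Gronwall's inequality then produces the exponential estimate \eqref{SSE} for $\sup_t\|\Delta m(t)\|_{L^2}^2+\int_0^T\|\nabla\Delta m\|_{L^2}^2\,dt$, which by \eqref{ES1} and \eqref{ES10} upgrades to the $H^2$ and $H^3$ parts of \eqref{SSE}. Finally, reading \eqref{EP} as $m_t=\Delta m+(\text{nonlinear terms})$ and applying \eqref{AIN} to the nonlinearities gives the $L^2(0,T;H^1)$-bound on $m_t$.

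\textbf{Main obstacle and uniqueness.} The principal difficulty lies in Stage 2: after differentiating \eqref{EP}, the nonlinearities $|\nabla m|^2 m$ and $m\times\Delta m$ produce products of $\nabla m$ and $D^2 m$ of critical scaling in two space dimensions, so a careful balance between \eqref{ES6}, \eqref{ES8}, \eqref{ES9} and Young's inequality is needed to make each resulting bound absorbable either by $\|\nabla\Delta m\|_{L^2}^2$ on the left or by the already-controlled $L^2(0,T;H^2)$-norm on the right. Uniqueness is then obtained by subtracting two candidate regular solutions $m_1,m_2$, testing the difference equation with $m_1-m_2$ (supplemented by an $\nabla(m_1-m_2)$ test if needed), using the $H^2$-regularity of each $m_i$ together with $|m_i|=1$ to bound the quadratic and cubic nonlinear differences, and closing by Gronwall's inequality.
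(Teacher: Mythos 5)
Your overall architecture (an a priori $H^1$ bound, then $H^2$/$H^3$ energy estimates closed by Gronwall using the integrability of $\|\Delta m\|^2_{L^2}$ from the first stage, continuation past $T^*$, and an $L^2$-Gronwall uniqueness argument) matches the paper's, and your Stage 2 is essentially the paper's computation. The genuine gap is in Stage 1. With your stated choices --- absorbing the control terms via $2\|u\|_{L^2}\|\Delta m\|_{L^2}\le\frac14\|\Delta m\|^2_{L^2}+4\|u\|^2_{L^2}$ and running the continuity argument at the threshold $\|\nabla m(t)\|^2_{L^2}\le\frac{1}{4C^*}$ --- the bootstrap does not close: on the set where the threshold holds you only recover
\[
\|\nabla m(t)\|^2_{L^2(\Omega)}\ \le\ \|\nabla m_0\|^2_{L^2(\Omega)}+8\,\|u\|^2_{L^2(0,T;L^2(\Omega))}\ \le\ \frac{8}{16C^*}=\frac{1}{2C^*},
\]
which is strictly weaker than the hypothesis $\frac{1}{4C^*}$, so the open-and-closed argument breaks down. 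The Young splitting and the threshold can be re-tuned so that a continuity argument closes under the hypothesis $\frac{1}{16C^*}$ while still leaving a positive multiple of $\|\Delta m\|^2_{L^2}$ on the left (which Stage 2 needs), but this must be exhibited explicitly; as written the step fails.

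The paper avoids the bootstrap entirely, and its device is the clean fix. It first tests the \emph{original} equation \eqref{NLP} (not \eqref{EP}) with $-\Delta m$: the precession term drops out, the damping term produces the dissipation $\int_\Omega|m\times\Delta m|^2\,dx$, and only the control terms survive on the right, yielding the \emph{unconditional} bound $\|\nabla m(t)\|^2_{L^2}\le 4\big(\|\nabla m_0\|^2_{L^2}+\|u\|^2_{L^2(0,T;L^2)}\big)$ on the whole existence interval with no smallness and no continuity argument. This bound is then substituted into exactly your Stage 1 identity for \eqref{EP}, and the smallness hypothesis is used once, to make the resulting coefficient $1-8C^*\big(\|\nabla m_0\|^2_{L^2}+\|u\|^2_{L^2(0,T;L^2)}\big)$ of $\|\Delta m\|^2_{L^2}$ at least $\frac12$. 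Two smaller points: (i) in Stage 2, testing with $\Delta^2 m$ (or ``discarding boundary terms'' after differentiating) is not justified for the limit solution, which only lies in $L^2(0,T;H^3(\Omega))$; the paper performs this estimate on the Galerkin approximants, where $\frac{\partial \Delta\xi_i}{\partial\eta}=0$ makes the integrations by parts legitimate, and passes to the limit by weak lower semicontinuity; (ii) the continuation step should be phrased as: the uniform $H^2$ bound on $[0,T^*)$ feeds back into the local existence time of Theorem \ref{LOCAL}, contradicting the maximality of $T^*<T$.
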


\noindent Before stating the existence of optimal solutions and first-order optimality conditions, let us define some function spaces and norms that we have used throughout this paper. The existence and uniqueness of the regular solution show that the norm of the distributed control can not be weaker than that required above. 

\begin{Def}
	Let $E_{mf}$ be a prescribed positive constant. The set of all admissible controls $\mathcal{U}_{ad}$ is defined as follows: 
$$\mathcal{U}_{ad}:= \left\{ u \in L^2(0,T;H^1(\Omega))\ \big| \  \|u\|^2_{L^2(0,T;L^2(\Omega))} \leq E_{mf} \right\}. $$
\end{Def}
From Theorem \ref{GLOBAL}, it is evident that the bound for the control parameter can be written exactly as $E_{mf}=\frac{1}{16C^*} - \|\nabla m_0\|^2_{L^2(\Omega)}$. For any fixed $E_{mf}$, the set $\mathcal{U}_{ad}$ is a bounded, convex and closed subset of the space $L^2(0,T;H^1(\Omega))$. The corresponding admissible solution space:
$$\mathcal{M}:=W^{1,2}(0,T;H^3(\Omega),H^1(\Omega))=\left\{ m \in L^2(0,T;H^3(\Omega)) \ | \  m_t \in L^2(0,T;H^1(\Omega)) \right\}.$$
Define the norms $$\|m\|_{\mathcal{M}}:=\|m\|_{L^2(0,T;H^3(\Omega))} + \|m_t\|_{L^2(0,T;H^1(\Omega))}\ \ \text{and}\ \ \|(m,u)\|_{\mathcal{M}\times L^2(0,T;H^1(\Omega))}=\|m\|_{\mathcal{M}}+\|u\|_{L^2(0,T;H^1(\Omega))}.$$
A pair $(m,u) \in \mathcal{M} \times \mathcal{U}_{ad}$ is called an admissible pair if $(m,u)$ satisfies \eqref{EP} and $\mathcal J(m,u) <+\infty$. Let us denote the set of all admissible pair as $\mathcal{A}.$
Also, an admissible pair $(\widetilde{m},\widetilde{u}) \in \mathcal{A}$ is called an optimal solution if  it minimizes the cost functional  $\mathcal J({m},{u})$, that is, $\displaystyle \mathcal J(\widetilde{m},\widetilde{u}) = \inf_{(m,u) \in \mathcal{A}} \mathcal J(m,u).$ 
Thus, the optimal control problem is stated as follows:
\begin{equation*}\label{OCP}
	\text{(OCP)}\begin{cases}
		\text{minimize}\ \mathcal J(m,u)\\
		(m,u) \in \mathcal{A}.
	\end{cases}	
\end{equation*}
In what follows, we state the existence of optimal control for (OCP).

\begin{Thm}\label{EOC}
	Suppose $m_0$ and control $u$ satisfies the assumptions of Theorem \ref{GLOBAL}. Then the optimal control problem (OCP) admits a solution $(\widetilde{m},\widetilde{u})$ such that $(\widetilde{m},\widetilde{u})\in\mathcal  M\times\mathcal U_{ad}$  is a regular solution pair of system \eqref{NLP} with initial data $m_0.$ 
\end{Thm}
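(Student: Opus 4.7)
The plan is to apply the direct method of the calculus of variations, using the global regularity estimate \eqref{SSE} of Theorem \ref{GLOBAL} as the source of compactness. First, $\mathcal A\neq\emptyset$: the choice $u\equiv 0\in\mathcal U_{ad}$ satisfies the smallness condition of Theorem \ref{GLOBAL} (the hypothesis on $\|\nabla m_0\|_{L^2(\Omega)}^2$ is already built into the definition of $E_{mf}$), producing a regular solution $m\in\mathcal M$ with $\mathcal J(m,0)<\infty$ thanks to $m_d\in L^2(0,T;L^2(\Omega))$ and $m_\Omega\in L^2(\Omega)$. Set $J^*:=\inf_{\mathcal A}\mathcal J\geq 0$ and pick a minimizing sequence $(m_n,u_n)\in\mathcal A$. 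The last two terms of $\mathcal J(m_n,u_n)$ bound $\|u_n\|_{L^2(0,T;H^1(\Omega))}$ uniformly, while admissibility gives $\|u_n\|^2_{L^2(0,T;L^2(\Omega))}\leq E_{mf}$; feeding this into \eqref{SSE} yields uniform bounds on $\{m_n\}$ in $\mathcal M$ and in $L^\infty(0,T;H^2(\Omega))$.

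By reflexivity, on a non-relabelled subsequence, $u_n\rightharpoonup\tilde u$ weakly in $L^2(0,T;H^1(\Omega))$ and $m_n\rightharpoonup\tilde m$ weakly in $\mathcal M$; weak lower semi-continuity of the $L^2(0,T;L^2(\Omega))$ norm gives $\tilde u\in\mathcal U_{ad}$. The compact embedding $H^3(\Omega)\hookrightarrow\hookrightarrow H^2(\Omega)\hookrightarrow H^1(\Omega)$ combined with the Aubin--Lions lemma yields $m_n\to\tilde m$ strongly in $L^2(0,T;H^2(\Omega))$ and, with the bound on $\partial_t m_n$, in $C([0,T];H^s(\Omega))$ for every $s<2$, hence in $C(\overline{\Omega}\times[0,T])$ by the 2D Sobolev embedding. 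In particular $|\tilde m|=1$ pointwise and $\tilde m(\cdot,0)=m_0$.

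The main obstacle is passing to the limit in the cross-product nonlinearities of \eqref{EP}. For $m_n\times\Delta m_n$ I would use the splitting $(m_n-\tilde m)\times\Delta m_n+\tilde m\times(\Delta m_n-\Delta\tilde m)$, whose first term tends to $0$ in $L^2(0,T;L^2(\Omega))$ via $\|m_n-\tilde m\|_{L^2(0,T;L^\infty(\Omega))}\to 0$ and boundedness of $\Delta m_n$ in $L^\infty(0,T;L^2(\Omega))$, while the second converges weakly to $0$ in $L^2(0,T;L^2(\Omega))$. For $|\nabla m_n|^2 m_n$, strong $L^2(0,T;H^1(\Omega))$ convergence of $\nabla m_n$ together with \eqref{ES5} gives $|\nabla m_n|^2\to|\nabla\tilde m|^2$ in $L^1(0,T;L^2(\Omega))$, multiplied by the uniform convergence of $m_n$. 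The semilinear terms $m_n\times u_n$ and $m_n\times(m_n\times u_n)$ are handled by pairing uniform convergence of $m_n$ with weak $L^2(\Omega_T)$ convergence of $u_n$. Consequently $(\tilde m,\tilde u)$ solves \eqref{EP} distributionally; by the attained regularity and \eqref{VPI} it is a regular solution of \eqref{NLP}, so $(\tilde m,\tilde u)\in\mathcal A$. Finally, the first two terms of $\mathcal J$ pass via the strong convergences and the last two are weakly lower semi-continuous as Hilbert-space norms squared, giving
\begin{equation*}
\mathcal J(\tilde m,\tilde u)\leq\liminf_{n\to\infty}\mathcal J(m_n,u_n)=J^*,
\end{equation*}
which combined with admissibility forces $\mathcal J(\tilde m,\tilde u)=J^*$, proving that $(\tilde m,\tilde u)$ is optimal.
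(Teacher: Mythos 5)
Your proposal is correct and follows essentially the same route as the paper: a minimizing sequence, uniform bounds from Theorem \ref{GLOBAL}, weak compactness plus Aubin--Lions--Simon, term-by-term passage to the limit in the nonlinearities (your splittings are the same ones used in the paper's Lemma \ref{lem2}), and weak lower semi-continuity of $\mathcal J$. The one small refinement worth noting is that you extract the uniform $L^2(0,T;H^1(\Omega))$ bound on $u_n$ from the coercivity of the last two terms of $\mathcal J$ along the minimizing sequence, which is cleaner than the paper's appeal to boundedness of $\mathcal U_{ad}$ in $L^2(0,T;H^1(\Omega))$ (a set whose defining constraint only controls the $L^2(0,T;L^2(\Omega))$ norm).
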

Finally,  we obtain the first-order necessary optimality condition given by  a variational inequality using the classical adjoint problem approach. We mainly follow the techniques used in \cite{FART},\cite{SSS},\cite{FT} to obtain the optimality condition.

We formally derive the adjoint system corresponding to the (OCP). Consider the formal Lagrangian defined by (see, \cite{FT}) 
$$\mathcal{L}(m,u,\phi)=\mathcal J(m,u)-\int_0^T (N(m,u),\phi)\ dt,$$
where  $\phi$ is the adjoint variable and $N(m,u)=	m_t- \Delta m- |\nabla m|^2m -  m \times \Delta m  - m \times u +  m \times (m \times u)$. If $(\widetilde{m},\widetilde{u})$ is an optimal solution, then $\frac{\partial \mathcal{L}}{\partial m} (\widetilde{m},\widetilde{u},\phi)\cdot m = 0$ for all $m$ satisfying $m(0)=0$. This leads to the following \textit{adjoint system}
\begin{equation}\label{AP}
	\begin{cases}
		-\phi_t -  \Delta \phi=   \Delta(\phi \times \widetilde{m})+  (\Delta \widetilde{m} \times \phi) + (\widetilde{u} \times \phi) - 2 \nabla \cdot \{(\widetilde{m}\cdot \phi)\nabla \widetilde{m}\}\\[1 mm]
		\hspace{0.9in} + |\nabla \widetilde{m}|^2 \phi +  (\phi \times \widetilde{m})\times \widetilde{u} +  \phi \times (\widetilde{m} \times \widetilde{u})+ (\widetilde{m}-m_d) \ \ \ \text{in}\ \Omega_T,\\[1 mm]
		\frac{\partial \phi}{\partial \eta}=0 \ \ \ \ \ \ \text{on} \ \partial \Omega_T,\\
		\phi(T)=\widetilde{m}(T)-m_\Omega \ \ \ \text{in} \ \ \Omega.
	\end{cases} 
\end{equation}
\begin{Thm}\label{FOOC} 	Suppose $m_0$ and control $u$ satisfies the assumptions of Theorem \ref{GLOBAL}.
	Let $\widetilde{u} \in \mathcal{U}_{ad}$ be an optimal control of (OCP) with associated state $\widetilde{m}.$ Then there exists a unique element $\phi \in W^{1,2}(0,T;H^1(\Omega),H^1(\Omega)^*)$ corresponding to the admissible pair $(\widetilde{m},\widetilde{u})$ such that the triplet $(\phi,\widetilde{m},\widetilde{u})$ satisfies the adjoint system \eqref{AP} weakly. Moreover, the following variational inequality holds:
	\begin{eqnarray*}
		\lefteqn{\int_{\Omega_T} \widetilde{u}\cdot (u-\widetilde{u})\ dx\ dt + \int_{\Omega_T} \nabla \widetilde{u}\cdot (\nabla u-\nabla \widetilde{u})\ dx\ dt}\\
		&&+\int_{\Omega_T} \Big((\phi \times \widetilde{m})+  \widetilde{m} \times (\phi \times \widetilde{m})\Big)\cdot (u-\widetilde{u})\ dx\ dt \geq 0, \ \forall \ u \ \in \mathcal{U}_{ad}.
	\end{eqnarray*}
\end{Thm}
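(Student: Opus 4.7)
The plan follows the classical Lagrangian/adjoint route. Convexity of $\mathcal{U}_{ad}$ implies that $\widetilde{u}+\lambda(u-\widetilde{u})$ is admissible for every $u \in \mathcal{U}_{ad}$ and $\lambda \in [0,1]$. Writing $v:=u-\widetilde{u}$ and $z$ for the sensitivity of the state at $\widetilde{u}$ in the direction $v$, I would first use the Fr\'echet differentiability of the control-to-state operator (established in Section~\ref{SOC}) and the optimality of $\widetilde{u}$ to pass to the limit $\lambda\to 0^+$ in the quotient $[\mathcal{J}(m_\lambda,\widetilde{u}+\lambda v)-\mathcal{J}(\widetilde{m},\widetilde{u})]/\lambda \geq 0$, obtaining
\[
\int_{\Omega_T}(\widetilde{m}-m_d)\cdot z\,dx\,dt + \int_\Omega(\widetilde{m}(T)-m_\Omega)\cdot z(T)\,dx + \int_{\Omega_T}\widetilde{u}\cdot v\,dx\,dt + \int_{\Omega_T}\nabla\widetilde{u}\cdot\nabla v\,dx\,dt \geq 0.
\]
Here $z$ solves the linearization of \eqref{EP} around $(\widetilde{m},\widetilde{u})$, whose $v$-dependent source is $\widetilde{m}\times v - \widetilde{m}\times(\widetilde{m}\times v)$. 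The remaining task is to eliminate $z$ in favour of $v$ by duality.

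I would then establish existence and uniqueness of a weak solution $\phi \in W^{1,2}(0,T;H^1(\Omega),H^1(\Omega)^*)$ of the backward adjoint system \eqref{AP}. The time-reversal $\tau=T-t$ converts it to a forward linear parabolic problem whose coefficients are built from the regular state $\widetilde{m}$, which by Theorem~\ref{GLOBAL} lies in $L^\infty(0,T;H^2(\Omega))\cap L^2(0,T;H^3(\Omega))$ and satisfies $|\widetilde{m}|=1$. A Galerkin approximation in the Neumann Laplacian eigenbasis, combined with a priori estimates closed by the product inequality \eqref{AIN}, the Gagliardo-Nirenberg bounds collected in Section~\ref{FS}, and Gr\"onwall's lemma, would provide a weak solution in the stated space; uniqueness follows from linearity.

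With $\phi$ at hand, I would pair the linearized equation for $z$ with $\phi$ and \eqref{AP} with $z$, integrate by parts in space and time, and exploit the Neumann boundary condition, the terminal condition $\phi(T)=\widetilde{m}(T)-m_\Omega$, the initial condition $z(0)=0$, and the triple-product identities of Lemma~\ref{CPP}. By the very construction of the adjoint, every term containing $z$ cancels; the linearized forcing $\widetilde{m}\times v-\widetilde{m}\times(\widetilde{m}\times v)$ tested against $\phi$ transforms into $\big((\phi\times\widetilde{m})+\widetilde{m}\times(\phi\times\widetilde{m})\big)\cdot v$ via the identities $\phi\cdot(\widetilde{m}\times v)=(\phi\times\widetilde{m})\cdot v$, $\phi\cdot(\widetilde{m}\times(\widetilde{m}\times v))=((\phi\times\widetilde{m})\times\widetilde{m})\cdot v$, and $(\phi\times\widetilde{m})\times\widetilde{m}=-\widetilde{m}\times(\phi\times\widetilde{m})$. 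Substituting this identity into the first-order inequality yields the claimed variational inequality.

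The main obstacle will be the weak well-posedness of \eqref{AP}. Its principal part contains $\Delta(\phi\times\widetilde{m})=\Delta\phi\times\widetilde{m}+2\sum_{j=1}^{2}\partial_j\phi\times\partial_j\widetilde{m}+\phi\times\Delta\widetilde{m}$, so the leading operator is a rotated Laplacian with coefficients driven by $\widetilde{m}$. Its coercivity in $H^1$ hinges on the pointwise identity $|\widetilde{m}|=1$ from \eqref{IC} and on the cyclic relation $(\Delta\phi\times\widetilde{m})\cdot\phi=\Delta\phi\cdot(\widetilde{m}\times\phi)$, after which a further integration by parts reduces the awkward cross term to lower-order contributions absorbable via \eqref{ES8} and \eqref{AIN} modulo a Gr\"onwall factor. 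The divergence-structure coupling $-2\nabla\cdot\{(\widetilde{m}\cdot\phi)\nabla\widetilde{m}\}$ must be kept in its weak form to avoid differentiating $\nabla\widetilde{m}$, while the zeroth-order coefficients $\Delta\widetilde{m}\times\phi$, $|\nabla\widetilde{m}|^2\phi$, $\widetilde{u}\times\phi$, $(\phi\times\widetilde{m})\times\widetilde{u}$, and $\phi\times(\widetilde{m}\times\widetilde{u})$ are handled by the interpolation estimates \eqref{ES5}--\eqref{ES9} together with the regularity of $\widetilde{m}$ supplied by Theorem~\ref{GLOBAL}. Once these estimates are in place, the integration-by-parts computation linking the linearized and adjoint systems is routine.
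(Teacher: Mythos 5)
Your proposal is correct and follows essentially the same route as the paper: the reduced functional $\mathcal{I}(u)=\mathcal{J}(G(u),u)$ with the first-order inequality $D_u\mathcal{I}(\widetilde u)(u-\widetilde u)\ge 0$, the characterization of $z=D_uG(\widetilde u)\cdot(u-\widetilde u)$ as the solution of the linearized system, a Galerkin construction of the weak adjoint state $\phi$ (the paper works directly on the backward problem integrating from $t$ to $T$ rather than reversing time, and the principal term is tamed in weak form via $(\nabla\phi\times\widetilde m)\cdot\nabla\phi=0$ rather than your strong-form cyclic identity, but the mechanism is the same), and the duality pairing with the triple-product identities of Lemma \ref{CPP} yielding $\big((\phi\times\widetilde m)+\widetilde m\times(\phi\times\widetilde m)\big)\cdot(u-\widetilde u)$. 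No substantive deviation or gap.
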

The proof of Theorems \ref{LOCAL}-\ref{FOOC} are given in Sections 3,4 and 5. 

\section {Local and Global Solvability of the Direct Problem}
\subsection{Local Time Existence and Uniqueness of Smooth Solutions}\label{SLEU}

By the method of Galerkin approximation, we first solve the equivalent system \eqref{EP} instead of \eqref{NLP} since \eqref{EP} has the advantage of having an elliptic part to derive appropriate priori estimates. We further show that any regular solution $m \in L^2(0,T;H^3(\Omega))$ of \eqref{EP} also satisfies $|m|=1$, which will in turn solve the system \eqref{NLP}.  The proof mainly follows the strategy used in \cite{GCPF}. Since the authors (\cite{GCPF})  mainly proved the local time existence of regular solutions for 3D LLG  equation with general effective field $\mathcal E_{eff}$ and without  external magnetic field $u,$ we completely prove this theorem for 2D LLG equation with specific  $u$ in semilinear form and obtain precise estimates in terms of $u$ and data $m_0.$  
\begin{proof}[Proof of Theorem \ref{LOCAL}]\ 
The proof is divided into four steps. \\	\noindent Step 1: \textit{Galerkin approximation and priori estimates}
	
	Let $\xi_i$ be the $i^{th}$ eigenfunction corresponding to the eigenvalue $\rho_i$ of the operator $-\Delta + I$ with vanishing Neumann boundary condition, that is, $(-\Delta +I) \xi_i = \rho_i \xi_i$ with $\frac{\partial \xi_i}{\partial \eta}\big|_{\partial \Omega}=0$ such that $\left\{\xi_i\right\}^{\infty}_{i=1}$ is the orthonormal basis of $L^2(\Omega)$ and orthogonal basis of $H^1(\Omega)$ and $H^2(\Omega)$. Let $W_n:=span\{\xi_1,\xi_2,...,\xi_n\}$ be the finite dimensional subspace of $L^2(\Omega)$ and $\mathbb{P}_n:L^2(\Omega) \to W_n$ be the orthogonal projection. We consider the Galerkin system
	\begin{equation}\label{GA}
		\begin{cases}
			(m_n)_t- \Delta m_n=\mathbb{P}_n \left[ |\nabla m_n|^2m_n +  m_n \times \Delta m_n + m_n \times u- m_n \times (m_n \times u)\right],\\
			m_n(0)=\mathbb{P}_n(m_0)
		\end{cases}
	\end{equation}
	where $m_n(t)=\sum_{k=1}^{n}a_{kn}(t)\ \xi_k\in W_n$ and $\mathbb{P}_n(m_0)=\sum_{k=1}^n b_{kn} \ \xi_k$. Then system \eqref{GA} is equivalent to the following system of ordinary differential equations
	\begin{equation}\label{ODE}  
		\frac{d}{dt}a_{kn}(t)= F_k(t,a_{n},u), \ \ a_{k}(0)=b_{k}, \ \ \ \ \ k=1,2,...,n,
	\end{equation}
		where $a_{n}=(a_{1n}(t),a_{2n}(t),...,a_{nn}(t))^T$ and 
	\begin{align*}
			F_k(t,a_{n},u) =& -(\rho_k-1)\ a_{kn}(t) + \sum_{q,r,s=1}^{n} \int_\Omega \bigg[ \xi_q \ (\nabla \xi_r \cdot \nabla \xi_s) \ \xi_k \bigg]\ dx \ a_{qn}(t) \ a_{rn}(t)\ a_{sn}(t)\\
			&- \sum_{r,s=1}^{n} (\rho_s-1) \int_\Omega \xi_r \ \xi_s \ \xi_k\ dx\ \big(a_{rn}(t)\times a_{sn}(t)\big) +  \sum_{r=1}^{n} \int_\Omega \big(a_{rn}(t)\times u\big)\ \xi_r\  \xi_k\  dx\\
			&- \sum_{r,s=1}^{n} \int_\Omega \big[a_{rn}(t) \times (a_{sn}(t) \times u)\big]\ \xi_r \ \xi_s \ \xi_k\ dx.	
	\end{align*}	
	Since $C([0,T];H^1(\Omega))$ is densely embedded in $L^2(0,T;H^1(\Omega))$, assume that $u \in C([0,T];H^1(\Omega))$. Then, $F_k(t,a_n,u)$ is a continuous function of $(t,a_n)$ in $[0,T]\times \mathbb{R}^{3n}$. Therefore, by the existence theory of ODE (see, \cite{PH}), there exists a solution $a_n\in C^1([0,t_m];\mathbb{R}^{3n})$, where $t_m$ is the maximal time of existence, that is, if $t_m<T$ then $|a_n(t)|$ tends to $+\infty$ as $t \to t_m$. However, we will show by an appropriate \emph{a priori} estimates that this is not the case.

	Taking $L^2$ inner product of equation \eqref{GA} with $m_n$, using $a\cdot(a\times b)=0$, applying the continuous embedding $H^2(\Omega) \hookrightarrow  L^\infty(\Omega)$, the equality of norms \eqref{ES1} and \eqref{ES3}, we obtain
	\begin{eqnarray}
		\lefteqn{\frac{1}{2} \frac{d}{dt} \|m_n(t)\|^2_{L^2(\Omega)} + \|\nabla m_n(t)\|^2_{L^2(\Omega)} = \int_\Omega |m_n(t)|^2 \ |\nabla m_n(t)|^2 dx} \nonumber\\
		&&\leq   \ \|m_n(t)\|^2_{L^\infty(\Omega)} \ \| \nabla m_n(t)\|^2_{L^2(\Omega)}\leq C(\Omega) \Big(\|m_n(t)\|^2_{L^2(\Omega)}+\|\Delta m_n(t)\|^2_{L^2(\Omega)}\Big)^2\label{L2}.
	\end{eqnarray}
	By hitting  equation \eqref{GA} with $\Delta ^2 m_n$ and doing integration by parts to each term on the right hand side, we get
	\begin{eqnarray}
	\lefteqn{\frac{1}{2}\frac{d}{dt} \| \Delta m_n(t)\|^2_{L^2(\Omega)} + \int_\Omega |\nabla  \Delta m_n(t)|^2\ dx}\nonumber\\
	&=&- \int_\Omega  \nabla (|\nabla m_n|^2m_n) \cdot \nabla \Delta m_n\ dx - \int_\Omega  \nabla \left(m_n \times \Delta m_n\right) \cdot \nabla \Delta m_n\ dx\nonumber\\
	&&- \int_\Omega  \nabla(m_n \times u) \cdot \nabla\Delta m_n\ dx + \int_\Omega  \nabla\big(m_n \times (m_n \times u)\big) \cdot \nabla\Delta m_n\ dx := \sum_{i=1}^4 E_i.\label{SOE}	
	\end{eqnarray}
	Let us estimate the bounds for each term separately. By applying H\"older's inequality, then using estimates \eqref{ES2},\eqref{ES3} and \eqref{ES9} followed by Cauchy's inequality, we derive
	\begin{flalign*}
		E_1 &= - \int_\Omega \left[2\nabla m_n(t) D^2m_n(t)\ m_n(t) \nabla \Delta m_n(t)+ |\nabla m_n(t)|^2 \nabla m_n(t)\ \nabla \Delta m_n(t)\right]\ dx&\\
		&\leq 2\  \|m_n(t)\|_{L^\infty(\Omega)} \|D^2m_n(t)\|_{L^3(\Omega)} \|\nabla m_n(t)\|_{L^6(\Omega)} \|\nabla \Delta m_n(t)\|_{L^2(\Omega)}+\  \|\nabla m_n(t)\|^3_{L^6(\Omega)} \|\nabla \Delta m_n(t)\|_{L^2(\Omega)}\\
		&\leq C\   \left(\|m_n(t)\|^2_{L^2(\Omega)}+\|\Delta m_n(t)\|^2_{L^2(\Omega)} \right)^{\frac{1}{2}}  \|\Delta m_n(t)\|^{\frac{3}{2}}_{L^2(\Omega)} \|\nabla \Delta m_n(t)\|^{\frac{3}{2}}_{L^2(\Omega)}\\
		&\ \ \ \ \  +C\    \|\Delta m_n(t)\|^{3}_{L^2(\Omega)} \|\nabla \Delta m_n(t)\|_{L^2(\Omega)}\\
		&\leq \epsilon \ \int_{\Omega}|\nabla \Delta m_n(t)|^2dx+ C(\epsilon) \left\{ \left(\|m_n(t)\|^2_{L^2(\Omega)}+\|\Delta m_n(t)\|^2_{L^2(\Omega)} \right)^5 +  \left(\|m_n(t)\|^2_{L^2(\Omega)}+\|\Delta m_n(t)\|^2_{L^2(\Omega)} \right)^3\right\}.
	\end{flalign*}
By virtue of $\nabla \Delta m_n \cdot (m_n \times \nabla \Delta m_n)=0$, the estimates \eqref{ES3} and \eqref{ES9}, one can get
\begin{flalign*}
	E_2 &= - \int_\Omega \big(\nabla m_n(t) \times \Delta m_n(t)\big)\ \nabla \Delta m_n(t)\ dx&\\
	&\leq \ \|\nabla m_n(t)\|_{L^6(\Omega)} \|\Delta m_n(t)\|_{L^3(\Omega)} \|\nabla \Delta m_n(t)\|_{L^2(\Omega)}\\
	&\leq C   \|\Delta m_n(t)\|^{\frac{3}{2}}_{L^2(\Omega)} \|\nabla \Delta m_n(t)\|^{\frac{3}{2}}_{L^2(\Omega)}\\
	&\leq  \epsilon \  \int_{\Omega}|\nabla \Delta m_n(t)|^2dx+ C(\epsilon) \  \left(\|m_n(t)\|^2_{L^2(\Omega)}+\|\Delta m_n(t)\|^2_{L^2(\Omega)} \right)^3.
\end{flalign*}
The inequalities \eqref{ES2},\eqref{ES3} and the embedding $H^1(\Omega)\hookrightarrow L^4(\Omega)$ lead to the estimates
\begin{flalign*}
	E_3 &=  - \int_\Omega \bigg[ \Big(\nabla m_n(t) \times u(t)\big) \nabla \Delta m_n(t)+\big(m_n(t) \times \nabla u(t)\big) \nabla \Delta m_n(t)\bigg]\ dx&\\
	&\leq \  \|\nabla m_n(t)\|_{L^4(\Omega)} \|u(t)\|_{L^4(\Omega)} \|\nabla \Delta m_n(t)\|_{L^2(\Omega)} + \  \|m_n(t)\|_{L^\infty(\Omega)} \|\nabla u(t)\|_{L^2(\Omega)} \|\nabla \Delta m_n(t)\|_{L^2(\Omega)}\\
	&\leq \epsilon   \ \int_{\Omega}|\nabla \Delta m_n(t)|^2dx+ C(\epsilon) \ \|u(t)\|^2_{H^1(\Omega)} \left(\|m_n(t)\|^2_{L^2(\Omega)}+\|\Delta m_n(t)\|^2_{L^2(\Omega)} \right)
\end{flalign*}
and
\begin{flalign*}
	E_4&=  \int_\Omega \bigg[ \Big(\nabla m_n(t) \times (m_n(t) \times u(t)) + m_n(t) \times (\nabla m_n(t) \times u)+m_n(t) \times (m_n(t) \times \nabla u(t))\Big) \bigg]  \nabla \Delta m_n(t)\ dx&\\
	&\leq 2\  \|\nabla m_n(t)\|_{L^4(\Omega)} \|m_n(t)\|_{L^\infty(\Omega)} \|u(t)\|_{L^4(\Omega)} \|\nabla \Delta m_n(t)\|_{L^2(\Omega)}\\
	&\ \ \ \ \ +  \  \|m_n(t)\|^2_{L^\infty(\Omega)} \|\nabla u(t)\|_{L^2(\Omega)} \|\nabla \Delta m_n(t)\|_{L^2(\Omega)}\\
	&\leq \epsilon \ \int_{\Omega}|\nabla \Delta m_n(t)|^2dx+ C(\epsilon)\ \|u(t)\|^2_{H^1(\Omega)}  \left(\|m_n(t)\|^2_{L^2(\Omega)}+\|\Delta m_n(t)\|^2_{L^2(\Omega)} \right)^2 .
\end{flalign*}
	Substituting the estimates for $E_1,E_2,E_3$ and $E_4$ in \eqref{SOE} and choosing $\epsilon=\frac{1}{8}$, we arrive at
	\begin{eqnarray}\label{H2} 
		\frac{1}{2}\frac{d}{dt} \| \Delta m_n(t)\|^2_{L^2(\Omega)} + \frac{1}{2}\|\nabla  \Delta m_n(t)\|^2_{L^2(\Omega)}\hspace{2in} \nonumber\\ 
		\leq C(\Omega) \left(1+\|u(t)\|^2_{H^1(\Omega)}\right) \Big(1+\|m_n(t)\|^2_{L^2(\Omega)}+\|\Delta m_n(t)\|^2_{L^2(\Omega)}\Big)^5.
	\end{eqnarray}
	Combining equation \eqref{L2} and \eqref{H2}, we have
	\begin{eqnarray}\label{MI}
		\frac{d}{dt} \left(\|m_n(t)\|^2_{L^2(\Omega)}+\| \Delta m_n(t)\|^2_{L^2(\Omega)}\right)+\|\nabla m_n(t)\|^2_{L^2(\Omega)}+ \|\nabla  \Delta m_n(t)\|^2_{L^2(\Omega)}\nonumber\\
		\leq  C(\Omega)\left(1+\|u(t)\|^2_{H^1(\Omega)}\right)\Big(1+\| m_n(t)\|^2_{L^2(\Omega)}+\| \Delta m_n(t)\|^2_{L^2(\Omega)}\Big)^5.\label{CE}
	\end{eqnarray}
	Define $y(t):=1+\|m_n(t)\|^2_{L^2(\Omega)}+\| \Delta m_n(t)\|^2_{L^2(\Omega)}$  and $g(t):=C(\Omega) \left(1+\|u(t)\|^2_{H^1(\Omega)}\right).$ From \eqref{MI}, it is clear that $y(t)$ satisfies the  ODE: $\frac{dy(t)}{dt}\leq g(t)y(t)^5,$ and hence by the application of Comparison Lemma (Proposition \ref{CLO}), we have 
	$$y(t) < \frac{y(0)}{\bigg(1-4 y(0)^4\int_0^t g(\tau)\ d\tau\bigg)^{\frac{1}{4}}}, \ \text{provided} \ \int_0^t g(\tau)\  d\tau < \frac{1}{4 y(0)^4}.$$
Since $\|\mathbb{P}_nm_0\|_{H^2(\Omega)} \leq C\  \|m_0\|_{H^2(\Omega)}$(see, \cite{GCRJ}), we have
$$y(0)=1+\|m_n(0)\|^2_{L^2(\Omega)}+ \|\Delta m_n(0)\|^2_{L^2(\Omega)} \leq 1+\|\mathbb{P}_nm_0\|^2_{H^2(\Omega)}\leq 1+C\  \|m_0\|^2_{H^2(\Omega)}. $$ Now, by setting $M_0:= 1+C\  \|m_0\|^2_{H^2(\Omega)}$, we get
	$$	1+\|m_n(t)\|^2_{L^2(\Omega)}+\| \Delta m_n(t)\|^2_{L^2(\Omega)}
	< \frac{M_0}{\bigg( 1-4M_0^4 C(\Omega) \int_0^t  \left(1+\|u(\tau)\|^2_{H^1(\Omega)}\right)d\tau \bigg)^{\frac{1}{4}}}$$
	as long as $ \displaystyle C(\Omega) \int_0^t \left(1+\|u(\tau)\|^2_{H^1(\Omega)}\right)  d\tau < \frac{1}{4M_0^4}.$
	If the above inequality holds for every $t\in [0,T]$, then we can directly get a uniform bound for global time. If not, then for every control $u \in L^2(0,T;H^1(\Omega)),$ there exists a time $T^*$ such that for any time $\tilde{T}<T^*$,
	$C(\Omega) \int_0^{\tilde{T}} \left(1+\|u(t)\|^2_{H^1(\Omega)}\right)  dt \leq \frac{1}{8M_0^4}$. Therefore,
	\begin{equation}\label{UB1}
		\sup_{t \in [0,\tilde{T}]} \left[1+  \| m_n(t)\|^2_{L^2(\Omega)}+\| \Delta m_n(t)\|^2_{L^2(\Omega)}  \right] \leq \sqrt[4]{2}M_0, \ \ \forall \ n\in\mathbb N.
	\end{equation}
	Using this estimate in equation \eqref{CE}, we get
	\begin{multline}\label{UB2}
		\int_0^{\tilde{T}} \left(\|\nabla m_n(t)\|^2_{L^2(\Omega)}+ \|\nabla  \Delta m_n(t)\|^2_{L^2(\Omega)}\right)  dt \\
		\leq \left(\|m_0\|^2_{L^2(\Omega)}+\|\Delta m_0\|^2_{L^2(\Omega)}\right)
		+ C(\Omega) (\sqrt[4]{2}M_0)^5 \left(T+\|u\|^2_{L^2(0,T;H^1(\Omega))}\right) \ \ \ \forall \ n\in \mathbb{N}.
	\end{multline}

	Therefore, as a result of estimate \eqref{UB1}, \eqref{UB2} and Lemma \ref{EN}, we get that $\{m_n\}$ is uniformly bounded in $L^{\infty}(0,\tilde{T};H^2(\Omega))$ and $L^2(0,\tilde{T};H^3(\Omega))$ for every $\tilde{T}<T^*$.
	
	Next, we need to show that $(m_n)_t$ is uniformly bounded in $L^2(0,\tilde T;H^1(\Omega))$. In order to do so, consider the  $H^1$ norm of $(m_n)_t$ in \eqref{GA}. As $m_n \in H^3(\Omega),$ we have $|\nabla m_n|^2 \in H^1(\Omega)$ and hence the inequality \eqref{AIN} shows that $|\nabla m_n|^2m_n\in H^1(\Omega)$. Similarly, as $u\in H^1(\Omega)$, so both $m_n \times u$ and $m_n \times (m_n \times u)$ are in $H^1(\Omega)$. 
	
	Now, taking  the $L^2(0,\tilde{T})$ norm of $\|(m_n)_t\|_{H^1(\Omega)}$ and using the uniform bounds \eqref{UB1} and \eqref{UB2}, we can get
	$$\int_0^{\tilde{T}} \left(\|(m_n)_t(\tau)\|^2_{L^2(\Omega)}+ \|(\nabla m_n)_t(\tau)\|^2_{L^2(\Omega)}\right) \ d\tau \ \leq C \ \ \ \forall \ n.$$  
	Hence, $\{(m_n)_t\}$ is uniformly bounded in $L^2(0,\tilde{T};H^1(\Omega))$ for every $\tilde{T}<T^*$.
	
	Using the Aloglu weak$^*$ compactness and reflexive weak compactness theorems (Theorem 4.18, \cite{JCR}), we get	
	\begin{eqnarray} \left\{\begin{array}{cccll}
			m_n &\overset{w}{\rightharpoonup} & m \  &\mbox{weakly in}& \ L^2(0,\tilde{T};H^3(\Omega)),\\
			m_n &\overset{w^\ast}{\rightharpoonup} & m \ &\mbox{weak$^*$ in}& \ L^{\infty}(0,\tilde{T};H^2(\Omega)),\\
			(m_n)_t &\overset{w}{\rightharpoonup} & m_t \ &\mbox{weakly in}& \ L^2(0,\tilde{T};H^1(\Omega)), \ \ \mbox{as} \  \ n\to \infty. \label{wc}
		\end{array}\right.	
	\end{eqnarray}
	
	By using the Aubin-Lions-Simon lemma (see, Corollary 4, \cite{JS}), we can obtain a sub-sequence of $\{m_n\}$ (again denoted as $\{m_n\}$) such that $m_n \to m$ strongly in $L^2(0,\tilde{T};H^2(\Omega))$ and $C([0,\tilde T];H^1(\Omega))$.
	
	\noindent Step 2: \textit{Passing to the limit}
	
	Using \eqref{wc} and the above strong convergence results, we can show the weak convergence of each term of \eqref{GA} in $L^2(0,\tilde{T};L^2(\Omega))$. We shall prove this for the nonlinear terms on the right-hand side of \eqref{GA} with the test functions $v\in L^2(0,T;W_n)$. 
	\begin{Lem}\label{lem1}
		Suppose $\{m_n\}$ satisfies the weak convergence in \eqref{wc}. Further $m_n \to m$ strongly in $L^2(0,\tilde{T};H^2(\Omega))\cap C([0,\tilde{T}];H^1(\Omega))$. Then for any test function $v \in L^2(0,\tilde{T};W_n )$, the following convergences hold:
		\begin{enumerate}[label=(\roman*)]
			\item $\displaystyle \int_0^{\tilde{T}} \big(|\nabla m_n|^2 m_n,v \big)\ dt \to \int_0^{\tilde{T}}  \big(|\nabla m|^2 m,v\big)\ dt,$
			\item $\displaystyle \int_0^{\tilde{T}} \big(m_n\times  \Delta m_n,v\big)\ dt \to \int_0^{\tilde{T}} \big(m\times \Delta m,v\big)\ dt,$
			\item $\displaystyle \int_0^{\tilde{T}} \big(m_n\times  u,v\big)\ dt \to \int_0^{\tilde{T}} \big(m\times u,v\big)\ dt,$
			\item $\displaystyle \int_0^{\tilde{T}} \big(m_n\times (m_n \times u),v\big)\ dt \to \int_0^{\tilde{T}} \big(m\times (m \times u),v\big)\ dt$\ \ \ \ \  as $n \to \infty$.
		\end{enumerate}
	\end{Lem}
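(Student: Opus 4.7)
The plan is to split each nonlinear difference by adding and subtracting judiciously, so that one factor carries the strong convergence of $\{m_n\}$ while the other carries a uniform bound, or alternatively one factor is weakly convergent while the other is a fixed (limit) function. The 2D embeddings $H^2(\Omega)\hookrightarrow L^\infty(\Omega)$ and $H^1(\Omega)\hookrightarrow L^p(\Omega)$ for every finite $p$, combined with the uniform bounds produced in Step 1, supply the integrability for every Hölder estimate.

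For (ii), which is the most delicate because it involves the top-order term $\Delta m_n$, I would write
\[
(m_n\times\Delta m_n)-(m\times\Delta m)=(m_n-m)\times\Delta m_n+m\times(\Delta m_n-\Delta m).
\]
After taking the $L^2(0,\tilde T;L^2(\Omega))$ pairing with $v$, the first piece vanishes in the limit because $m_n\to m$ strongly in $L^2(0,\tilde T;L^\infty(\Omega))$ (via $H^2\hookrightarrow L^\infty$) while $\Delta m_n$ is uniformly bounded in $L^2(0,\tilde T;L^2(\Omega))$. The second piece is handled by observing that $m\times v\in L^2(0,\tilde T;L^2(\Omega))$ is a fixed element, so the weak convergence $\Delta m_n\rightharpoonup\Delta m$ in $L^2(0,\tilde T;L^2(\Omega))$ forces the pairing to converge.

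For (i) I would decompose
\[
|\nabla m_n|^2 m_n-|\nabla m|^2 m=\bigl[(\nabla m_n+\nabla m)\cdot(\nabla m_n-\nabla m)\bigr]m_n+|\nabla m|^2(m_n-m).
\]
Using the strong convergence $\nabla m_n\to\nabla m$ in $L^2(0,\tilde T;L^4(\Omega))$ inherited from strong convergence in $L^2(0,\tilde T;H^1(\Omega))$, the uniform $L^\infty(0,\tilde T;L^4(\Omega))$ bound on $\nabla m_n+\nabla m$, and the uniform $L^\infty$ bound on $m_n$, the first piece tends to zero against $v\in L^2(0,\tilde T;H^1(\Omega))$. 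The second piece vanishes because $m_n\to m$ strongly in $L^2(0,\tilde T;L^\infty(\Omega))$ while $|\nabla m|^2\in L^\infty(0,\tilde T;L^2(\Omega))$ by the uniform $H^2$ bound. Parts (iii) and (iv) are routine: write $(m_n-m)\times u$ for (iii), and telescope
\[
m_n\times(m_n\times u)-m\times(m\times u)=(m_n-m)\times(m_n\times u)+m\times\bigl((m_n-m)\times u\bigr)
\]
for (iv), then apply Hölder with $u\in L^2(0,\tilde T;L^4(\Omega))$ and the strong convergence $m_n\to m$ in $C([0,\tilde T];L^4(\Omega))$.

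The principal difficulty is (ii): after splitting, the weakly-only-convergent factor $\Delta m_n$ must be paired against a fixed function rather than against the still-varying $m_n$, which is precisely what the decomposition arranges. Item (i) is a secondary concern because of the quadratic gradient nonlinearity, but the generous 2D Sobolev embeddings make the bookkeeping routine once the decomposition is in place.
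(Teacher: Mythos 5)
Your proposal follows essentially the same strategy as the paper: telescope each nonlinear difference so that one factor carries the strong convergence of $\{m_n\}$ and the others carry uniform bounds, then close with H\"older and the 2D embeddings. The decompositions in (i), (iii) and (iv) are the same up to which factor receives the difference, and all close the same way. The one genuine (small) divergence is in (ii): for the piece $m\times(\Delta m_n-\Delta m)$ the paper simply uses that $\Delta m_n\to\Delta m$ \emph{strongly} in $L^2(0,\tilde T;L^2(\Omega))$, which is already contained in the hypothesis $m_n\to m$ in $L^2(0,\tilde T;H^2(\Omega))$, whereas you invoke only the weak convergence of $\Delta m_n$ against the fixed function $m\times v$. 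Both are valid; your route would survive without the strong $H^2$ convergence, but here it buys nothing extra.

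One point to tighten in (ii): as written, you pair $\|m_n-m\|_{L^\infty(\Omega)}$ (convergent in $L^2(0,\tilde T)$), $\|\Delta m_n\|_{L^2(\Omega)}$ (bounded in $L^2(0,\tilde T)$) and $\|v\|_{L^2(\Omega)}$ (in $L^2(0,\tilde T)$); three $L^2$-in-time factors do not give an $L^1(0,\tilde T)$ integrand by H\"older. The fix is immediate and uses bounds you already have: either take $\|\Delta m_n\|_{L^2(\Omega)}$ in $L^\infty(0,\tilde T)$ via the uniform $L^\infty(0,\tilde T;H^2(\Omega))$ bound, or take $m_n-m\to 0$ in $C([0,\tilde T];H^1(\Omega))\hookrightarrow C([0,\tilde T];L^4(\Omega))$ and put $\Delta m_n$ in $L^2(0,\tilde T;L^4(\Omega))$, which is how the paper balances the exponents. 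With that adjustment the argument is complete.
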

	The proof of Lemma \ref{lem1} is given after the completion of this theorem.\\
	
	\noindent As a consequence of \eqref{wc} and Lemma \ref{lem1}, taking limit in \eqref{GA} and using the denseness property of $L^2(0,\tilde{T};W_n)$ in $L^2(0,\tilde{T};L^2(\Omega))$, we conclude that $m$ satisfies the first equation of \eqref{EP} in $L^2(0,\tilde{T};L^2(\Omega))$.

	\noindent Step 3: \textit{Verification of initial data and $|m(x,t)|=1$}\\
	 From the convergence \eqref{wc}, it is evident that $m\in L^2(0,\tilde T;H^3(\Omega))$ and $m_t\in L^2(0,\tilde T;H^1(\Omega)),$ whence $m\in C([0,\tilde T];H^2(\Omega))$ (see, \cite{JCR}, Corollary 7.3). Consequently, the proof of $m(0)=m_0$ follows through the standard argument from \eqref{EP}, \eqref{GA} and Lemma \ref{lem1}.   
	 
	 For the regular solution $m\in L^2(0,\tilde{T};H^3(\Omega)) \cap L^\infty(0,\tilde{T};H^2(\Omega))$ of system \eqref{EP}, we can show that $|m(x,t)|=1$ in $\Omega_{\tilde{T}}.$ Let us take the scalar product of \eqref{EP} with $m$ and use the identity \eqref{VPI}. By the property $a \cdot(a \times b)=0$, the last three terms of \eqref{EP} are zero, and so we get
	 $$\frac{d}{dt}\big(|m|^2-1\big)-  \Delta \big(|m|^2-1\big) -2\ |\nabla m|^2 \big(|m|^2-1\big)=0.$$ By setting $y=|m|^2-1,$ the rest of the proof can be completed by deriving energy estimate for this equation (see, Theorem 1.1, \cite{GCPF}).

\noindent Step 4: \textit{Uniqueness of strong solution}
	
	Let $m_1,m_2 \in L^2(0,\tilde{T};H^3(\Omega))\cap C([0,\tilde{T}];H^2(\Omega))$ be the two regular solutions of system \eqref{EP} corresponding to $u \in L^2(0,\tilde{T};H^1(\Omega))$ and $m_0\in H^2(\Omega)$ satisfying $|m_1|=|m_2|=1.$ Then $v=m_1-m_2$ will solve the equation
	$$
	\begin{cases}
		v_t- \Delta v=((\nabla m_1+\nabla m_2)\cdot \nabla v)\  m_1+  |\nabla m_2|^2\ v+ m_1 \times \Delta v\\
		\hspace{.61in}+ v \times \Delta m_2 +  v \times u - v \times (m_1 \times u) -  m_2 \times (v \times u),\ \ (x,t) \in \Omega_{\tilde{T}},\\
		\frac{\partial v}{\partial \eta}=0 \ \ \text{on}\ \partial \Omega_{\tilde{T}},\\
		v(0)=0 \ \ \text{in}\ \Omega.	
	\end{cases}
	$$	
	By taking, inner product with $v$ and applying properties of cross product, we get
	\begin{align*}
		\lefteqn{\frac{1}{2}\frac{d}{dt}\|v(t)\|^2_{L^2(\Omega)} +  \|\nabla v(t)\|^2_{L^2(\Omega)}= \int_\Omega ((\nabla m_1+ \nabla m_2)\cdot \nabla v)\ m_1 \cdot v\ dx }\\
		& +  \int_\Omega |\nabla m_2|^2\ |v|^2\ dx +  \int_\Omega (m_1 \times \Delta v)\cdot v\ dx - \int_\Omega (m_2 \times (v \times u))\cdot v\ dx.
	\end{align*}
By integration by parts for the third term and using the embedding $H^1(\Omega) \hookrightarrow L^4(\Omega)$ for the forth term on the right-hand side, we derive
\begin{align*}
		\lefteqn{\frac{1}{2}\frac{d}{dt}\|v(t)\|^2_{L^2(\Omega)} +  \|\nabla v(t)\|^2_{L^2(\Omega)}}\\
		&\leq  \  \Big( 2\|\nabla m_1(t)\|_{L^\infty(\Omega)} + \|\nabla m_2(t)\|_{L^\infty(\Omega)} \Big)\  \|v(t)\|_{L^2(\Omega)} \|\nabla v(t)\|_{L^2(\Omega)} +  \  \|\nabla m_2(t)\|^2_{L^\infty(\Omega)} \|v(t)\|^2_{L^2(\Omega)}\\
		& \hspace{.3in}+   C\  \|u(t)\|_{H^1(\Omega)} \|v(t)\|_{L^2(\Omega)} \left(\|v(t)\|_{L^2(\Omega)} + \|\nabla v(t)\|_{L^2(\Omega)}\right).
\end{align*} 
Applying Cauchy's inequality and the embedding $H^2(\Omega) \hookrightarrow L^\infty(\Omega)$, we get
	$$\frac{d}{dt}\|v(t)\|^2_{L^2(\Omega)} +  \|\nabla v(t)\|^2_{L^2(\Omega)}\leq C \left(1+\|m_1(t)\|^2_{H^3(\Omega)}+\|m_2(t)\|^2_{H^3(\Omega)}+\|u(t)\|^2_{H^1(\Omega)}\right)\ \|v(t)\|^2_{L^2(\Omega)}.$$	
	Since $m_1, m_2 \in L^2(0,\tilde{T};H^3(\Omega)) \cap L^\infty(0,\tilde{T};H^2(\Omega))$ and $u \in L^2(0,\tilde{T};H^1(\Omega))$, using Gronwall's inequality, we arrive $\|v(t)\|_{L^2(\Omega)}=0$, $\forall\ t\in [0,\tilde{T}]$.	Therefore, the solution is unique.  
\end{proof}

\begin{proof}[Proof of Lemma \ref{lem1}]	
	Let $v \in L^2(0,\tilde{T};W_n)$ be any test function.\\
	(i) Using H\"older's inequality and continuous embeddings $H^2(\Omega) \hookrightarrow L^\infty(\Omega), H^1(\Omega) \hookrightarrow L^p(\Omega)$ for $p \in [1,\infty)$, we get
\begin{align*}
	\lefteqn{\int_0^{\tilde{T}} \left[\left(|\nabla m_n|^2 m_n,v \right) - \left(|\nabla m|^2 m,v \right)\right] dt}\\
	&= \int_0^{\tilde{T}}\left[\left(|\nabla m_n|^2 (m_n-m)+(|\nabla m_n|^2-| \nabla m|^2)m,v\right)\right]dt\\
	&\leq \int_0^{\tilde{T}} \left\| \ |\nabla m_n(t)|^2 \right\|_{L^4(\Omega)} \|m_n(t)-m(t)\|_{L^4(\Omega)} \|v(t)\|_{L^2(\Omega)} \ dt \\
	&\ \ \ \ +\ \int_0^{\tilde{T}} \|m(t)\|_{L^\infty(\Omega)} \|\nabla m_n(t) + \nabla m(t) \|_{L^4(\Omega)}\|\nabla m_n(t)-\nabla m(t)\|_{L^4(\Omega)} \|v(t)\|_{L^2(\Omega)}\  dt \nonumber\\
	&\leq C\ \|m_n\|_{L^\infty(0,\tilde{T};H^2(\Omega))}^2 \|m_n-m\|_{L^2(0,\tilde{T};H^1(\Omega))} \|v\|_{L^2(0,\tilde{T};L^2(\Omega))} \\
	&\ \ \ \ +\ C\ \|m\|_{L^\infty(0,\tilde{T};H^2(\Omega))} \|m_n+m\|_{L^\infty(0,\tilde{T};H^2(\Omega))} \|m_n-m\|_{L^2(0,\tilde{T};H^2(\Omega))} \|v\|_{L^2(0,\tilde{T};L^2(\Omega))} \to 0 \ \text{as}\  n \to \infty,
\end{align*}
	since  $m_n$ is uniformly bounded in $ L^\infty(0,\tilde{T};H^2(\Omega))$ and $m_n\to m$ strongly  in $L^2(0,\tilde{T};H^2(\Omega)$.\\ \\
	(ii) By proceeding in a simillar way for the term $m_n \times \Delta m_n$, we have
	\begin{align*}
		\lefteqn{\int_0^{\tilde{T}} \Big[\big(m_n\times  \Delta m_n,v\big)-\big(m\times \Delta m,v\big)\Big]\ dt}\\ 
		&\leq C \ \|m_n-m\|_{L^\infty(0,\tilde{T};H^1(\Omega))} \|\Delta m_n\|_{L^2(0,\tilde{T};H^1(\Omega))} \|v\|_{L^2(0,\tilde{T};L^2(\Omega))}\\
		&\ \ \ \  +C \ \|m\|_{L^\infty(0,\tilde{T};H^2(\Omega))} \|m_n-m\|_{L^2(0,\tilde{T};H^2(\Omega))} \|v\|_{L^2(0,\tilde{T};L^2(\Omega))} \to 0 \ \text{as}\ n \to \infty.
	\end{align*}
	(iii) Applying H\"older's inequality followed by the embedding $H^1(\Omega) \hookrightarrow L^4(\Omega)$, we obtain 
	\begin{align*}
		\lefteqn{\int_0^{\tilde{T}} \Big[\big(m_n\times  u,v\big)-\big(m\times u,v\big)\Big]\ dt }\\
		&\leq \int_0^{\tilde{T}}  \|m_n(t)-m(t)\|_{L^4 (\Omega)} \|u(t)\|_{L^4 (\Omega)} \|v(t)\|_{L^2 (\Omega)} dt\\  
		&\leq C \ \|m_n(t)-m(t)\|_{L^\infty(0,\tilde{T};H^1(\Omega))} \|u(t)\|_{L^2(0,\tilde{T};H^1(\Omega))} \|v(t)\|_{L^2(0,\tilde{T};L^2(\Omega))} \to 0 \ \text{as}\ n \to \infty,
	\end{align*}
	since $m_n\to m$ strongly  in $L^\infty(0,\tilde{T};H^1(\Omega))$ and $u\in L^2(0,\tilde{T};H^1(\Omega))$\\ \\
	(iv) For the last term $m_n \times (m_n \times u)$, we do the computation similar to the preceding estimate to get
	\begin{align*}
	\lefteqn{\int_0^{\tilde{T}} \Big(\big(m_n\times (m_n \times u),v\big)-\big(m\times (m \times u),v\big)\Big)\  dt}\\
	&\leq C \int_0^{\tilde{T}} \|m_n(t)\|_{L^\infty (\Omega)} \|m_n(t)-m(t)\|_{L^4 (\Omega)} \|u(t)\|_{L^4 (\Omega)} \|v(t)\|_{L^2 (\Omega)}\ dt\\
	&\hspace{.3in}+ C\int_0^{\tilde{T}}  \|m_n(t)-m(t)\|_{L^4 (\Omega)} \|m(t)\|_{L^\infty (\Omega)} \|u(t)\|_{L^4 (\Omega)} \|v(t)\|_{L^2 (\Omega)}\ dt\\
	&\leq C \ \|m_n\|_{L^\infty(0,\tilde{T};H^2(\Omega))} \|m_n-m\|_{L^\infty(0,\tilde{T};H^1(\Omega))} \|u\|_{L^2(0,\tilde{T};H^1(\Omega))} \|v\|_{L^2(0,\tilde{T};L^2(\Omega))}\\
	&\hspace{.3in}+\  C \ \|m_n-m\|_{L^\infty(0,\tilde{T};H^1(\Omega))} \|m\|_{L^\infty(0,\tilde{T};H^2(\Omega))} \|u\|_{L^2(0,\tilde{T};H^1(\Omega))} \|v\|_{L^2(0,\tilde{T};L^2(\Omega))} \to 0 \ \text{as}\  n \to \infty.
	\end{align*} 
This completes the proof of Lemma \ref{lem1}.
\end{proof}

\subsection{Global Existence of Smooth Solutions}\label{SECGE}

This section shows that the local solution established previously can be proven to be global provided the initial data and control satisfy certain smallness conditions. This result is crucial to discuss the solvability of optimal control problems.  The proof follows the strategy used in \cite{GCPF,PCCF}.

\begin{proof}[Proof of Theorem \ref{GLOBAL}]
	From Theorem \ref{LOCAL} there exists a regular solution $m \in L^2(0,\tilde{T};H^3(\Omega)) \cap C([0,\tilde{T}];H^2(\Omega))$ of system \eqref{NLP} in the interval $[0,\tilde{T}]$. Suppose $T^*<T$ is the maximal time up to which the regular solution exist.

	Then we take $L^2$ inner product of \eqref{NLP} with $-\Delta m$. By invoking the vector product property $a\cdot(b\times c)=-(b\times  a)\cdot c$, we get 
	$\big(m\times(m\times \Delta m),\Delta m\big)= -\big(m\times \Delta m,m\times \Delta m\big)$. Then applying H\"older's inequality, Cauchy's inequality and using the fact that $|m(x,t)|=1$ for all $(x,t) \in \Omega_{\tilde{T}}$, we obtain
	\begin{align*}
		\lefteqn{\frac{1}{2}\frac{d}{dt}\|\nabla m(t)\|^2_{L^2(\Omega)} + \ \int_\Omega |m(t) \times \Delta m(t)|^2\ dx }\\
		&= \big(u(t),m(t) \times \Delta m(t)\big)- \big(m(t) \times u(t),m(t) \times \Delta m(t)\big)\\
		&\leq  \|u(t)\|_{L^2(\Omega)} \|m(t) \times \Delta m(t)\|_{L^2(\Omega)}+ \|m(t) \times u(t)\|_{L^2(\Omega)} \|m(t) \times \Delta m(t)\|_{L^2(\Omega)}\\
		&\leq \frac{1}{2}\|m(t) \times \Delta m(t)\|^2_{L^2(\Omega)}+ 2\  \|u(t)\|^2_{L^2(\Omega)}.
	\end{align*}
	Therefore, integrating over $0$ to $t$, we have
	\begin{equation}\label{E1}		
		\|\nabla m(t)\|^2_{L^2(\Omega)}+\int_0^t \|m(\tau)\times \Delta m(\tau)\|^2_{L^2(\Omega)}d\tau  \leq  4\ \left(\|\nabla m_0\|^2_{L^2(\Omega)} + \|u\|^2_{L^2(0,T;L^2(\Omega))}\right).
	\end{equation}
	Recall that any regular solution of \eqref{NLP} on $[0,\tilde T]\times\Omega$ will also be a regular solution of the  equivalent problem \eqref{EP} on $[0,\tilde T]\times\Omega$. Now, taking inner product of \eqref{EP} with $-\Delta m$ in $L^2(\Omega)$, using \eqref{ES5} and the fact that $m \cdot \Delta m=-|\nabla m|^2$, we get
	\begin{align*}
		\lefteqn{\frac{1}{2} \frac{d}{dt} \|\nabla m(t)\|^2_{L^2(\Omega)} + \int_\Omega |\Delta m(t)|^2\ dx }\\
		&=- \int_\Omega | \nabla m|^2 m\cdot \Delta m\ dx -  \int_\Omega (m \times u)\cdot \Delta m\ dx +  \int_\Omega (m \times (m \times u))\cdot \Delta m\ dx\\
		&\leq \ \|\nabla m(t)\|^4_{L^4(\Omega)} + \ \|u(t)\|_{L^2(\Omega)} \|\Delta m(t)\|_{L^2(\Omega)} +\ \|u(t)\|_{L^2(\Omega)} \|\Delta m(t)\|_{L^2(\Omega)}\\
		&\leq C^* \ \|\nabla m(t)\|^2_{L^2(\Omega)} \|\Delta m(t)\|^2_{L^2(\Omega)} + \frac{1}{2} \|\Delta m(t)\|^2_{L^2(\Omega)} + 2\ \|u(t)\|^2_{L^2(\Omega)},	
	\end{align*}
where $C^*=C^4$, the constant $C$ coming from estimate \eqref{ES5}. Using the bound for $\|\nabla m(t)\|^2_{L^2(\Omega)}$  from equation \eqref{E1},
\begin{equation}\label{MCOC}
	\frac{d}{dt} \|\nabla m(t)\|^2_{L^2(\Omega)} +  \bigg[1- 8C^* \left(\|\nabla m_0\|^2_{L^2(\Omega)} + \|u\|^2_{L^2(0,T;L^2(\Omega))}\right)\bigg]\ \|\Delta m(t)\|^2_{L^2(\Omega)} \leq 4\ \|u(t)\|^2_{L^2(\Omega)}.
\end{equation}
	By assumption, the initial data and control satisfies  $  \|\nabla m_0\|^2_{L^2(\Omega)} + \|u\|^2_{L^2(0,T;L^2(\Omega))}  \leq \frac{1}{16C^*}$, so that
	$$1-8C^*\left(\|\nabla m_0\|^2_{L^2(\Omega)} + \|u\|^2_{L^2(0,T;L^2(\Omega))}\right) \geq \frac{1}{2}.$$ 
	Consequently, we have 
	$$ \frac{d}{dt} \|\nabla m(t)\|^2_{L^2(\Omega)} + \frac{1}{2}\ \|\Delta m(t)\|^2_{L^2(\Omega)} \ \leq 4 \ \|u(t)\|^2_{L^2(\Omega)},\ \ \ \forall\ t \in [0,\tilde{T}],$$
	whence
	\begin{equation}\label{E2}
		\|\nabla m(t)\|^2_{L^2(\Omega)} +\frac{1}{2} \int_0^t \|\Delta m(\tau)\|^2_{L^2(\Omega)} d\tau \leq  4\ \left(\|\nabla m_0\|^2_{L^2(\Omega)} + \|u\|^2_{L^2(0,T;L^2(\Omega))}\right), \ \ \ \forall\ t \in [0,\tilde{T}].	
	\end{equation}
	In order to obtain the regularity of solution in $L^2(0,T;H^3(\Omega))$, we again appeal to Galerkin approximated system \eqref{GA}.
	By hitting equation \eqref{GA} with $\Delta^2m_n$ and integrating by parts, we get
	\begin{eqnarray}\label{E3}
	\lefteqn{\frac{1}{2}\frac{d}{dt} \| \Delta m_n(t)\|^2_{L^2(\Omega)} + \int_{\Omega} |\nabla  \Delta m_n(t)|^2\ dx }\nonumber\\
	&=&- \int_\Omega  \nabla \left(|\nabla m_n(t)|^2m_n(t)\right) \cdot \nabla \Delta m_n(t)
	- \int_\Omega  \nabla \big(m_n(t) \times \Delta m_n(t)\big) \cdot \nabla \Delta m_n(t)\ dx\\
	&&- \int_\Omega  \nabla\big(m_n(t) \times u(t)\big) \cdot \nabla\Delta m_n(t)\ dx
	+ \int_\Omega  \nabla\big(m_n(t) \times (m_n(t) \times u(t))\big) \cdot \nabla\Delta m_n(t)\ dx 
	:= \sum_{i=1}^4 S_i. \nonumber
\end{eqnarray}
	Note that from here onwards the value of the generic constant does not depend on the approximate solution deduced within the Galerkin scheme. By using the estimate \eqref{ES4}, Gagliardo-Nirenberg inequalities \eqref{ES6} and \eqref{ES8}, we get 
	\begin{flalign*}
		S_1 &= - \int_\Omega \left[ 2  \nabla m_n (D^2m_n)m_n \cdot \nabla \Delta m_n \  - |\nabla m_n|^2 \nabla m_n \cdot \nabla \Delta m_n\right] \ dx&\\
		&\leq 2 \ \|m_n(t)\|_{L^\infty(\Omega)} \|\nabla m_n(t)\|_{L^\infty(\Omega)} \|D^2 m_n(t)\|_{L^2(\Omega)} \| \nabla \Delta m_n(t)\|_{L^2(\Omega)} +    \| \nabla m_n(t)\|^3_{L^6(\Omega)} \| \nabla \Delta m_n(t)\|_{L^2(\Omega)}\\ 
		&\leq C\ \|m_n(t)\|_{L^\infty(\Omega)} \|\nabla m_n(t)\|^{\frac{1}{2}}_{L^2(\Omega)} \|\Delta m_n(t)\|_{L^2(\Omega)} \|\nabla \Delta m_n(t)\|^{\frac{3}{2}}_{L^2(\Omega)}\\
		&\ \ \ \ + C\ \|\nabla m_n(t)\|_{L^2(\Omega)} \|\Delta m_n(t)\|^2_{L^2(\Omega)} \|\nabla \Delta m_n(t)\|_{L^2(\Omega)}\\   	
		&\leq  \epsilon    \int_{\Omega}|\nabla \Delta m_n(t)|^2dx + C(\epsilon) \ \|m_n(t)\|^4_{L^\infty(\Omega)} \| \nabla m_n(t)\|_{L^2(\Omega)}^2\ \|\Delta m_n(t)\|^4_{L^2(\Omega)}\\
		& \ \ \ \  +C(\epsilon) \  \| \nabla m_n(t)\|^2_{L^2(\Omega)}\ \|\Delta m_n(t)\|^4_{L^2(\Omega)}.
	\end{flalign*}
	For the second term $S_2$,  using the fact that $(m_n \times \nabla\Delta m_n)\cdot \nabla \Delta m_n=0$ and estimate \eqref{ES8}, we derive 
\begin{flalign*}
	S_2 &= - \int_\Omega (\nabla m_n \times \Delta m_n) \cdot\nabla \Delta m_n \ dx \leq   \|\nabla m_n(t)\|_{L^\infty(\Omega)} \|\Delta m_n(t)\|_{L^2(\Omega)} \|\nabla \Delta m_n(t)\|_{L^2(\Omega)}&\\
	&\leq \epsilon \int_{\Omega}|\nabla \Delta m_n(t)|^2dx + C(\epsilon) \ 
	\|\nabla m_n(t)\|^2_{L^2(\Omega)} \|\Delta m_n(t)\|^4_{L^2(\Omega)}.
\end{flalign*}
	For the integrals $S_3$ and $S_4$, applying H\"{o}lder's inequality, the embeddings $H^1(\Omega) \hookrightarrow L^4(\Omega)$ and \eqref{ES3}, we obtain
	\begin{flalign*}
	S_3 &= - \int_\Omega (\nabla m_n \times u)\cdot \nabla \Delta m_n \ dx- \int_\Omega (m_n \times \nabla u)\cdot\nabla \Delta m_n \ dx&\\
	& \leq  \ \|\nabla m_n(t)\|_{L^4(\Omega)} \|u(t)\|_{L^4(\Omega)} \|\nabla \Delta m_n(t)\|_{L^2(\Omega)} + \ \|m_n(t)\|_{L^\infty(\Omega)} \|\nabla u(t)\|_{L^2(\Omega)} \|\nabla \Delta m_n(t)\|_{L^2(\Omega)}\\
	&\leq  \epsilon \int_{\Omega}|\nabla \Delta m_n(t) |^2dx + C(\epsilon) \ 
	 \|\Delta m_n(t)\|^2_{L^2(\Omega)} \|u(t)\|^2_{H^1(\Omega)}
	 + C(\Omega)\ \|m_n(t)\|^2_{L^\infty(\Omega)} \|u(t)\|^2_{H^1(\Omega)}.
	\end{flalign*}
	and
	\begin{flalign*}
	S_4 &= \int_\Omega (\nabla m_n \times (m_n \times u))\cdot \nabla \Delta m_n \ dx+\int_\Omega ( m_n \times (\nabla m_n \times u))\cdot \nabla \Delta m_n \ dx&\\
		&\ \ \ \ \ \ \ \ \ \ \ + \int_\Omega (m_n \times (m_n \times \nabla u))\cdot \nabla \Delta m_n \ dx\\
		& \leq  2\ \|m_n(t)\|_{L^\infty(\Omega)} \|\nabla m_n(t)\|_{L^4(\Omega)} \|u(t)\|_{L^4(\Omega)} \|\nabla \Delta m_n(t)\|_{L^2(\Omega)} +\  \|m_n(t)\|^2_{L^\infty(\Omega)} \|\nabla u(t)\|_{L^2(\Omega)} \|\nabla \Delta m_n(t)\|_{L^2(\Omega)}\\
		&\leq  \epsilon\ \int_{\Omega}|\nabla \Delta m_n(t) |^2dx + C(\epsilon)\ \|m_n(t)\|^2_{L^\infty(\Omega)} \left(\| m_n(t)\|^2_{L^\infty(\Omega)}+\|\Delta m_n(t)\|^2_{L^2(\Omega)}\right)\ \|u(t)\|^2_{H^1(\Omega)}.
	\end{flalign*}
	Choosing $\epsilon=\frac{1}{8}$ and substituting estimates for $S_1,S_2,S_3$ and $S_4$ in equation \eqref{E3}, we have
	\begin{eqnarray*}
	\lefteqn{\frac{1}{2} \frac{d}{dt} \|\Delta m_n(t)\|^2_{L^2(\Omega)}+ \frac{1}{2} \int_\Omega | \nabla \Delta m_n(t)|^2\ dx}\\ 
	&&\leq C\Big[\| \nabla m_n(t)\|_{L^2(\Omega)}^2 \left(1+\|m_n(t)\|^4_{L^\infty(\Omega)}\right) \|\Delta m_n(t)\|^4_{L^2(\Omega)} \\ 
	&&+ \left(\|m_n(t)\|^2_{L^\infty(\Omega)} + \|\Delta m_n(t)\|^2_{L^2(\Omega)}\right) \left(1+\|m_n(t)\|^2_{L^\infty(\Omega)} \right) \|u(t)\|^2_{H^1(\Omega)} \Big],
	\end{eqnarray*}
	where C depends on $\Omega$.
	

	By integrating over $0$ to $t$ and recalling from Theorem \ref{LOCAL} that $\{m_n\}$ is uniformly bounded in $L^\infty (0,\tilde{T};H^2(\Omega)) \cap  L^2(0,\tilde{T};H^3(\Omega))$, we see that the left-hand side integrals are sequentially lower semi-continuous. Besides, using the strong convergence of $\{m_n\}$ in $C([0,\tilde{T}];H^1(\Omega)) \cap L^2(0,\tilde{T};H^2(\Omega)),$ \eqref{E2} and the fact that $|m(x,t)|=1, \ \forall  (x,t)\in \Omega_{\tilde{T}}$, we conclude that 
	\begin{multline}\label{E4}
		\|\Delta m(t)\|^2_{L^2(\Omega)} +  \int_0^t \| \nabla \Delta m(\tau)\|^2_{L^2(\Omega)}\ d\tau \leq \|\Delta m_0\|^2_{L^2(\Omega)}  \\
		+C\ \bigg[  \left(\|\nabla m_0\|^2_{L^2(\Omega)}+\|u\|^2_{L^2(0,T;L^2(\Omega))}\right) \int_0^t  \ \left( \|\Delta m(\tau)\|^2_{L^2(\Omega)}\right)^2 \ d\tau\\
		+  \int_0^t  \ \left(1 + \|\Delta m(\tau)\|^2_{L^2(\Omega)}\right) \  \|u(\tau)\|^2_{H^1(\Omega)} \ d\tau \bigg],\ \ \forall\ t\in [0,\tilde{T}].
	\end{multline}	
	Applying Gronwall's inequality and using the estimate \eqref{E2}, we derive
	$$\|\Delta m(t)\|^2_{L^2(\Omega)}	\leq \exp\left\{C\left(1+\|\Delta m_0\|^2_{L^2(\Omega)}+\|u\|^2_{L^2(0,T;H^1(\Omega))}\right)^2\right\}\  \ \ \forall \ t \leq \tilde{T} <T^*.$$
	Since the right-hand side is bounded for every $0<\tilde{T}<T^*,$   $\|\Delta m(t)\|^2_{L^2(\Omega)}$ is uniformly bounded for all $t\in [0,T^*].$ Therefore, from \eqref{E4}, the regular solution exist on the entire interval $[0,T]$.\\ \\
	Now, we will derive the main energy estimate of this theorem. From the previous estimates, we know that
	\begin{equation}\label{E5}
	\sup_{t \in [0,T]} \|\Delta m(t)\|^2_{L^2(\Omega)} \leq \exp\left\{C\left(1+\|\Delta m_0\|^2_{L^2(\Omega)}+\|u\|^2_{L^2(0,T;H^1(\Omega))}\right)^2\right\}.
	\end{equation}
	Then applying this bound in estimate \eqref{E4}, we get
	\begin{equation}\label{E6}
	 \int_0^T \| \nabla \Delta m(\tau)\|^2_{L^2(\Omega)}\ d\tau \leq  \exp\left\{C\left(1+\|\Delta m_0\|^2_{L^2(\Omega)}+\|u\|^2_{L^2(0,T;H^1(\Omega))}\right)^2\right\}.		
	\end{equation}
	Next,  take $H^1$-norm of $m_t$ in \eqref{NLP} and applying \eqref{AIN}, we notice that
	\begin{eqnarray*}
	\|m\times(m\times\Delta m)\|_{H^1(\Omega)}&\leq& C\|m\|_{H^2(\Omega)}\|m\times\Delta m\|_{H^1(\Omega)} \leq C\|m\|^2_{H^2(\Omega)}\|\Delta m\|_{H^1(\Omega)} \\
		\|m\times(m\times u)\|_{H^1(\Omega)}&\leq& C\|m\|_{H^2(\Omega)}\|m\times u\|_{H^1(\Omega)} \leq C\|m\|^2_{H^2(\Omega)}\|u\|_{H^1(\Omega)} .
	\end{eqnarray*}
By doing similar estimates for the remaining terms of \eqref{NLP} and	the integration of  $\|m_t(t)\|^2_{H^1(\Omega)}$ over $(0,T)$ lead to the estimate
	\begin{eqnarray*}
		\lefteqn{\int_0^T \Big(\|m_t(t)\|^2_{L^2(\Omega)}+\|\nabla m_t(t)\|^2_{L^2(\Omega)}\Big) \ dt}\\ 
		&&\leq C \int_0^T \left( \|m(\tau)\|^4_{H^2(\Omega)} \|m(\tau)\|^2_{H^3(\Omega)} + \|m(\tau)\|^4_{H^2(\Omega)} \|u(\tau)\|^2_{H^1(\Omega)}\right) d\tau 
	\end{eqnarray*}
	where $C>0$ depends on $\Omega$. Now, using the bounds for $m$ in $L^\infty(0,T;H^2(\Omega))$ and $L^2(0,T;H^3(\Omega))$ from estimates \eqref{E5} and \eqref{E6} respectively, we obtain
	 $$\|m_t\|^2_{L^2(0,T;H^1(\Omega))}\leq  C(\Omega,T)\ \exp\left\{C\left(1+\|\Delta m_0\|^2_{L^2(\Omega)}+\|u\|^2_{L^2(0,T;H^1(\Omega))}\right)^2\right\}.$$
	By combining this inequality with estimates \eqref{E5} and \eqref{E6}, we obtain the  estimate \eqref{SSE}.
\end{proof}	
\section{Existence of optimal control}\label{SEOC}

In the previous section, we proved that if the initial data $m_0\in H^2(\Omega)$ satisfies \eqref{IC} and the control $u\in L^2(0,T;H^1(\Omega))$ are sufficiently small, then there exists a unique regular solution $m \in \mathcal{M}$ (Theorem \ref{GLOBAL}) to the system \eqref{NLP} on the interval $[0,T]$. For such a control $u,$ the regularity of the solution $m$ shows that the cost functional $\mathcal J(m,u)$ defined in \eqref{CF} is finite, that is, $\mathcal J(m,u)< +\infty$. In this section, we prove that the functional \eqref{CF} achieves optimal value at some solution pair $(\widetilde{m},\widetilde{u})$ of the system \eqref{NLP}. 

\begin{proof}[Proof of Theorem \ref{EOC}]
The proof follows from convexity and lower semi-continuity of the  non-negative cost functional $\mathcal J(\cdot,\cdot)$. Since the functional $\mathcal J(\cdot, \cdot)$ is bounded below, there exists a constant $\alpha\geq 0$ and a minimizing sequence $\big\{(m_n,u_n)\big\} \subset \mathcal{A}$ such that
	\begin{equation*}
		\lim_{n \to \infty} \mathcal J(m_n,u_n)= \inf_{(m,u) \in \mathcal{A}} \mathcal J(m,u)=\alpha.	
	\end{equation*}
The pair $(m_n,u_n)$ is a regular solution of the system 
\begin{equation}\label{EOPME}
	\begin{cases}
		(m_n)_t= m_n \times (\Delta m_n +u_n)-  m_n \times (m_n \times (\Delta m_n +u_n))\ \ \ \text{in}\  \Omega_T, \\
		\frac{\partial m_n}{\partial \eta}=0 \ \ \  \ \ \ \ \ \ \ \text{in}\ \partial\Omega_T,\\
		m_n(\cdot,0)=m_0 \ \ \text{in} \ \Omega.
	\end{cases}	
\end{equation}
	As the set of admissible class of controls $\mathcal{U}_{ad}$ is a closed, convex and bounded subset of the reflexive Banach space $L^2(0,T;H^1(\Omega))$, it is weakly sequentially compact. There exists a sub-sequence again denoted as $\{u_n\}$ such that $u_n \rightharpoonup \widetilde{u}$ weakly in $\mathcal{U}_{ad}$ for some element $\widetilde{u} \in \mathcal{U}_{ad}$.  From   Theorem \ref{GLOBAL}, $\{m_n\}$ is uniformly bounded in $L^2(0,T;H^3(\Omega)) \cap C([0,T];H^2(\Omega))$ and $\left\{ (m_n)_t\right\}$ is uniformly bounded in $L^2(0,T;H^1(\Omega))$. Then, by Aubin–Lions–Simon compactness theorem, $\{m_n\}$ is relatively compact in $C([0,T];H^1(\Omega)) \cap L^2(0,T;H^2(\Omega))$. Therefore, there exists a subsequence (again represented as) $\left\{ (m_n,u_n)\right\}$ such that
	\begin{eqnarray} \left\{\begin{array}{ccccl}
		u_n &\overset{w}{\rightharpoonup} & \widetilde{u} \ &\mbox{weakly in}&  L^2(0,T;H^1(\Omega)),\\
		m_n &\overset{w}{\rightharpoonup} & \widetilde{m} \  &\mbox{weakly in}&  L^2(0,T;H^3(\Omega)),\\
		(m_n)_t &\overset{w}{\rightharpoonup} & \widetilde{m}_t \ &\mbox{weakly in}&  L^2(0,T;H^1(\Omega)),\\
		m_n &\overset{s}{\to} & \widetilde{m} \  &\mbox{strongly in}& C([0,T];H^1(\Omega))\cap L^2(0,T;H^2(\Omega)),	 \ \ \mbox{as} \  \ n\to \infty. \label{P2}
		\end{array}\right.	
	\end{eqnarray}
We need to prove the following lemma to validate that $(\widetilde{m},\widetilde{u})$ is an optimal pair for (OCP).
	\begin{Lem}\label{lem2}
		Suppose the convergences in \eqref{P2} hold true. Then for any $v \in L^2(0,T;L^2(\Omega))$, we have
		\begin{enumerate}[label=(\roman*)]
			\item $\displaystyle \int_0^T \big(|\nabla m_n|^2m_n,v\big)\ dt \to \int_0^T \big(|\nabla \widetilde{m}|^2\widetilde{m},v\big)\ dt$,\vspace{0.1in}
			\item $\displaystyle \int_0^T \big(m_n \times \Delta m_n,v\big)\ dt \to \int_0^T \big(\widetilde{m} \times \Delta \widetilde{m},v\big)\ dt$,\vspace{0.1in}
			\item $\displaystyle\int_0^T \big(m_n \times u_n,v\big)\ dt  \to \int_0^T \big(\widetilde{m} \times \widetilde{u},v\big)\ dt$,\vspace{0.1in}
			\item $\displaystyle \int_0^T \big(m_n \times (m_n \times u_n),v\big)\ dt \to \int_0^T \big(\widetilde{m} \times (\widetilde{m} \times \widetilde{u}),v\big)\ dt$.
		\end{enumerate}
	\end{Lem}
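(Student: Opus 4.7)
The plan is to decompose each difference by telescoping so that every limit is a sum of two kinds of pieces: pieces containing a factor $m_n - \widetilde{m}$ paired with uniformly bounded quantities, handled via H\"older's inequality together with the strong convergence of $m_n$ in $C([0,T];H^1(\Omega))\cap L^2(0,T;H^2(\Omega))$; and pieces containing the factor $u_n - \widetilde{u}$, for which only weak convergence in $L^2(0,T;H^1(\Omega))$ is available. For the weak piece the key idea is to use the scalar triple product identity from Lemma \ref{CPP} to shift the $(u_n - \widetilde{u})$ factor into the slot paired against the test function $v$, so that what multiplies $(u_n - \widetilde{u})$ under the integral is a function built from $\widetilde{m}$ and $v$ that lies in $L^2(0,T;L^2(\Omega))$. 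Weak convergence $u_n\rightharpoonup\widetilde{u}$ in $L^2(0,T;L^2(\Omega))$ then closes the argument.

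Items (i) and (ii) are essentially the same computations as in Lemma \ref{lem1}(i)--(ii) with $\widetilde{m}$ in place of $m$ and the convergences from \eqref{P2} in place of the ones used there. Concretely, I would split
$$|\nabla m_n|^2 m_n - |\nabla\widetilde{m}|^2\widetilde{m} = \bigl[(\nabla m_n + \nabla\widetilde{m})\cdot\nabla(m_n - \widetilde{m})\bigr] m_n + |\nabla\widetilde{m}|^2 (m_n - \widetilde{m})$$
and
$$m_n\times\Delta m_n - \widetilde{m}\times\Delta\widetilde{m} = (m_n - \widetilde{m})\times\Delta m_n + \widetilde{m}\times\Delta(m_n - \widetilde{m}),$$
and conclude by H\"older together with the embeddings $H^2(\Omega)\hookrightarrow L^\infty(\Omega)$ and $H^1(\Omega)\hookrightarrow L^p(\Omega)$ for every finite $p$, combining the uniform $L^\infty(0,T;H^2(\Omega))$-bound on $\{m_n\}$ with its strong convergence.

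Item (iii) is the first place where the weak convergence of the control is genuinely used. I decompose
$$m_n\times u_n - \widetilde{m}\times\widetilde{u} = (m_n - \widetilde{m})\times u_n + \widetilde{m}\times(u_n - \widetilde{u}).$$
The first summand tends to zero after pairing with $v$ because $\|m_n - \widetilde{m}\|_{C([0,T];L^4(\Omega))}\to 0$ while $\{u_n\}$ is bounded in $L^2(0,T;L^4(\Omega))$ via $H^1(\Omega)\hookrightarrow L^4(\Omega)$. For the second, by the cyclic property of the triple product,
$$\int_0^T\!\!\int_\Omega \bigl(\widetilde{m}\times(u_n - \widetilde{u})\bigr)\cdot v\,dx\,dt = \int_0^T\!\!\int_\Omega (u_n - \widetilde{u})\cdot(v\times\widetilde{m})\,dx\,dt,$$
and since $\widetilde{m}\in L^\infty(\Omega_T)$ the vector $v\times\widetilde{m}$ lies in $L^2(0,T;L^2(\Omega))$, so the right-hand side vanishes in the limit by the weak convergence $u_n\rightharpoonup\widetilde{u}$ in $L^2(0,T;L^2(\Omega))$.

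The main obstacle is item (iv), since three $m$-factors and one $u$-factor must be telescoped while preserving the structure needed for the weak argument in the $(u_n-\widetilde{u})$-piece. I would write
$$m_n\times(m_n\times u_n) - \widetilde{m}\times(\widetilde{m}\times\widetilde{u}) = (m_n - \widetilde{m})\times(m_n\times u_n) + \widetilde{m}\times\bigl[(m_n - \widetilde{m})\times u_n\bigr] + \widetilde{m}\times\bigl[\widetilde{m}\times(u_n - \widetilde{u})\bigr].$$
The first two pieces paired with $v$ go to zero by the same H\"older estimate as in (iii), using $\|\widetilde{m}\|_{L^\infty(\Omega_T)}<\infty$ and the uniform $L^2(0,T;L^4(\Omega))$-bound on $u_n$. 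For the last piece, applying the triple-product identity twice yields
$$\int_0^T\!\!\int_\Omega \bigl(\widetilde{m}\times[\widetilde{m}\times(u_n - \widetilde{u})]\bigr)\cdot v\,dx\,dt = \int_0^T\!\!\int_\Omega (u_n - \widetilde{u})\cdot\bigl[(v\times\widetilde{m})\times\widetilde{m}\bigr]\,dx\,dt,$$
and the bracketed integrand on the right is again in $L^2(0,T;L^2(\Omega))$, so weak convergence finishes the job.
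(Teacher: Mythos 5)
Your proposal is correct and follows essentially the same route as the paper: items (i)--(ii) are reduced to Lemma \ref{lem1}, and for (iii)--(iv) you use exactly the paper's telescoping decompositions, handling the $m_n-\widetilde{m}$ pieces by H\"older plus strong convergence and the $u_n-\widetilde{u}$ pieces by shifting that factor onto the test function via the scalar triple product (noting $(v\times\widetilde{m})\times\widetilde{m}=\widetilde{m}\times(\widetilde{m}\times v)$, your identity matches the paper's) and invoking weak convergence against an $L^2(0,T;L^2(\Omega))$ multiplier.
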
	
	The proof of Lemma \ref{lem2} is given after the completion of this theorem.\\ \\
	By taking $n\to \infty$ in \eqref{EOPME}, invoking \eqref{P2} and Lemma \ref{lem2}, we obtain that $(\widetilde{m},\widetilde{u})$ satisfies the system \eqref{EP} in $L^2(0,T;L^2(\Omega))$.

	Also, since for any $\left\{ (m_n,u_n)\right\} \subset \mathcal{A}$ with $m_n\rightharpoonup \widetilde{m}$ weakly in $L^2(0,T;L^2(\Omega)),$ $m_n(\cdot,T)\rightharpoonup \widetilde{m}(\cdot,T)$ weakly in $L^2(\Omega)$ and $u_n \rightharpoonup \widetilde{u}$ weakly in $L^2(0,T;H^1(\Omega))$, the functional $\mathcal J(\cdot,\cdot)$ is lower semi-continuous, that is,
	$$\mathcal J(\widetilde{m},\widetilde{u}) \leq \liminf_{n \to \infty} \mathcal J(m_n,u_n) < +\infty,$$
	whence $(\widetilde{m},\widetilde{u})$ is an admissible pair. As  $\left\{(m_n,u_n)\right\}$ is a minimizing sequence, we have
	\begin{equation}\label{I2}
		\mathcal J(\widetilde{m},\widetilde{u}) \leq \lim_{n \to \infty} \inf \mathcal J(m_n,u_n)=\lim_{n \to \infty} \mathcal J(m_n,u_n)=\alpha.
	\end{equation}
	Since $\alpha$ is the infimum of the functional $\mathcal J$ over $\mathcal{A}$, $\alpha \leq \mathcal J(\widetilde{m},\widetilde{u})$, and hence combining with \eqref{I2}, we get 
	$\displaystyle{\mathcal J(\widetilde{m},\widetilde{u}) = \alpha = \inf_{(m,u) \in \mathcal{A}} \mathcal J(m,u).}$
	This completes the proof.
\end{proof}
\begin{Rem}
Note that Theorem \ref{EOC} only shows the existence of a globally optimal control, but uniqueness of optimal control for (OCP) may not be possible since it is a non-convex optimal control problem. Therefore, it is possible that the (OCP) has more than one local or global optimal controls.
\end{Rem}
\begin{proof}[Proof of Lemma \ref{lem2}]
	The proof of (i) and (ii) are same as that of (i) and (ii) in Lemma \ref{lem1}. We shall prove (iii) and (iv). For the convergence of (iii), using  H\"older's inequality, continuous embedding $H^1(\Omega) \hookrightarrow L^4(\Omega)$ and using vector product property $v\cdot(\widetilde{m}\times (u_n -\widetilde{u}))=( v \times \widetilde{m}) \cdot (u_n-\widetilde{u})$, we get
	\begin{eqnarray*}
		\lefteqn{\int_0^T \Big[\big(m_n\times  u_n,v\big)-\big(\widetilde{m}\times \widetilde{u},v\big)\Big]\ dt= \int_0^T \Big[\big((m_n-\widetilde{m}) \times u_n,v\big) + \big(\widetilde{m} \times (u_n - \widetilde{u}),v\big)\Big]\ dt}\\
		&\leq&  \int_0^T \|m_n(t)-\widetilde{m}(t)\|_{L^4(\Omega)} \|u_n(t)\|_{L^4(\Omega)} \|v(t)\|_{L^2(\Omega)}\ dt   + \int_0^T \big(u_n-\widetilde{u},v \times \widetilde{m}\big)\ dt\\
		&\leq& C\ \|m_n-\widetilde{m}\|_{L^\infty(0,T;H^1(\Omega))} \|u_n\|_{L^2(0,T;H^1(\Omega))} \|v\|_{L^2(0,T;L^2(\Omega))}  + \int_0^T \big(u_n-\widetilde{u},v \times \widetilde{m}\big)\ dt\ \to 0, \ \mbox{as} \ n \to \infty.
	\end{eqnarray*}
The first term on the right-hand side of the above inequality tends to $0$ as $m_n\to\widetilde{m}$ strongly in $L^\infty(0,T;H^1(\Omega))$. Furthermore, since	$$\int_0^T \|v(t) \times \widetilde{m}(t)\|^2_{L^2(\Omega)} dt \leq \int_0^T \|\widetilde{m}(t)\|^2_{L^\infty(\Omega)} \|v(t)\|^2_{L^2(\Omega)}dt \leq C\  \|\widetilde{m}\|^2_{L^\infty(0,T;H^2(\Omega))}\|v\|^2_{L^2(0,T;L^2(\Omega))} <+ \infty,$$ and $u_n \rightharpoonup \widetilde{u}$ weakly in $L^2(0,T;L^2(\Omega))$, we have $\int_0^T \big(u_n-\widetilde{u}, v \times \widetilde{m}\big)\ dt \to 0$ as $n \to \infty$.  
	
Now, for the convergence of (iv), we proceed as follows:
	\begin{eqnarray*}
		\lefteqn{\int_0^T \big[\big(m_n\times (m_n \times u_n),v\big)-\big(\widetilde{m} \times (\widetilde{m} \times \widetilde{u}),v\big)\big]\ dt}\\
		&=&	 \int_0^T \Big[\big((m_n - \widetilde{m})\times (m_n \times u_n),v\big) + \big(\widetilde{m} \times ((m_n - \widetilde{m})\times u_n),v\big)+ \big(\widetilde{m} \times (\widetilde{m} \times (u_n-\widetilde{u})),v\big)\Big]\ dt\\
		&\leq& \int_0^T \|m_n(t)-\widetilde{m}(t)\|_{L^4(\Omega)} \|m_n(t)\|_{L^\infty(\Omega)} \|u_n(t)\|_{L^4(\Omega)} \|v(t)\|_{L^2(\Omega)}dt\\
		&& + \int_0^T \|\widetilde{m}(t)\|_{L^\infty} \|m_n(t)-\widetilde{m}(t)\|_{L^4(\Omega)} \|u_n(t)\|_{L^4(\Omega)} \|v(t)\|_{L^2(\Omega)} dt + \int_0^T \big(\widetilde{m} \times (\widetilde{m} \times (u_n-\widetilde{u})),v\big) \ dt\\
		&\leq& C\ \|m_n-\widetilde{m}\|_{L^\infty(0,T;H^1(\Omega))} \|m_n\|_{L^\infty(0,T;H^2(\Omega))} \|u_n\|_{L^2(0,T;H^1(\Omega))} \|v\|_{L^2(0,T;L^2(\Omega))}\\
		&&+ \int_0^T \big(u_n-\widetilde{u},\widetilde{m} \times (\widetilde{m} \times v)\big) \ dt \ \ \to 0,  \ \mbox{as} \ n \to \infty.
	\end{eqnarray*}
Indeed, the last integral follows from  the vector identity 
$$ (\widetilde{m} \times (\widetilde{m} \times (u_n-\widetilde{u})))\cdot v=-(\widetilde{m}\times v)\cdot (\widetilde{m}\times (u_n-\widetilde{u}))= (\widetilde{m} \times (\widetilde{m} \times v)) \cdot (u_n -\widetilde{u})$$
 and  for the convergence of this integral, we used  again the fact that
 $$ \int_0^T \|\widetilde{m}(t) \times \big(\widetilde{m}(t) \times v(t)\big)\|^2_{L^2(\Omega)} dt  \leq C\   \|\widetilde{m}\|^4_{L^\infty(0,T;H^2(\Omega))} \|v\|^2_{L^2(0,T;L^2(\Omega))}<+\infty,$$
to conclude $\int_0^T \big(u_n-\widetilde{u},\widetilde{m} \times (\widetilde{m} \times v)\big)\ dt \to 0$ as $n\to \infty.$  Hence the proof.
\end{proof} 
\section{First-Order Optimality Conditions}\label{SOC}
It is evident from Theorem \ref{GLOBAL} that the existence of a unique regular solution of \eqref{NLP} is proved when the control $u \in \mathcal{U}_{ad}$, where $\mathcal{U}_{ad}$ is a closed and bounded set in $L^2(0,T;H^1(\Omega))$. In this section, we prove the Fr\'echet derivative of a control-to-state operator which is merely defined on an open subset of $L^2(0,T;H^1(\Omega))$.

Let us consider the set
$$\mathcal{U}_R:= \left\{u\in L^2(0,T;H^1(\Omega))\ \big|\  \|u\|_{L^2(0,T;L^2(\Omega))}<R\right\}.$$ 
In view of estimate \eqref{MCOC}, it is clear that if $\|\nabla m_0\|^2_{L^2(\Omega)}+\|u\|^2_{L^2(0,T;L^2(\Omega))}< \frac{1}{8C^*}$, then Theorem \ref{GLOBAL} still holds true. The constant $R$ in $\mathcal{U}_R$ can be chosen as $R=\Big(\frac{1}{8C^*}-\|\nabla m_0\|^2_{L^2(\Omega)}\Big)^{1/2}.$  Moreover, $\mathcal{U}_R$ is an open ball in $L^2(0,T;H^1(\Omega))$ containing $\mathcal{U}_{ad}$.

We study the control-to-state operator $G:\mathcal{U}_R \to W^{1,2}(0,T;H^3(\Omega),H^1(\Omega))$ defined by $G(u)=m$.	
 To derive the Fr\'echet differentiability of this operator with respect to the control, we need to study the linearized system associated with \eqref{EP}. For arbitrary, but fixed $u \in \mathcal{U}_{R}$, let $m$ be the unique regular solution of \eqref{EP}. Consider the linearized system  given by
\begin{equation}\label{LS1}
	\begin{cases}
		v_t -  \Delta v - 2 m(\nabla m \cdot \nabla v)-  |\nabla m|^2v -  v \times \Delta m-  m \times \Delta v \\
		\hspace{0.5in}  -  v \times u +  v \times (m \times u) +  m \times (v \times u)=g\ \  \ \ \text{in} \ \ \ \Omega_T,\\ 	
		\frac{\partial v}{\partial \eta}=0 \ \ \ \ \ \  \text{on}  \  \ \ \partial \Omega_T,\\
		v(0)=v_0 \ \ \ \text{in} \ \ \ \Omega,
	\end{cases}
\end{equation}
where $g$ is any function in $L^2(0,T;H^1(\Omega))$ and $v_0 \in H^2(\Omega)$ satisfying $\frac{\partial v_0}{\partial \eta}=0$.

\begin{Thm}\label{TLS}
	Let $(m,u) \in W^{1,2}\big(0,T;H^3(\Omega),H^1(\Omega)\big)\ \times \  \mathcal{U}_{R}$ be the regular solution of \eqref{NLP}.	Then for any given $(g,v_0) \in L^2(0,T;H^1(\Omega))\ \times \  H^2(\Omega)$, the linearized system \eqref{LS1} has a unique regular solution $v \in W^{1,2}(0,T;H^3(\Omega),H^1(\Omega))$, which satisfies the following estimation:
	\begin{eqnarray*}
		\|v\|{_\mathcal M} &\leq& C \left(\|v_0\|_{H^2(\Omega)} + \ \|g\|_{L^2(0,T;H^1(\Omega))}\right)\\
		&&\times \exp\Big\{C  \left(T+\|m\|^2_{L^\infty(0,T;H^2(\Omega))}\big(\|m\|^2_{L^2(0,T;H^3(\Omega))}+ \|u\|^2_{L^2(0,T;H^1(\Omega))}\big)\right) \Big\}.
	\end{eqnarray*}
\end{Thm}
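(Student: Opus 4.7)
The plan is to follow the Galerkin scheme used in the proof of Theorem \ref{LOCAL}, exploiting the fact that \eqref{LS1} is linear in $v$ so that global-in-time estimates close via Gronwall's inequality without any smallness assumption. With the same orthonormal basis $\{\xi_k\}$ of eigenfunctions of $-\Delta+I$ and $W_n=\mathrm{span}\{\xi_1,\dots,\xi_n\}$, let $v_n\in W_n$ solve the projected system obtained by replacing $v$ by $v_n$ in \eqref{LS1} and applying $\mathbb{P}_n$, with initial data $v_n(0)=\mathbb{P}_n v_0$. Since the coefficients $m,\nabla m,\Delta m,u$ lie in function spaces ensuring the required integrability in $t$ by Theorem \ref{GLOBAL}, the resulting linear ODE for the Fourier coefficients of $v_n$ has a solution on the whole interval $[0,T]$ by Carath\'eodory theory.

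For the $L^2$ estimate, I would take inner product with $v_n$ and use the identities from Lemma \ref{CPP}: $(v_n\times\Delta m)\cdot v_n = 0$ and $(v_n\times u)\cdot v_n = 0$ vanish pointwise, while integration by parts yields
\begin{equation*}
\int_\Omega (m\times\Delta v_n)\cdot v_n\,dx = -\int_\Omega(\nabla m\times\nabla v_n)\cdot v_n\,dx,
\end{equation*}
since $\partial v_n/\partial\eta = 0$. Controlling the remaining terms by H\"older's inequality together with the embeddings $H^1(\Omega)\hookrightarrow L^4(\Omega)$ and $H^2(\Omega)\hookrightarrow L^\infty(\Omega)$, and absorbing a fraction of $\|\nabla v_n\|^2_{L^2(\Omega)}$ on the left via Young's inequality, one arrives at
\begin{equation*}
\frac{d}{dt}\|v_n(t)\|^2_{L^2(\Omega)} + \|\nabla v_n(t)\|^2_{L^2(\Omega)} \leq C\,a(t)\,\|v_n(t)\|^2_{L^2(\Omega)} + C\,\|g(t)\|^2_{L^2(\Omega)},
\end{equation*}
with $a(t)=1+\|m(t)\|^2_{H^3(\Omega)}+\|u(t)\|^2_{H^1(\Omega)}$ integrable on $[0,T]$ by \eqref{SSE}. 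Gronwall's inequality then yields uniform bounds for $v_n$ in $L^\infty(0,T;L^2(\Omega))\cap L^2(0,T;H^1(\Omega))$.

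For the higher-order estimate, I would test the Galerkin equation against $\Delta^2 v_n$ and integrate by parts (the boundary contributions vanish thanks to the Neumann conditions on both $m$ and $v_n$), producing $\|\nabla\Delta v_n\|^2_{L^2(\Omega)}$ as the coercive quantity on the left. The terms on the right coupling second derivatives of $v_n$ with $\nabla m$ or $\Delta m$, as well as those involving $u$ and $\nabla u$, would be bounded in the same manner as the quantities $E_1$--$E_4$ in Theorem \ref{LOCAL}, using H\"older's inequality together with the Gagliardo--Nirenberg inequalities \eqref{ES5}--\eqref{ES8}, the interpolation estimate \eqref{ES9} and the product inequality \eqref{AIN}. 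Choosing the Young parameter small enough to absorb the coercive term on the left, one obtains
\begin{equation*}
\frac{d}{dt}\|\Delta v_n(t)\|^2_{L^2(\Omega)} + \|\nabla\Delta v_n(t)\|^2_{L^2(\Omega)} \leq C\,b(t)\,\bigl(\|v_n(t)\|^2_{L^2(\Omega)}+\|\Delta v_n(t)\|^2_{L^2(\Omega)}\bigr) + C\,\|g(t)\|^2_{H^1(\Omega)},
\end{equation*}
with $b(t)$ again integrable on $[0,T]$ by \eqref{SSE}. A second Gronwall argument then closes the $H^2$/$H^3$ estimate in exactly the quantitative form stated.

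With $v_n$ uniformly bounded in $L^\infty(0,T;H^2(\Omega))\cap L^2(0,T;H^3(\Omega))$, the Galerkin equation itself and the product inequality \eqref{AIN} give $(v_n)_t$ uniformly bounded in $L^2(0,T;H^1(\Omega))$; passage to the limit then follows from Banach--Alaoglu compactness and the Aubin--Lions--Simon lemma exactly as in the proof of Theorem \ref{LOCAL}, but significantly simpler because the dependence on the unknown is now linear. Uniqueness is immediate: the difference of two solutions satisfies \eqref{LS1} with $g\equiv 0$ and $v_0\equiv 0$, whence the $L^2$ energy estimate forces it to vanish. The main technical obstacle I anticipate is the $H^3$ estimate, where both $v_n\times\Delta m$ and $m\times\Delta v_n$ place second-order derivatives against second-order derivatives; only by splitting $\|\nabla\Delta v_n\|^2_{L^2(\Omega)}$ carefully among several Young absorptions, in the style of the treatment of $E_1$ and $E_2$ in Theorem \ref{LOCAL}, can the scheme close without any smallness assumption on $m$ or $u$ and produce the exponential estimate stated.
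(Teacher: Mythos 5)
Your proposal is correct and follows essentially the same route as the paper: the identical Galerkin scheme on the eigenbasis of $-\Delta+I$, global solvability of the linear ODE system, an $L^2$ energy estimate killing the cross-product terms via Lemma \ref{CPP}, a second estimate obtained by testing against $\Delta^2 v_n$ and absorbing $\|\nabla\Delta v_n\|^2_{L^2(\Omega)}$ by Young's inequality, Gronwall with coefficients integrable thanks to \eqref{SSE}, the $H^1$ bound on $(v_n)_t$ from the equation, Aubin--Lions--Simon compactness, and uniqueness from the $L^2$ estimate with $g\equiv 0$, $v_0\equiv 0$. The only cosmetic difference is that the paper adds the two differential inequalities and applies Gronwall once to the combined quantity $\|v_n\|^2_{L^2(\Omega)}+\|\Delta v_n\|^2_{L^2(\Omega)}$, which is equivalent to your two-step argument.
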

\begin{proof}[Proof:]
	We employ the  Galerkin approximation construction used in Theorem \ref{LOCAL} to write that of the following for \eqref{LS1}. Let $\{\xi_i\}_{i=1}^{\infty}$ be an orthonormal basis of $L^2(\Omega)$ consisting of eigenvectors for  $-\Delta+I$  with vanishing Neumann boundary condition. Suppose $W_n=\text{span}\{w_1,w_2,...,w_n\}$ and $\mathbb{P}_n:L^2\to W_n$ be the orthogonal projection. Consider the Galerkin system 
	\begin{equation}\label{LSGA}
		\begin{cases}
			(v_n)_t -  \Delta v_n -\mathbb{P}_n\Big[ 2 m(\nabla m \cdot \nabla v_n)+  |\nabla m|^2v_n +  v_n \times \Delta m +  m \times \Delta v_n\\
			\hspace{1in}  +  v_n \times u -  v_n \times (m \times u) -  m \times (v_n \times u)\Big]=\mathbb{P}_n\big(g\big)\ \ \text{in} \ \Omega_T,\\ 	
			v_n(0)=\mathbb{P}_n(v_0) \ \ \ \text{in} \ \Omega,
		\end{cases}
	\end{equation}
where $v_n=\sum_{k=1}^{n}c_{kn}(t) \xi_k$ and $\mathbb{P}_n(v_0)=\sum_{k=1}^{n} d_{kn}\xi_k$. Repeating the similar argument of Theorem \ref{LOCAL}, we can show that \eqref{LSGA} is equivalent to a system of $n$ linear ordinary differential equations in $n$ unknowns 
$c_{n}(t)=(c_{1n}(t),c_{2n}(t),\cdots,c_{nn}(t))^T$. By Theorem \ref{GLOBAL} and the assumption on $u \in L^2(0,T;H^1(\Omega))$, theory of ODEs yield a unique solution $c_n(t)$ on $[0,T]$ for each $n\in \mathbb{N}$. Hence the approximated Galerkin system \eqref{LSGA} has a unique solution $v_n\in C^1([0,T];W_n)$ on $\Omega \times [0,T]$.

By taking the $L^2$ inner product of \eqref{LSGA} with $v_n$ and using Lemma \ref{CPP}, we have
	\begin{eqnarray}
	\lefteqn{\frac{1}{2} \frac{d}{dt} \|v_n(t)\|^2_{L^2(\Omega)}+ \int_\Omega |\nabla v_n(t)|^2 dx = 2  \int_\Omega m\ (\nabla m \cdot \nabla v_n)\cdot v_n \ dx}\nonumber\\
	&&+\int_\Omega  |\nabla m|^2 |v_n|^2 dx +\int_\Omega   (m \times \Delta v_n)\cdot v_n\ dx - \int_\Omega  (m \times (v_n \times u))\cdot v_n\ dx +\int_\Omega  g\cdot v_n\ dx.\nonumber 
	\end{eqnarray}
	Using Young's inequality and the embeddings $H^1(\Omega) \hookrightarrow L^4(\Omega)$, $H^2(\Omega) \hookrightarrow L^\infty(\Omega)$, we obtain
	\begin{equation}\label{LSL2}
		\frac{1}{2} \frac{d}{dt} \|v_n(t)\|^2_{L^2(\Omega)}+  \frac{1}{2} \int_\Omega |\nabla v_n(t)|^2 dx  \leq \|g(t)\|^2_{L^2(\Omega)} + C \left(1+\|m(t)\|^2_{H^3(\Omega)} + \|u(t)\|^2_{H^1(\Omega)}\right)\ \|v_n(t)\|^2_{L^2(\Omega)}. 
	\end{equation}
	Taking $L^2$ inner product of \eqref{LSGA} with $\Delta^2v_n$, we have
	\begin{eqnarray}\label{LSH2E}
		\lefteqn{\frac{1}{2} \frac{d}{dt} \|\Delta v_n(t)\|^2_{L^2(\Omega)} +  \int_\Omega |\nabla \Delta v_n(t)|^2 dx = 2 \int_\Omega m(\nabla m \cdot \nabla v_n)\ \Delta^2 v_n\ dx}\nonumber\\
		&&+\ \int_\Omega |\nabla m|^2v_n\ \Delta^2v_n\ dx+ \int_\Omega (v_n \times \Delta m)\ \Delta^2v_n\ dx + \int_\Omega (m \times \Delta v_n) \ \Delta^2v_n\ dx+  \int_\Omega (v_n \times u)\ \Delta^2v_n\ dx\nonumber\\
		&&-\  \int_\Omega (v_n \times (m \times u))\ \Delta^2v_n\ dx - \int_\Omega (m \times (v_n \times u))\ \Delta^2v_n\ dx + \int_\Omega g\ \Delta^2v_n\ dx:= \sum_{\Gamma=1}^{8}\Gamma_i. 
	\end{eqnarray}
	Let us estimate the terms on the right hand side. For the first term $\Gamma_1$, doing an integration by parts, applying H\"older's inequality and the embeddings $H^1(\Omega) \hookrightarrow L^p(\Omega)\ \text{for} \ p \in [1,\infty]$ and $H^2(\Omega) \hookrightarrow L^\infty(\Omega)$, we get 
	\begin{flalign*}
	\Gamma_1 &=-2 \int_\Omega \left[\nabla m(\nabla m \cdot \nabla v_n)+m(D^2m\cdot\nabla v_n)+m(\nabla m\cdot D^2v_n)\right]\nabla \Delta v_n\ dx&\\
	&\leq \ 2\ \|\nabla m(t)\|_{L^4(\Omega)} \|\nabla m(t)\|_{L^8(\Omega)}  \|\nabla v_n(t)\|_{L^8(\Omega)} \|\nabla \Delta v_n(t)\|_{L^2(\Omega)}\\
	&+\ 2\ \|m(t)\|_{L^\infty(\Omega)} \left(\|D^2m(t)\|_{L^4(\Omega)} \|\nabla v_n(t)\|_{L^4(\Omega)}+\|\nabla m(t)\|_{L^\infty(\Omega)} \|D^2v_n(t)\|_{L^2(\Omega)} \right) \|\nabla \Delta v_n(t)\|_{L^2(\Omega)} \\
	&\leq C\ \|\nabla m(t)\|_{H^1(\Omega)} \|\nabla m(t)\|_{H^1(\Omega)}  \|\nabla v_n(t)\|_{H^1(\Omega)} \|\nabla \Delta v_n(t)\|_{L^2(\Omega)}\\
	&+\ C\ \|m(t)\|_{H^2(\Omega)}\left( \|D^2m(t)\|_{H^1(\Omega)} \|\nabla v_n(t)\|_{H^1(\Omega)}+\|\nabla m(t)\|_{H^2(\Omega)} \|D^2v_n(t)\|_{L^2(\Omega)}\right) \|\nabla \Delta v_n(t)\|_{L^2(\Omega)} \\
	&\leq \epsilon   \int_\Omega |\nabla \Delta v_n(t)|^2 dx +\ C(\epsilon)   \ \left(\|m(t)\|^4_{H^2(\Omega)}+ \|m(t)\|^2_{H^2(\Omega)} \|m(t)\|^2_{H^3(\Omega)}\right) \|v_n(t)\|^2_{H^2(\Omega)}.	
	\end{flalign*}
By proceeding in a similar way for the terms  $\Gamma_2$ and $\Gamma_3$, we derive
\begin{flalign*}
	\Gamma_2 &\leq \epsilon   \int_\Omega |\nabla \Delta v_n(t)|^2 dx+\ C(\epsilon) \left(\|m(t)\|^4_{H^2(\Omega)}+ \|m(t)\|^2_{H^2(\Omega)} \|m(t)\|^2_{H^3(\Omega)}\right) \|v_n(t)\|^2_{H^2(\Omega)},&\\
	\Gamma_3 &\leq \epsilon   \int_\Omega |\nabla \Delta v_n(t)|^2 dx +\ C(\epsilon)\  \|m(t)\|^2_{H^3(\Omega)} \|v_n(t)\|^2_{H^2(\Omega)}.	
\end{flalign*}
For the term $\Gamma_4$ and $\Gamma_5$, doing an integration by parts, using the property $(a \times b)\cdot b=0$ and proceeding as above, one can get that 
\begin{flalign*}
	\Gamma_4 &=- \int_\Omega \big( \nabla m \times \Delta v_n \big) \ \nabla \Delta v_n\ dx
	\leq \ \|\nabla m(t)\|_{L^\infty(\Omega)} \|\Delta v_n(t)\|_{L^2(\Omega)} \|\nabla \Delta v_n(t)\|_{L^2(\Omega)} &\\
	& \leq  \epsilon   \int_\Omega |\nabla \Delta v_n(t)|^2 dx   +\ C(\epsilon)\  \|m(t)\|^2_{H^3(\Omega)} \|v_n(t)\|^2_{H^2(\Omega)}, \\
	\Gamma_5 &=- \int_\Omega \big[ \nabla v_n \times u+ v_n \times \nabla u\big] \ \nabla \Delta v_n\ dx &\\
	&\leq \ \|\nabla v_n(t)\|_{L^4(\Omega)} \|u(t)\|_{L^4(\Omega)} \|\nabla \Delta v_n(t)\|_{L^2(\Omega)}+ \|v_n(t)\|_{L^\infty(\Omega)} \|\nabla u(t)\|_{L^2(\Omega)} \|\nabla \Delta v_n(t)\|_{L^2(\Omega)}\\
	&\leq \epsilon \int_\Omega |\nabla \Delta v_n(t)|^2 dx +\ C(\epsilon)\  \|u(t)\|^2_{H^1(\Omega)} \|v_n(t)\|^2_{H^2(\Omega)}.
\end{flalign*}
For the term $\Gamma_6$, an integration by parts followed by H\"older's inequality and continuous embeddings $H^1(\Omega) \hookrightarrow L^4(\Omega)$, $H^1(\Omega) \hookrightarrow L^8(\Omega)$ and $H^2(\Omega) \hookrightarrow L^\infty(\Omega)$, we derive
\begin{flalign*}
	\Gamma_6 &= \int_\Omega \Big[ \nabla v_n \times (m \times u)+v_n \times (\nabla m \times u)+v_n \times (m \times \nabla u)\Big]\ \nabla \Delta v_n\ dx&\\
	&\leq  \|\nabla v_n(t)\|_{L^4(\Omega)} \|m(t)\|_{L^8(\Omega)}  \|u(t)\|_{L^8(\Omega)} \|\nabla \Delta v_n(t)\|_{L^2(\Omega)}\\
	&+\  \|v_n(t)\|_{L^\infty(\Omega)} \left(\|\nabla m(t)\|_{L^4(\Omega)} \|u(t)\|_{L^4(\Omega)}+\| m(t)\|_{L^\infty(\Omega)} \|\nabla u(t)\|_{L^2(\Omega)} \right) \|\nabla \Delta v_n(t)\|_{L^2(\Omega)} \\
	&\leq \epsilon \int_\Omega |\nabla \Delta v_n(t)|^2 \ dx  + C(\epsilon)   \ \|m(t)\|^2_{H^2(\Omega)} \|u(t)\|^2_{H^1(\Omega)} \|v_n(t)\|^2_{H^2(\Omega)}.
\end{flalign*}
We can obtain an estimate similar to $\Gamma_6$ for $\Gamma_7$ as well. By substituting all these estimates in equation \eqref{LSH2E} and choosing a suitable value for $\epsilon$ and adding with \eqref{LSL2}, we get
	\begin{eqnarray}\label{LSE1}
		\lefteqn{\frac{d}{dt} \left(\|v_n(t)\|^2_{L^2(\Omega)}+\|\Delta v_n(t)\|^2_{L^2(\Omega)}\right)+  \|\nabla v_n(t)\|^2_{L^2(\Omega)}+\|\nabla \Delta v_n(t)\|^2_{L^2(\Omega)} } \nonumber\\
		&&\leq C\   \|g(t)\|^2_{H^1(\Omega)}+ C\ \bigg[1+\|m(t)\|^2_{H^2(\Omega)} \|m(t)\|^2_{H^3(\Omega)}  + \|m(t)\|^2_{H^2(\Omega)} \|u(t)\|^2_{H^1(\Omega)}\bigg]\ \|v_n\|^2_{H^2(\Omega)}.
	\end{eqnarray}
By invoking Lemma \ref{EN}, $\|v_n\|_{H^2(\Omega)}\leq C\ \left(\|v_n\|_{L^2(\Omega)}+\|\Delta v_n\|_{L^2(\Omega)}\right)$ and applying Gronwall's inequality, we obtain the uniform bounds for $\{v_n\}$ in $L^\infty(0,T;H^2(\Omega)):$
	\begin{equation}\label{LSE2}
		\sup_{t \in [0,T]} \left(\|v_n(t)\|^2_{L^2(\Omega)}+\|\Delta v_n(t)\|^2_{L^2(\Omega)}\right)
		\leq C_{\#}(\Omega,T,v_0,g,m,u),	
	\end{equation}
	where
	\begin{eqnarray*}
		C_\#(\Omega,T,v_0,g,m,u) \!\!&=&\!\! C\left(\|v_0\|^2_{L^2(\Omega)}+\|\Delta v_0\|^2_{L^2(\Omega)} +  \|g\|^2_{L^2(0,T;H^1(\Omega))}\right)\\
		&&\times exp\bigg\{C  \left(T+\|m\|^2_{L^\infty(0,T;H^2(\Omega))}\|m\|^2_{L^2(0,T;H^3(\Omega))}+\|m\|^2_{L^\infty(0,T;H^2(\Omega))} \|u\|^2_{L^2(0,T;H^1(\Omega))}\right) \bigg\}.
	\end{eqnarray*}
By integrating \eqref{LSE1} over $(0,t)$ 
and employing the uniform bounds for $v_n$ in $L^\infty(0,T;H^2(\Omega))$ from inequality \eqref{LSE2}, we have
\begin{equation}\label{LSE3}	
	\int_0^T \left(\|\nabla v_n(\tau)\|^2_{L^2(\Omega)}+\|\nabla \Delta v_n(\tau)\|^2_{L^2(\Omega)}\right)\ d\tau \leq C_\#(\Omega,T,v_0,g,m,u). 
\end{equation}
Hence, from \eqref{LSE2} and \eqref{LSE3}, $\{v_n\}$ is uniformly bounded in $L^\infty(0,T;H^2(\Omega)) \cap L^2(0,T;H^3(\Omega))$. Taking $H^1$ norm of $\{(v_n)_t\}$ in equation \eqref{LSGA}, using the estimates from Section \ref{FS} and substituting the uniform bounds for $\{v_n\}$ from estimates \eqref{LSE2} and \eqref{LSE3}, we derive
	$$\int_0^T \left( \|(v_n)_t(\tau)\|^2_{L^2(\Omega)} + \| \nabla (v_n)_t(\tau)\|^2_{L^2(\Omega)}\right)\ d\tau \leq C_{\#}(\Omega,T,v_0,g,m,u).$$
	Therefore, we also get that $\{(v_n)_t\}$ is uniformly bounded in $L^2(0,T;H^1(\Omega))$. By Aloglu weak$^*$ compactness and reflexive weak compactness theorems (Theorem 4.18, \cite{JCR}), we have	
	\begin{eqnarray}\nonumber
		\left\{\begin{array}{cccll}
			v_n &\overset{w}{\rightharpoonup} & v \  &\mbox{weakly in}& \ L^2(0,T;H^3(\Omega)),\\
			v_n &\overset{w^\ast}{\rightharpoonup} & v \ &\mbox{weak$^*$ in}& \ L^{\infty}(0,T;H^2(\Omega)),\\
			(v_n)_t &\overset{w}{\rightharpoonup} & v_t \ &\mbox{weakly in}& \ L^2(0,T;H^1(\Omega)), \ \ \mbox{as} \  \ n\to \infty. 
		\end{array}\right.	
	\end{eqnarray}
	
	Again as a result of Aubin-Lions-Simon lemma (see, Corollary 4, \cite{JS}), we can get a sub-sequence of $\{v_n\}$ such that $v_n \overset{s}{\to} v$ strongly in $L^2(0,T;H^2(\Omega))$ and $C([0,T];H^1(\Omega))$. Using these strong convergence and results similar to Lemma \ref{lem1}, we can show $v$ is indeed a regular solution of \eqref{LS1}. Also, since the problem is linear, uniqueness can be directly shown by setting $v_1-v_2=w$, where $v_1$ and $v_2$ are regular solutions of \eqref{LS1} and deriving an $L^2$ estimate for $w$ as in \eqref{LSE1} followed by the application of Gronwall's inequality. Hence the proof.
\end{proof}

Before stating the optimality conditions satisfied by $(\widetilde{m},\widetilde{u})$, we analyze the differentiability of the control-to-state operator.
\begin{Pro}{(Fr\'echet differentiability of control-to-state map)}\label{CTSM} If system~\eqref{EP} has a regular solution for some $w \in \mathcal{U}_{R}$, then there exists an open neighbourhood $\mathcal{U}$ of $w$ in $\mathcal{U}_{R}$ such that for any $u \in \mathcal{U}$ and initial data $m_0$, we have a regular solution $m_u$ in $W^{1,2}(0,T;H^3(\Omega),H^1(\Omega))$. Also, the control-to-state map $G:\mathcal{U} \to W^{1,2}(0,T;H^3(\Omega),H^1(\Omega))$ defined by $G(u)=m_u$ is of class $C^\infty$.	Moreover, if $z=D G(u)\cdot h$, for some $u \in \mathcal{U}_R$ and some $h \in L^2(0,T;H^1(\Omega))$, then $z\in W^{1,2}(0,T;H^3(\Omega),H^1(\Omega))$ is a unique regular solution of the following linearized system: 
	\begin{equation}\label{LS2}
		\begin{cases}
			z_t -  \Delta z - 2 m(\nabla m \cdot \nabla z)-  |\nabla m|^2z -  z \times \Delta m-  m \times \Delta z -  z \times u\\
			\hspace{0.3in}  +\  z \times (m \times u) +  m \times (z \times u)- m \times h +  m \times (m \times h)=0 \ \  \ \text{in} \ \ \Omega_T,\\
			\frac{\partial z}{\partial \eta}=0 \ \ \ \ \text{on} \ \partial \Omega_T,\\
			z(0)=0 \ \ \text{in} \ \Omega.	
		\end{cases}
	\end{equation}
\end{Pro}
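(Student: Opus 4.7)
The strategy is to apply the implicit function theorem in Banach spaces. Define the map
\begin{equation*}
F(u,m) := \Bigl(m_t - \Delta m - |\nabla m|^2 m - m\times\Delta m - m\times u + m\times(m\times u),\; m(0) - m_0\Bigr),
\end{equation*}
viewed as $F:\mathcal{U}_R \times \mathcal{M} \to L^2(0,T;H^1(\Omega)) \times H^2(\Omega)$. Well-definedness of the first component in $L^2(0,T;H^1(\Omega))$ follows term by term from \eqref{AIN} together with the embeddings $\mathcal{M}\hookrightarrow L^\infty(0,T;H^2(\Omega))$ and $H^2(\Omega)\hookrightarrow L^\infty(\Omega)$, while the second is handled by the trace inclusion $\mathcal{M}\hookrightarrow C([0,T];H^2(\Omega))$. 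Because $F$ is polynomial in its variables (total degree at most three), it is of class $C^\infty$ between the stated Banach spaces.

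Next, the set $\mathcal{U}_R$ is open in $L^2(0,T;H^1(\Omega))$, since $u\mapsto\|u\|_{L^2(0,T;L^2(\Omega))}$ is continuous on $L^2(0,T;H^1(\Omega))$; hence a neighborhood $\mathcal{U}\subset\mathcal{U}_R$ of $w$ can be chosen. Theorem~\ref{GLOBAL} then supplies, for every $u\in\mathcal{U}$, a unique regular solution $m_u=:G(u)\in\mathcal{M}$ satisfying $F(u,G(u))=0$. A direct differentiation shows that the partial Fr\'echet derivative $D_mF(u,m)$ acts on $v\in\mathcal{M}$ as
\begin{equation*}
D_mF(u,m)\cdot v = \Bigl(v_t-\Delta v-2m(\nabla m\cdot\nabla v)-|\nabla m|^2v - v\times\Delta m - m\times\Delta v - v\times u + v\times(m\times u) + m\times(v\times u),\; v(0)\Bigr),
\end{equation*}
which is exactly the left-hand side operator of the linearized system \eqref{LS1} paired with the initial trace map. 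By Theorem~\ref{TLS}, this operator is a continuous linear bijection from $\mathcal{M}$ onto $L^2(0,T;H^1(\Omega)) \times H^2(\Omega)$ (the quantitative estimate in that theorem furnishes continuity of the inverse); hence $D_mF(u,G(u))$ is a toplinear isomorphism.

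The implicit function theorem in Banach spaces now yields, after possibly shrinking $\mathcal{U}$, that $G:\mathcal{U}\to\mathcal{M}$ is of class $C^\infty$. To identify $DG(u)\cdot h$, differentiate the identity $F(u,G(u))=0$ in $u$ to obtain
\begin{equation*}
D_uF(u,G(u))\cdot h + D_mF(u,G(u))\cdot(DG(u)\cdot h) = 0.
\end{equation*}
A direct calculation gives $D_uF(u,m)\cdot h = \bigl(-m\times h + m\times(m\times h),\,0\bigr)$. Setting $z:=DG(u)\cdot h$ and rearranging, $z$ is found to satisfy precisely the system \eqref{LS2}; uniqueness is inherited from Theorem~\ref{TLS} (applied with $v_0=0$ and the appropriate right-hand side).

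The principal obstacle is the verification that $D_mF(u,G(u))$ is an isomorphism, that is, the global-in-time solvability and the a priori $\mathcal{M}$-bound for the linearized equation. This is exactly the content of Theorem~\ref{TLS}, whose proof relies essentially on the smallness encoded in $\mathcal{U}_R$ (via Theorem~\ref{GLOBAL}) to tame the critical terms $|\nabla m|^2 v$ and $m(\nabla m\cdot\nabla v)$ through the 2D Gagliardo–Nirenberg estimates collected in Section~\ref{FS}. The $C^\infty$ regularity of $F$, in contrast, reduces to routine polynomial bookkeeping once the product inequality \eqref{AIN} is invoked.
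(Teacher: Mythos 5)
Your proposal is correct and follows essentially the same route as the paper: both define the residual map $F$ on $\mathcal{M}\times\mathcal{U}_R$, verify its $C^\infty$ (Fr\'echet) smoothness via the product estimate \eqref{AIN}, identify $D_mF$ with the linearized operator of \eqref{LS1} whose invertibility is supplied by Theorem \ref{TLS}, and conclude via the implicit function theorem together with the computation $D_uF\cdot h=(-m\times h+m\times(m\times h),0)$. The only cosmetic difference is that you obtain the regular solutions $m_u$ for $u$ near $w$ directly from Theorem \ref{GLOBAL} (valid for the specific $\mathcal{U}_R$), whereas the paper extracts them from the implicit function theorem itself, which is what makes its subsequent remark about enlarging $R$ work.
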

\begin{proof}[Proof:]
 Consider a map $F:\mathcal M \times \mathcal U_R \to L^2(0,T;H^1(\Omega)) \times H^2(\Omega)$ 
defined by $$ F(m,u)= \Big(	m_t -  \Delta m -  |\nabla m|^2 m -  m \times \Delta m -  m \times u +  m \times (m \times u), \ m(0)-m_0\Big),$$ where we recall that $\mathcal M=W^{1,2}(0,T;H^3(\Omega),H^1(\Omega)).$
Before going  to prove the Fr\'echet differentiability of the control-to-state operator $G,$  we need to prove that of the map $F.$   The mapping 
$(m,u) \longmapsto \left( m_t-\Delta m, m(0) \right)$
is linear and bounded from $ \mathcal M \times \mathcal U_R  \to L^2(0,T;H^1(\Omega)) \times H^2(\Omega)$.

In order to obtain the Fr\'echet differentiability of the other nonlinear terms $|\nabla m|^2 m, \  m \times \Delta m, \   m \times u$ and $ m \times (m \times u)$, we estimate each terms as follows. By applying H\"older's inequality and the embeddings $ H^1(\Omega) \hookrightarrow L^4(\Omega),H^2(\Omega) \hookrightarrow L^\infty(\Omega),$  we get
\begin{flalign*}
	F_1(m,\varphi,u,\theta) &= \left\| |\nabla (m+\varphi)|^2 (m+\varphi) - |\nabla m|^2\ m - |\nabla m|^2\ \varphi -2m\ (\nabla m\cdot  \nabla \varphi) \right\|_{L^2(0,T;L^2(\Omega))}&\\
	&=	\left\| m\ |\nabla \varphi|^2 + \varphi\ |\nabla \varphi|^2 +2\varphi\  (\nabla m\cdot\nabla \varphi)\right\|_{L^2(0,T;L^2(\Omega))}\\
	&\leq C\  \|m\|_{L^\infty(0,T;H^2(\Omega))} \|\varphi\|_{L^\infty(0,T;H^2(\Omega))} \|\varphi\|_{L^2(0,T;H^2(\Omega))}+C \ \|\varphi\|^2_{L^\infty(0,T;H^2(\Omega))} \|\varphi\|_{L^2(0,T;H^2(\Omega))}\\
	&\leq C\  \|m\|_{L^\infty(0,T;H^2(\Omega))}  \|(\varphi,\theta)\|^2_{\mathcal{M}\times L^2(0,T;H^1(\Omega))}+C\ \|(\varphi,\theta)\|^3_{\mathcal{M}\times L^2(0,T;H^1(\Omega))},
\end{flalign*}
\begin{flalign*}
	F_2(m,\varphi,u,\theta) &= \left\| \ \big( (m +\varphi)\times \Delta (m+\varphi)\big) -m \times \Delta m - m \times \Delta \varphi- \varphi \times \Delta m\right\|_{L^2(0,T;L^2(\Omega))}&\\
	 &= \left\| \varphi \times \Delta \varphi \right\|_{L^2(0,T;L^2(\Omega))} \leq C\  \|\varphi\|_{L^\infty(0,T;H^2(\Omega))} \|\varphi\|_{L^2(0,T;H^2(\Omega))}\leq C\ \|(\varphi,\theta)\|^2_{\mathcal{M}\times L^2(0,T;H^1(\Omega))}.
\end{flalign*}
For the control terms, we  obtain that
\begin{flalign*}
	F_3(m,\varphi,u,\theta) &=\left\| \ \big( (m + \varphi)\times (u+\theta)\big)- m\times u -\varphi \times u -m \times \theta \right\|_{L^2(0,T;L^2(\Omega))}&\\
	&= \left\| \varphi \times \theta \right\|_{L^2(0,T;L^2(\Omega))} \leq C\ \|\varphi\|_{L^\infty(0,T;H^1(\Omega))} \|\theta\|_{L^2(0,T;H^1(\Omega))}\leq C\ \|(\varphi,\theta)\|^2_{\mathcal{M}\times L^2(0,T;H^1(\Omega))}
\end{flalign*}
and
\begin{flalign*}
	F_4(m,\varphi,u,\theta) &= \left\| (m+\varphi) \times \big( (m+\varphi) \times (u+\theta)\big) - m \times (m \times u) -\varphi \times (m \times u)\right.&\\
	& \hspace{.5in}\left.- m \times (\varphi \times u) -m \times (m \times \theta)\right\|_{L^2(0,T;L^2(\Omega))}\\
	 &= \left\| m \times (\varphi \times \theta) + \varphi \times (m \times \theta) + \varphi \times (\varphi \times u) + \varphi \times (\varphi \times \theta) \right\|_{L^2(0,T;L^2(\Omega))}\\
	& \leq C\ \|\varphi\|_{L^\infty(0,T;H^2(\Omega))} \|\varphi\|_{L^\infty(0,T;H^1(\Omega))}\left(\|u\|_{L^2(0,T;H^1(\Omega))} +  \|\theta\|_{L^2(0,T;H^1(\Omega))}\right)\\
	& \ \ \ \ \ \ +C\ \|m\|_{L^\infty(0,T;H^2(\Omega))} \|\varphi\|_{L^\infty(0,T;H^1(\Omega))} \|\theta\|_{L^2(0,T;H^1(\Omega))}\\
	&\leq C\ \|m\|_{L^\infty(0,T;H^2(\Omega))}  \|(\varphi,\theta)\|^2_{\mathcal{M}\times L^2(0,T;H^1(\Omega))}+C\ \|(\varphi,\theta)\|^3_{\mathcal{M}\times L^2(0,T;H^1(\Omega))}\\
	&\ \ \ \  \ \ +C\ \|u\|_{L^2(0,T;H^1(\Omega))}  \|(\varphi,\theta)\|^2_{\mathcal{M}\times L^2(0,T;H^1(\Omega))}.
\end{flalign*}
Now, dividing each $F_i(\cdot)$ by $\|(\varphi,\theta)\|_{\mathcal{M}\times L^2(0,T;H^1(\Omega))},$ we can directly see that $$\frac{F_i(m,\varphi,u,\theta)}{\|(\varphi,\theta)\|_{\mathcal{M}\times L^2(0,T;H^1(\Omega))}} \to 0 \ \  \mbox{as} \ \  \|(\varphi,\theta)\|_{\mathcal{M}\times L^2(0,T;H^1(\Omega))} \to 0, \ i=1,\cdots, 4.$$
Therefore, $F(\cdot,\cdot)$ is Fr\'echet differentiable on $\mathcal M \times \mathcal U_R.$ In fact, we can show that $F$ is of class $C^\infty$. Moreover, 
\begin{align*}
	&\frac{\partial F}{\partial m} (m,u)\cdot v= \Big(v_t -  \Delta v - 2 m(\nabla m \cdot \nabla v)-  |\nabla m|^2v -  v \times \Delta m\nonumber\\ 
	&\hspace{1in}-  m \times \Delta v -  v \times u +  v \times (m \times u)	+  m \times (v \times u), v(0) \Big).
\end{align*}
It is evident that $\frac{\partial F}{\partial m} (m,u)\cdot v =(g,v_0)$ with $(g,v_0)\in L^2(0,T;H^1(\Omega))\times H^2(\Omega), \frac{\partial v_0}{\partial \eta}=0$ if and only if $v$ solves the system \eqref{LS1}.

From the existence and uniqueness result (Theorem \ref{TLS}) of the linearized system \eqref{LS1}, it is clear that $\frac{\partial F}{\partial m}(m_u,u)$ is an isomorphism from $\mathcal M$ onto $L^2(0,T;H^1(\Omega)) \times H^2(\Omega)$ for every $(m,u)\in \mathcal M\times \mathcal U_{R}.$  If $m_w$ is a regular solution of the system \eqref{NLP} corresponding to the control $w\in\mathcal U_R,$ we have $F(m_w,w)=(0,0).$ Applying the implicit function theorem, we deduce that there exists an open neighborhood $\mathcal{U} \subset \mathcal U_R$ of $w$ and a mapping $G:\mathcal{U} \to \mathcal M$ defined by $G(u)=m_u$ such that 
$$F(G(u),u)=(0,0) \ \ \ \text{for every}\  u \in \mathcal{U},$$
and $G$ is of class $C^\infty$. Hence, taking Gateaux derivative of $F$ with respect to $u$ using chain rule, we obtain
\begin{equation*}
D_mF(G(u),u) \circ [D_uG(u)\cdot h] +D_uF(G(u),u)\cdot h =(0,0) \ \ \ \ \forall \ h \in L^2(0,T;H^1(\Omega)).	
\end{equation*}
Further, note that 
$ D_uF(m_u,u)\cdot h = \big(-m \times h+m \times (m \times h),0\big).$
By setting $D_uG(u)\cdot h=z$ and taking note of  $g=[m \times h-m \times (m \times h)]\in L^2(0,T;H^1(\Omega)),$ we conclude that  $D_mF(G(u),u)\cdot z =(m \times h-m \times (m \times h),0)$  if and only if $z$ is a regular solution of the linearized system \eqref{LS2} by Theorem \ref{TLS}.  Hence the proof.
\end{proof}

\begin{Rem}
	By extending the value of $R$ in the control set $\mathcal{U}_R$ and applying Proposition \ref{CTSM} on this modified set, we can conclude that the set of controls in $L^2(0,T;H^1(\Omega))$ for which there exists a regular solution in $\mathcal{M}$ forms an open set.
\end{Rem}

Next, we  study the solvability of the adjoint problem. While obtaining the first-order necessary conditions, instead of working with the strong solution of the adjoint equation, we will work with the weak one. So, a weak formulation of the adjoint problem \eqref{AP} is given below. Suppose $\langle\cdot,\cdot \rangle$ denotes the inner product between $H^1(\Omega)$ and $H^1(\Omega)^*$.
\begin{Def}\label{WFAP}
A function $\phi\in L^2(0,T;H^1(\Omega))$ with $\phi_t \in  L^2(0,T;H^1(\Omega)^*)$ is a weak solution of the adjoint system \eqref{AP} if for each $\psi\in H^1(\Omega),$ the following hold:
\begin{align*} 
& (i) \ \ \	-\langle \phi_t , \psi\rangle +(\nabla \phi, \nabla \psi) = -(\nabla(\phi \times \widetilde{m}),\nabla \psi)+(\Delta \widetilde{m}\times \phi,\psi)+(\widetilde{u}\times \phi,\psi) +2((\widetilde{m}\cdot \phi)\nabla \widetilde{m},\nabla \psi)\\\vspace{.1in} 
	&\hspace{.75in}+ (|\nabla \widetilde{m}|^2\phi,\psi)+((\phi \times \widetilde{m})\times \widetilde{u}, \psi)+(\phi \times (\widetilde{m}\times \widetilde{u}),\psi)+(\widetilde{m}-m_d,\psi)\ \ \text{for a.e.} \ t\in [0,T], \\\vspace{.1in} 
& (ii)	\ \ \	\phi(T)=\widetilde{m}(T)-m_\Omega \ \ \ \text{in} \ \ \Omega.
\end{align*}
\end{Def}

\begin{Thm}\label{TAS}
	Let $(\widetilde{m},\widetilde{u}) \in W^{1,2}(0,T;H^3(\Omega),H^1(\Omega)) \times \mathcal{U}_R$ be the regular solution pair of \eqref{NLP}. Then the adjoint system \eqref{AP} has a unique weak solution $\phi \in W^{1,2}(0,T;H^1(\Omega),H^1(\Omega)^*)$.  Moreover, the adjoint state $\phi$ satisfies the following estimation:
	\begin{eqnarray}
	\lefteqn{\|\phi\|^2_{L^2(0,T;H^1(\Omega))} +\|\phi_t\|^2_{L^2(0,T;H^1(\Omega)^*)} } \nonumber\\
	&\leq& C\Big(\|\widetilde{m}(T)-m_\Omega\|^2_{L^2(\Omega)} +\|\widetilde{m}-m_d\|^2_{L^2(0,T;L^2(\Omega))}  \Big)\label{AEEE}	\\
		&&\times\ \|\widetilde{m}\|^4_{L^\infty(0,T;H^2(\Omega))}\exp{\left\{C \int_0^T \left(\|\widetilde{m}(t)\|^2_{H^3(\Omega)}+ \|\widetilde{u}(t)\|^2_{H^1(\Omega)}\right)dt\right\}}.\nonumber
	\end{eqnarray}
\end{Thm}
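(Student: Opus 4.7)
The plan is to reformulate the backward problem \eqref{AP} as a forward parabolic equation via the time-reversal $\psi(x,t):=\phi(x,T-t)$, and then construct the solution by a Galerkin scheme analogous to the one in Theorem \ref{TLS}. Since \eqref{AP} is linear in the unknown with coefficients depending only on the already-constructed regular pair $(\widetilde m,\widetilde u)\in\mathcal M\times\mathcal U_R$, the finite-dimensional approximate system is a linear ODE on $[0,T]$ with coefficients in $L^2(0,T)$, so the usual existence theory yields $\psi_n\in C^0([0,T];W_n)$ with $(\psi_n)_t\in L^2$.

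The key a priori estimate is obtained by taking the $L^2$ inner product of the (reversed) equation with $\psi_n$. Using the identities from Lemma \ref{CPP}, the skew-symmetric terms $(\Delta\widetilde m\times\phi,\phi)$, $(\widetilde u\times\phi,\phi)$ and $(\phi\times(\widetilde m\times\widetilde u),\phi)$ vanish, while integration by parts gives
\[
\bigl(\Delta(\phi\times\widetilde m),\phi\bigr)=-\bigl(\phi\times\nabla\widetilde m,\nabla\phi\bigr), \qquad \bigl(-2\nabla\cdot\{(\widetilde m\cdot\phi)\nabla\widetilde m\},\phi\bigr)=2\bigl((\widetilde m\cdot\phi)\nabla\widetilde m,\nabla\phi\bigr),
\]
since $(\nabla\phi\times\widetilde m)\cdot\nabla\phi=0$. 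The remaining term $((\phi\times\widetilde m)\times\widetilde u,\phi)$ is estimated by H\"older. Using the embeddings $H^2(\Omega)\hookrightarrow L^\infty(\Omega)$, $H^1(\Omega)\hookrightarrow L^4(\Omega)$ together with the Gagliardo–Nirenberg inequality \eqref{ES8} to control $\|\nabla\widetilde m\|_{L^\infty}$ by $\|\nabla\widetilde m\|_{L^2}^{1/2}\|\nabla\Delta\widetilde m\|_{L^2}^{1/2}$, each dangerous term is absorbed into $\epsilon\|\nabla\phi\|_{L^2}^2$ (choosing $\epsilon$ small), leaving a coefficient of $\|\phi\|_{L^2}^2$ of the form $C(1+\|\widetilde m\|_{H^3}^2+\|\widetilde u\|_{H^1}^2)\|\widetilde m\|_{H^2}^2$ which, thanks to Theorem \ref{GLOBAL}, is in $L^1(0,T)$. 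The source $\widetilde m-m_d$ is handled by Cauchy. Gronwall's inequality applied to the reversed variable, together with the terminal datum $\phi(T)=\widetilde m(T)-m_\Omega\in H^1(\Omega)\subset L^2(\Omega)$, yields the uniform bound
\[
\sup_{t\in[0,T]}\|\phi_n(t)\|_{L^2(\Omega)}^2 + \|\nabla\phi_n\|_{L^2(0,T;L^2(\Omega))}^2 \leq \text{RHS of \eqref{AEEE}}.
\]

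With the $L^2(0,T;H^1)$ bound in hand, the estimate on $\phi_t$ in $L^2(0,T;H^1(\Omega)^*)$ follows by pairing the equation with an arbitrary $\psi\in H^1(\Omega)$, using the weak formulation in Definition \ref{WFAP}. Every term on the right is bounded by $\|\psi\|_{H^1}$ times a coefficient depending on $\|\phi\|_{H^1}$, $\|\widetilde m\|_{H^3}$ and $\|\widetilde u\|_{H^1}$; after squaring and integrating in time, the previous estimate closes the bound. Standard weak/weak-$\ast$ compactness (Alaoglu) then produces a limit $\phi$; passage to the limit in the linear weak formulation is routine since $\widetilde m,\widetilde u$ are fixed data and the Aubin–Lions lemma provides strong convergence $\phi_n\to\phi$ in $L^2(0,T;L^2(\Omega))$, enough to identify the nonlinear-looking but actually bilinear terms involving $\widetilde m$ and $\widetilde u$.

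Uniqueness is proved by testing the difference $\chi=\phi_1-\phi_2$ of two weak solutions against $\chi$ itself: $\chi$ satisfies the same equation with zero terminal condition and zero source, the same energy estimate gives $\|\chi(t)\|_{L^2}\equiv 0$. The terminal trace $\phi(T)=\widetilde m(T)-m_\Omega$ is recovered from $\phi\in W^{1,2}(0,T;H^1(\Omega),H^1(\Omega)^\ast)\hookrightarrow C([0,T];L^2(\Omega))$ in the standard way. The main obstacle, as anticipated, is the careful bookkeeping of the second-order distributional term $\Delta(\phi\times\widetilde m)$ and the divergence term $\nabla\cdot\{(\widetilde m\cdot\phi)\nabla\widetilde m\}$: both force one to work at the weak level (one derivative pushed onto the test function) and require the sharp Gagliardo–Nirenberg control of $\nabla\widetilde m$ recalled above, rather than any additional regularity of $\phi$ itself.
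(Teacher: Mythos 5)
Your proposal is correct and follows essentially the same route as the paper: a Galerkin scheme for the linear backward problem, an $L^2$ energy estimate in which the skew-symmetric cross-product terms drop out and the terms $\Delta(\phi\times\widetilde m)$ and $\nabla\cdot\{(\widetilde m\cdot\phi)\nabla\widetilde m\}$ are handled weakly by moving one derivative onto $\phi$, a dual-norm bound for $\phi_t$ obtained by pairing with $\psi\in H^1(\Omega)$, and then weak compactness, Aubin--Lions, and a Gronwall-based uniqueness argument. The only differences are cosmetic (you reverse time explicitly where the paper integrates from $t$ to $T$, and you invoke \eqref{ES8} for $\|\nabla\widetilde m\|_{L^\infty}$ where the paper uses $L^4$ embeddings and \eqref{ES4}; both yield an $L^1(0,T)$ Gronwall coefficient).
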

\begin{proof}[Proof:]
The proof follows the Galerkin method used in  Theorem \ref{TLS}.  
Recall that the eigenfunctions $\{w_j\}_{j=1}^{\infty}$ of the operator $-\Delta +I$  forms an orthonormal basis in $L^2(\Omega)$ and orthogonal basis in $H^1(\Omega)$ and $H^2(\Omega)$.
For each $n$, we want to find a solution $\phi_n=\sum_{j=1}^{n}g_{jn}(t)w_j$ of the following approximated system for $j=1,\cdots,n:$
\begin{equation}\label{APGS}
	\begin{cases}
		-\big(\phi_n'(t),w_j\big)+\big(\nabla\phi_n(t),\nabla w_j\big)=-\big(\nabla  (\phi_n(t) \times \widetilde{m}(t)),\nabla w_j\big)+\big((\Delta \widetilde{m}(t)\times \phi_n(t)),w_j\big)\\
		\hspace{0.5in}+\ \big((\widetilde{u}(t)\times  \phi_n(t)),w_j\big)+2 \big((\widetilde{m}(t)\cdot \phi_n(t))\nabla \widetilde{m}(t),\nabla w_j\big)+\big(|\nabla \widetilde{m}(t)|^2\phi_n(t),w_j\big)\\
		\hspace{0.5in}+\  \big((\phi_n(t)\times \widetilde{m}(t))\times \widetilde{u}(t),w_j\big)+\big(\phi_n(t) \times (\widetilde{m}(t)\times \widetilde{u}(t)),w_j\big)+\big(\widetilde{m}(t)-m_d(t),w_j\big), \\
		\phi_n(T)=\mathbb{P}_n\big(\widetilde{m}(T)-m_\Omega\big).
	\end{cases}
\end{equation}

The system \eqref{APGS} is equivalent to a system of linear ODEs for the functions $g_{1n},\cdots,g_{nn}$. The solvability of the ODEs and \eqref{APGS} follow from a similar argument to Theorem \ref{LOCAL}. 

Multiplying \eqref{APGS} by $g_{jn}(t)$ and summing over $j=1,...,n,$ we get
\begin{eqnarray*}
	\lefteqn{-\frac{1}{2}\frac{d}{dt}\|\phi_n(t)\|^2_{L^2(\Omega)}+ \int_\Omega |\nabla \phi_n(t)|^2 dx = -\int_\Omega \nabla(\phi_n \times \widetilde{m})\cdot \nabla\phi_n \ dx+\int_\Omega |\nabla \widetilde{m}|^2 |\phi_n|^2\ dx}\\
	&&+2 \int_\Omega \big( (\widetilde{m}\cdot \phi_n)\nabla \widetilde{m}\big)\cdot \nabla \phi_n \ dx + \int_\Omega \big((\phi_n \times \widetilde{m}) \times \widetilde{u}\big)\cdot \phi_n \ dx + \int_\Omega (\widetilde{m}-m_d)\cdot \phi_n \ dx,
\end{eqnarray*}	 
where we also employed the property $a\cdot (a \times b)=0$. Now, applying H\"older's inequality and embedding $H^1(\Omega)\hookrightarrow L^4(\Omega)$, $H^2(\Omega) \hookrightarrow L^\infty(\Omega)$ and \eqref{ES4}, we have
\begin{eqnarray*}
	\lefteqn{-\frac{1}{2}\frac{d}{dt}\|\phi_n(t)\|^2_{L^2(\Omega)}+ \frac{1}{2}\int_\Omega |\nabla \phi_n(t)|^2 dx} \\
	&&\leq   \|\widetilde{m}(t)-m_d(t)\|^2_{L^2(\Omega)}+\ C\ \left(\|\widetilde{m}(t)\|^2_{H^3(\Omega)} +\|\widetilde{u}(t)\|^2_{H^1(\Omega)}\right)  \|\phi_n(t)\|^2_{L^2(\Omega)}. \nonumber
\end{eqnarray*}
By taking integration from t to T and then applying Gronwall's inequality, followed by the inequality $\|\phi_n(T)\|_{L^2(\Omega)}\leq \|\phi(T)\|_{L^2(\Omega)}$, we derive
\begin{align}
	\|\phi_n(t)\|^2_{L^2(\Omega)}+\int_t^T\int_\Omega |\nabla \phi_n(s)|^2 dxds &\leq 2\Big(\|\widetilde{m}(T)-m_\Omega\|^2_{L^2(\Omega)} + \|\widetilde{m}-m_d\|^2_{L^2(0,T;L^2(\Omega))} \Big) \label{AEE1}\\
	&\hspace{0.1in}\times \exp{\left\{C \int_0^T \left(\|\widetilde{m}(s)\|^2_{H^3(\Omega)}+ \|\widetilde{u}(s)\|^2_{H^1(\Omega)}\right)ds\right\}}, \ \ \forall t\in [0,T).\nonumber
\end{align}
From estimate \eqref{AEE1}, it is clear that $\{\phi_n\}$ is uniformly bounded in $L^\infty(0,T;L^2(\Omega)) \cap L^2(0,T;H^1(\Omega))$.

 Next, we obtain the bound for $\{\phi_n^\prime(t)\}.$ Fix any $v\in H^1(\Omega)$ with $\|v\|_{H^1(\Omega)}\leq 1$. We can split $v$ as $v=v_1+v_2$, where $v_1\in \text{span}\{w_k\}_{k=1}^n$ and $(v_2,w_k)=0, k=1,...,n$. Since the functions $\{w_k\}_{k=1}^{\infty}$ are orthogonal in $H^1(\Omega)$, $\|v_1\|_{H^1(\Omega)}\leq \|v\|_{H^1(\Omega)} \leq 1$. Taking $w_j=v_1$ in equation \eqref{APGS} and applying  H\"older's inequality, the embeddings $H^1(\Omega)\hookrightarrow L^p(\Omega)$ for $p\in [1,\infty)$ and the fact that $|\widetilde{m}|=1$, we estimate the right-hand side terms of \eqref{APGS}:
\begin{flalign*}
	\big(\nabla (\phi_n(t)\times \widetilde{m}(t)), \nabla v_1\big)&\leq \|\widetilde{m}(t)\|_{L^\infty(\Omega)}\|\nabla \phi_n(t)\|_{L^2(\Omega)} \|\nabla v_1\|_{L^2(\Omega)}+ \|\phi_n(t)\|_{L^4(\Omega)}\|\nabla \widetilde{m}(t)\|_{L^4(\Omega)}\|\nabla v_1\|_{L^2(\Omega)}\\
	&\leq C\|\widetilde{m}(t)\|_{H^2(\Omega)}\|\phi_n(t)\|_{H^1(\Omega)}\|v_1\|_{H^1(\Omega)}, \\
		\big(|\nabla \widetilde{m}(t)|^2\phi_n(t),v_1\big)&\leq \|\nabla \widetilde{m}(t)\|^2_{L^4(\Omega)} \|\phi_n(t)\|_{L^4(\Omega)}\|v_1\|_{L^4(\Omega)}
	 \leq C\|\widetilde{m}(t)\|^2_{H^2(\Omega)}\|\phi_n(t)\|_{H^1(\Omega)}\|v_1\|_{H^1(\Omega)}.
\end{flalign*}
The same bound holds for the terms $(\Delta \widetilde{m}\times \phi_n,v_1)$ and $((\widetilde{m}\cdot \phi_n)\nabla\widetilde{m}(t),\nabla v_1).$ Further, we have the following estimates for the control terms:
\begin{flalign*}
	\big((\widetilde{u}(t)\times  \phi_n(t)),v_1\big)&\leq \|\widetilde{u}(t)\|_{L^4(\Omega)}\|\phi_n(t)\|_{L^2(\Omega)}\|v_1\|_{L^4(\Omega)}\\
	&\leq C\|\widetilde{u}(t)\|_{H^1(\Omega)}\|\phi_n(t)\|_{L^2(\Omega)}\|v_1\|_{H^1(\Omega)},\\
\big((\phi_n(t)\times \widetilde{m}(t))\times \widetilde{u}(t),v_1\big)&\leq \|\phi_n(t)\|_{L^2(\Omega)}\|\widetilde{m}(t)\|_{L^\infty(\Omega)}\|\widetilde{u}(t)\|_{L^4(\Omega)} \|v_1\|_{L^4(\Omega)}\\
	& \leq C\|\widetilde{u}(t)\|_{H^1(\Omega)}\|\phi_n(t)\|_{L^2(\Omega)}\|v_1\|_{H^1(\Omega)}.
\end{flalign*}
By combining all the above estimates, using $\langle \phi_n',v\rangle =\langle \phi_n',v_1\rangle $ and $\|v_1\|_{H^1(\Omega)}\leq 1$, we obtain from \eqref{APGS} that
$$|\langle \phi_n',v\rangle| \leq C\left(\|\widetilde{m}(t)\|^2_{H^2(\Omega)}\|\phi_n(t)\|_{H^1(\Omega)}+\|\widetilde{u}(t)\|_{H^1(\Omega)}\|\phi_n(t)\|_{L^2(\Omega)}+\|\widetilde{m}(t)-m_d(t)\|_{L^2(\Omega)}\right).$$
As the above estimate holds for every $v \in H^1(\Omega)$, we obtain upon integration on $[0,T]$ that
\begin{align}
	&\int_0^T \|\phi_n'(t)\|^2_{H^1(\Omega)^*} dt \leq C\left(\|\widetilde{m}\|^4_{L^\infty(0,T;H^2(\Omega))} \|\phi_n\|^2_{L^2(0,T;H^1(\Omega))}+ \|\widetilde{u}\|^2_{L^2(0,T;H^1(\Omega))} \|\phi_n\|^2_{L^\infty(0,T;L^2(\Omega))}\right.\nonumber\\
	&\hspace{2in}\left.+\|\widetilde{m}-m_d\|^2_{L^2(0,T;L^2(\Omega))}\right).&\label{AEE3}
\end{align}
Therefore, $\{\phi_n^\prime\}$ is uniformly bounded in $L^2(0, T;H^1(\Omega)^*)$. In view of \eqref{AEE1} and \eqref{AEE3}, appealing to Aloglu weak* compactness and reflexive weak compactness theorems, we have
 \begin{eqnarray} \left\{\begin{array}{cccll}
 		\phi_n &\overset{w}{\rightharpoonup} & \phi \  &\mbox{weakly in}& \ L^2(0,T;H^1(\Omega)),\\
 		\phi_n &\overset{w^\ast}{\rightharpoonup} & \phi \ &\mbox{weak$^*$ in}& \ L^{\infty}(0,T;L^2(\Omega)),\\
 		\phi^\prime_n &\overset{w}{\rightharpoonup} & \phi^\prime \ &\mbox{weakly in}& \ L^2(0,T;H^1(\Omega)^*), \ \ \mbox{as} \  \ n\to \infty. \label{AEWC}
 	\end{array}\right.	
 \end{eqnarray}

The Aubin-Lions-Simon lemma (see, Corollary 4, \cite{JS}) establishes the existence of a sub-sequence of $\{\phi_n\}$ (again denoted as $\{\phi_n\}$) such that $\phi_n \overset{s}{\to} \phi$ strongly in $L^2(0,T;L^2(\Omega))$. Using this strong convergence along with weak-weak* convergences from \eqref{AEWC}, we can verify  that Definition \ref{WFAP}-(i)  holds true for every $\psi \in \text{span}(w_1,w_2,...)$. Further, as such functions are dense in $H^1(\Omega)$, it  holds true for every $\psi \in H^1(\Omega)$, almost every $t\in [0,T]$.  
 
Since $\phi\in  L^2(0,T;H^1(\Omega)),\phi_t\in  L^2(0,T;H^1(\Omega)^*),$ we infer that $\phi\in  C([0,T];L^2(\Omega)).$ It is sufficient to verify the terminal condition $\phi(T)=\widetilde{m}(T)-m_\Omega$ in a standard way. Finally, combining the estimates \eqref{AEE1} and \eqref{AEE3}, and using the sequential lower semi-continuity of $\phi_n$, we obtain the estimate \eqref{AEEE}. The uniqueness of weak solutions of the linear adjoint system \eqref{AP}  follows from an estimate similar to \eqref{AEE1}.   The proof is thus completed.
\end{proof}

\subsection{Optimality Conditions}\label{SFOOC}
Let $G:\mathcal U\to \mathcal M$ be the control-to-state operator defined by $G(u)=m_u$ as in Proposition \ref{CTSM}, where $m_u\in\mathcal M$ is a regular solution of the system \eqref{NLP} associated with the control $u.$  
Define a \emph{ reduced cost functional }  $\mathcal{I}  : \mathcal{U} \to \mathbb{R}$ by $\mathcal{I}(u)=\mathcal J(G(u),u)$. The optimal control problem (OCP) can be redefined in terms of the reduced functional as follows:
\begin{equation*}\text{(MOCP)}
\left\{	\begin{array}{lclclc}
		\text{minimize} \ \ \mathcal{I}(u)	\\
		u\in \mathcal U_{ad}.
	\end{array}
\right.
\end{equation*}
Next, we prove the first-order necessary optimality condition for the modified problem (MOCP) given by the variational inequality. To characterize the optimality condition in a concise structure,  we employ the classical adjoint problem approach. 
\begin{proof}[Proof of Theorem \ref{FOOC}:]
Let $\widetilde{u} \in \mathcal{U}_{ad}$ be an optimal control of (MOCP) with associated state $\widetilde{m}.$ For any optimal solution $\widetilde{u},$ the functional $\mathcal{I}(\cdot)$ must satisfy  the following inequality:
\begin{equation}\label{LI}
	D_u\mathcal{I}(\widetilde{u})(u-\widetilde{u})= \lim_{\epsilon \to 0} \frac{\mathcal{I}(\widetilde{u} + \epsilon (u - \widetilde{u}))-\mathcal{I}(\widetilde{u})}{\epsilon} \geq 0, \ \ \ \  \ \forall \ u \in \mathcal{U}_{ad},	
\end{equation}
since the control-to-state operator $G(\cdot)$ is Fr\'echet differentiable by Proposition \ref{CTSM}, Fr\'echet differentiability of the functional $\mathcal{I(\cdot)}$ follows by the chain rule.  Setting $h=u-\widetilde{u}$, it is easy to see from the definition of $\mathcal J(\cdot,\cdot)$ that
\begin{flalign}\label{FD1}
D_u \mathcal{I} (\widetilde{u})\cdot h &= D_m\mathcal J(G(\widetilde{u}),\widetilde{u}) \circ\left[ D_uG(\widetilde{u})\cdot h\right] + D_u\mathcal  J(G(\widetilde{u}),\widetilde{u})\cdot h\nonumber\\
&=\int_{\Omega_T} \widetilde{u}\cdot h \ dx\ dt + \int_{\Omega_T} \nabla \widetilde{u} \cdot \nabla h \ dx+ \int_\Omega \left(\widetilde{m}(x,T)-m_\Omega\right)\cdot z(x,T)\ dx\nonumber\\
&\hspace{0.1in} + \int_{\Omega_T}  \big(\widetilde{m}(x,t)-m_d(x,t)\big)\cdot z \ dx \ dt,	 
\end{flalign}
where $z=D_uG(\widetilde{u})\cdot (u-\widetilde{u})\in W^{1,2}(0,T;H^3(\Omega),H^1(\Omega))$ is a unique regular solution of the linearized system \eqref{LS2} with $h=u-\widetilde{u}.$ The main idea here is to express the last two integrals of \eqref{FD1} by using the weak solutions (Theorem \ref{TAS}) of the adjoint system \eqref{AP}.  

By testing \eqref{AP} with  $z$ and doing space integration by parts yield that
\begin{align}
& -\int_0^T\big\langle\phi_t, z\big\rangle\ dt   + \int_{\Omega_T} \nabla \phi \cdot \nabla z \ dx\ dt +\int_{\Omega_T} \nabla (\phi \times \widetilde{m})\cdot \nabla z \ dx\ dt- \int_{\Omega_T} (\Delta \widetilde{m} \times \phi)\cdot z \ dx\ dt\nonumber\\
	&\hspace{0.1in}  - \int_{\Omega_T} (\widetilde{u} \times \phi)\cdot z \ dx\ dt- 2\int_{\Omega_T}   (\widetilde{m}\cdot \phi)(\nabla \widetilde{m}\cdot \nabla z) \ dx\ dt-\int_{\Omega_T} |\nabla \widetilde{m}|^2 \phi \cdot z \ dx\ dt \label{FOE1}\\
	&\hspace{0.1in}  -\int_{\Omega_T}  \big((\phi \times \widetilde{m})\times \widetilde{u}\big)\cdot z \ dx\ dt - \int_{\Omega_T} \big( \phi \times (\widetilde{m} \times \widetilde{u})\big)\cdot z  \ dx\ dt=	\int_{\Omega_T}  \big(\widetilde{m}-m_d\big)\cdot z \ dx \ dt. \nonumber
\end{align}
Since $z\in H^1(0,T;H^1(\Omega))$ and $\phi\in H^1(0,T;H^1(\Omega)^*),$ time integrating by parts leads to the identity 
\begin{eqnarray}\label{TIP}
 \int_{0}^{T} \big( z_t,\phi\big) \ dt+\int_0^T \big\langle\phi_t, z\big\rangle \ dt= \int_\Omega \left(\widetilde{m}(x,T)-m_\Omega\right)\cdot z(x,T)\ dx.
\end{eqnarray}
On the other hand, testing the linearized system \eqref{LS2} with $\phi$ and integrating by parts, we get 
\begin{eqnarray}
	\int_{0}^{T} \big( z_t,\phi\big) \ dt\! &=&\!- \int_{\Omega_T}\nabla z \cdot \nabla \phi \ dx\ dt+ 2\int_{\Omega_T}   (\widetilde{m}\cdot \phi)\big(\nabla \widetilde{m}\cdot \nabla z\big) \ dx\ dt+\int_{\Omega_T} |\nabla \widetilde{m}|^2 z \cdot \phi \ dx\ dt\nonumber\\
	&&+\int_{\Omega_T}(z \times \Delta \widetilde{m})\cdot \phi\ dx\ dt+\int_{\Omega_T}  (\widetilde{m}\times \Delta z )\cdot \phi\ dx\ dt+\int_{\Omega_T}(z \times \widetilde{u})\cdot \phi \ dx\ dt\nonumber\\
	&& - \int_{\Omega_T}\big( z \times (\widetilde{m}\times \widetilde{u})\big)\cdot \phi \ dx\ dt-\int_{\Omega_T} \big( \widetilde{m}\times (z \times \widetilde{u})\big)\cdot \phi \ dx\ dt\nonumber\\
	&&+\int_{\Omega_T}(\widetilde{m}\times h)\cdot \phi \ dx\ dt- \int_{\Omega_T}\big(\widetilde{m}\times (\widetilde{m}\times h)\big)\cdot \phi \ dx\ dt:=\sum_{i=1}^{10}J_i. \label{FOE2}
\end{eqnarray}
Notice that the following cross product identities for $J_5$ and $J_8$ hold through Lemma \ref{CPP}:
\begin{eqnarray*}
\int_{\Omega_T}  (\widetilde{m}\times \Delta z )\cdot \phi\ dx\ dt=\int_{\Omega_T}  (\phi\times\widetilde{m})\cdot \Delta z\ dx\ dt=-\int_{\Omega_T} \nabla (\phi\times\widetilde{m})\cdot\nabla z \ dx\ dt,\\
-\int_{\Omega_T} \big( \widetilde{m}\times (z \times \widetilde{u})\big)\cdot \phi \ dx\ dt = \int_{\Omega_T} ( \widetilde{m}\times\phi) \cdot (z \times \widetilde{u}) \ dx\ dt =\int_{\Omega_T}  \big((\phi \times \widetilde{m})\times \widetilde{u}\big)\cdot z \ dx\ dt.
\end{eqnarray*} 
Further, applying a similar  vector identities  for the integrals $J_4,J_6$ and $J_7$ of \eqref{FOE2} and substitute \eqref{FOE2} into \eqref{TIP}. Then substituting \eqref{FOE1} and \eqref{TIP} into \eqref{FD1}, one can notice that most of the terms  cancel and arrive at the following:
$$	D_u \mathcal{I}(\widetilde{u})\cdot h= \int_{\Omega_T} \widetilde{u}\cdot h \ dx\ dt + \int_{\Omega_T} \nabla \widetilde{u}\cdot  \ \nabla h \ dx\  dt   + \int_{\Omega_T} \Big(  \widetilde{m}\times h- \widetilde{m}\times (\widetilde{m} \times h) \Big)\cdot \phi  \ dx \ dt.$$
Using \eqref{LI}, and again applying the property $(a \times b)\cdot c=b \cdot (c \times a)$ for the last integral, we get
\begin{eqnarray*}
\lefteqn{\int_{\Omega_T} \widetilde{u}\cdot \big(u - \widetilde{u}\big)  \ dx\ dt + \int_{\Omega_T} \nabla \widetilde{u} \cdot \nabla  \big(u - \widetilde{u}\big)  \ dx\  dt }
\\&&+\int_{\Omega_T}  \Big(  (\phi \times \widetilde{m})+  \widetilde{m} \times (\phi \times \widetilde{m}) \Big)\cdot \big( u-\widetilde{u} \big)\ dx \ dt \ \geq 0, \ \ \forall \ u \in \mathcal{U}_{ad}.
\end{eqnarray*}
Hence the proof. 
\end{proof}

\end{document}